\newtheorem{thm}{Theorem}[section]
\newtheorem{cor}[thm]{Corollary}
\newtheorem{prop}[thm]{Proposition}
\newtheorem{lem}[thm]{Lemma}
\theoremstyle{definition}
\theoremstyle{remark}
\newtheorem{rem}[thm]{Remark}
\let\c@equation\c@thm
\numberwithin{equation}{section}
\begin{document}
	\title{Minimal parabolic subgroups and automorphism groups of Schubert varieties}
	
	\author[S. Senthamarai Kannan]{S. Senthamarai Kannan$^{1}$}
	\address{Chennai Mathematical Institute, Plot H1, SIPCOT IT Park, Siruseri, Kelambakkam,  603103, India.}
	\email{kannan@cmi.ac.in}
	\author[Pinakinath Saha]{Pinakinath Saha}
	\address{Tata Inst. of Fundamental Research, Homi Bhabha Road, Colaba, Mumbai 400005, India.}
	\email{psaha@math.tifr.res.in.}
	
	\begin{abstract} 	
		Let $G$ be a simple simply-laced algebraic group of adjoint type over the field $\mathbb{C}$ of complex numbers, $B$ be a Borel subgroup of $G$ containing a maximal torus $T$ of $G.$ In this article, we show that $\omega_\alpha$ is a minuscule fundamental weight  if and only if for any parabolic subgroup $Q$ containing $B$ properly, there is no Schubert variety $X_{Q}(w)$ in $G/Q$ such that the minimal parabolic subgroup $P_{\alpha}$ of $G$ is the  connected component, containing the identity automorphism of the group of all algebraic automorphisms of $X_{Q}(w).$	
	\end{abstract}

	\footnotetext[1]{Corresponding author: S. Senthamarai Kannan (Email: kannan@cmi.ac.in).}	
	
	\keywords{Minuscule weights,~Cominuscule roots,~Schubert varieties,~Automorphism groups}
	
	\subjclass[2010]{14M15;14M17.}
	
	\maketitle

	\section{Introduction}
	We recall that if $X$ is a projective variety over $\mathbb{C}$, the connected component, containing the identity automorphism of the group of all algebraic automorphisms of $X$ is an algebraic group (see \cite[Theorem 3.7, p.17]{MO}). On the other hand, M. Brion proved that every connected algebraic group $H$ over $\mathbb{C}$ is the connected component, containing identity automorphism of the group of all algebraic automorphisms of some normal projective variety $X$ (see \cite[Theorem 1]{Bri1}). Let $G$ be a simple algebraic group of adjoint type over $\mathbb{C}.$ Let $T$  be a maximal torus  of $G,$  and let $R$ be the set of roots with respect to $T.$ Let $R^{+}\subset R$ be a set of positive roots. Let $B^{+}$ be the Borel subgroup of $G$ containing $T,$ corresponding to $R^{+}$. Let $B$ be the Borel subgroup of $G$ opposite to $B^+$ determined by $T$. Let $W=N_{G}(T)/T$ denote the Weyl group of $G$ with respect to $T.$ 
	For $w \in W$, let $X(w):=\overline{BwB/B}$ denote the Schubert variety in $G/B$ corresponding to 
	$w$.
	In \cite{Dem aut}, M. Demazure studied the automorphism group of the homogeneous space $G/P,$  where $P$ is a parabolic subgroup of $G.$ The connected component, containing the identity automorphism of the group of all algebraic automorphisms of $G/P$ is $G,$ provided $(G, P)$ is not one of the following:
	\begin{enumerate}
		\item $G$ is of type $B_{n}$ and $P$ is the maximal parabolic subgroup corresponding to the simple root $\alpha_{n}.$
		
		\item $G$ is of type $C_{n}$ and $P$ is the maximal parabolic subgroup corresponding to the simple root $\alpha_{1}.$
		
		\item $G$ is of type $G_{2}$ and $P$ is the maximal parabolic subgroup corresponding to the simple root $\alpha_{1}.$
	\end{enumerate}
	
	The Lie algebra of $G$ may be identified with the Lie algebra of global vector fields $H^0(G/P, T_{G/P}).$ Let $Aut^{0}(X(w))$ denote the connected component, containing the identity automorphism of the group of all algebraic automorphisms of $X(w).$ 
	Let $\alpha_{0}$ denote the highest root of $G$ with respect to $T$ and $B^{+}.$ For the left action of $G$ on $G/B$, let $P_{w}$ denote the stabilizer of $X(w)$ in $G.$
	In \cite[p.772,Theorem 4.2(2)]{Kan}, the first named author proved that if $G$ is simply-laced and $X(w)$ is smooth, then we have $P_{w}=Aut^{0}(X(w))$ if and only if $w^{-1}(\alpha_{0})<0.$ Therefore, it is a natural question to ask whether given any parabolic subgroup $P$ of $G$ containing $B$ properly, is there a Schubert variety $X(w)$ in $G/B$ such that $P=Aut^{0}(X(w))$ ? If $P=B,$ there is no such Schubert variety in $G/B.$ In \cite{KP1}, authors gave an affirmative answer to this question. Also, authors gave some partial results for Schubert varieties in partial flag varieties of type $A_{n}.$ In this article, we study minimal parabolic subgroup $P_{i}$ for which there exists a Schubert variety $X(w_{i})$ in a partial flag variety such that $P_{i}=Aut^{0}(X(w_{i})).$ We prove the following.
	\begin{thm}\label{thm 1.1}
		Assume that $G$ is simply-laced. A fundamental weight $\omega_{\alpha}$ is  minuscule if and only if for any parabolic subgroup $Q$ containing $B$ properly, there is no Schubert variety $X_{Q}(w)$ in $G/Q$ such that $P_{\alpha}=Aut^0(X_{Q}(w))$ (see Theorem \ref{theorem 10.1}).
	\end{thm}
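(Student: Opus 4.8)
The plan is to recast the statement as a combinatorial condition on the pair $(Q,w)$ and then to prove the two implications by opposite means: an explicit construction in one direction, a uniform obstruction in the other.

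\textbf{Reduction.} Suppose $P_\alpha = Aut^{0}(X_Q(w))$ for some $Q \supsetneq B$. Then $P_\alpha$ acts faithfully on $X_Q(w)$, so the stabiliser $P_{w}^{Q}$ of $X_Q(w)$ in $G$ contains $P_\alpha$; a short root-subgroup argument --- using faithfulness of $P_\alpha$, any nonzero $T$-stable normal unipotent subgroup of $P_{w}^{Q}$ with only negative roots would drag a coroot into the kernel --- shows that $P_{w}^{Q}$ itself acts faithfully, whence $P_{w}^{Q} \hookrightarrow Aut^{0}(X_Q(w)) = P_\alpha$ and so $P_{w}^{Q} = P_\alpha$. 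In particular the left descent set of $w$ equals $\{\alpha\}$, and $\ell(w) \ge 2$: if $\ell(w) = 1$ then $w = s_\alpha$, $X_Q(w) \cong \mathbb{P}^{1}$ and $Aut^{0}(X_Q(w)) = PGL_2 \ne P_\alpha$ since $G$ has rank at least $2$. Conversely $P_\alpha \subseteq Aut^{0}(X_Q(w))$ as soon as the left descent set of $w$ is $\{\alpha\}$. Thus Theorem~\ref{thm 1.1} is equivalent to: there is a pair $(Q,w)$ with $Q \supsetneq B$, $w \in W^{Q}$, left descent set of $w$ equal to $\{\alpha\}$, $P_{w}^{Q} = P_\alpha$, $\ell(w) \ge 2$ and no algebraic vector field on $X_Q(w)$ outside $\mathrm{Lie}(P_\alpha)$, \emph{if and only if} $\omega_\alpha$ is not minuscule. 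To control the last clause I would first prove, extending \cite{Kan} from $G/B$ to $G/Q$, a description of $H^{0}(X_Q(w),T_{X_Q(w)})$ assembled from $\mathrm{Lie}(P_{w}^{Q})$ together with the direct images of the relative tangent bundles along the Bott--Samelson resolution. Since $s_\alpha w \in W^{Q}$ with $\ell(s_\alpha w) = \ell(w) - 1$, and since $X_Q(w) = P_\alpha \times^{B} X_Q(s_\alpha w)$ once $X_Q(w)$ is smooth, the condition $Aut^{0}(X_Q(w)) = P_\alpha$ unwinds to: $X_Q(w)$ smooth; rigidity of the line bundle of the $\mathbb{P}^{1}$-bundle structure attached to the leftmost reflection $s_\alpha$ (a ``$w^{-1}(\alpha_0) < 0$''-type inequality); and the absence of vector fields on $X_Q(s_\alpha w)$ outside $\mathrm{Lie}(P_{s_\alpha w}^{Q})$.

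\textbf{Non-minuscule $\Rightarrow$ existence.} If $\omega_\alpha$ is not minuscule, the coefficient of $\alpha$ in the highest root $\alpha_0$ is at least $2$, which forces $G$ to be of type $D_n$ $(n \ge 4)$, $E_6$, $E_7$ or $E_8$, hence of rank at least $4$. I would then exhibit, for a carefully chosen $Q \supsetneq B$ --- obtained by deleting from the simple roots $\alpha$ together with one or two suitable neighbours of it --- and a carefully chosen $w \in W^{Q}$ with left descent set $\{\alpha\}$ and $\ell(w) \ge 2$ (with support large enough that $Aut^{0}(X_Q(w))$ does not collapse to the automorphism group of a Schubert variety in a proper Levi), a smooth Schubert variety meeting all the conditions of the reduction; the inequality ``coefficient $\ge 2$'' is exactly what makes the relevant line bundle positive enough to be rigid, so that no extra fibrewise automorphisms arise. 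Verifying that a particular $w$ does the job is a finite computation with the explicit positive roots, carried out type by type.

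\textbf{Minuscule $\Rightarrow$ non-existence (the main difficulty).} If $\omega_\alpha$ is minuscule I must show that \emph{for every} $Q \supsetneq B$ and \emph{every} $w \in W^{Q}$ with left descent set $\{\alpha\}$, $P_{w}^{Q} = P_\alpha$ and $\ell(w) \ge 2$, one has $Aut^{0}(X_Q(w)) \supsetneq P_\alpha$. The mechanism I expect is that minuscularity of $\omega_\alpha$ --- equivalently $\langle \omega_\alpha, \gamma^{\vee} \rangle \le 1$ for every positive root $\gamma$ --- forces the line bundle governing the leftmost $\mathbb{P}^{1}$-bundle structure of a smooth such $X_Q(w)$ to have degree at most $1$ along every Schubert curve, hence to be ``too balanced'': either $X_Q(w)$ then carries extra fibrewise automorphisms (a full $PGL_2$ when the bundle is trivial, an extra unipotent part otherwise), or the base $X_Q(s_\alpha w)$ --- itself a Schubert variety sitting inside a minuscule-type flag variety --- already carries vector fields outside its stabiliser, and these lift. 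Making this precise, and treating the singular cases likewise, is the crux: one must defeat \emph{every} admissible $(Q,w)$ rather than produce one good one. I would organise the argument by the classification of minuscule fundamental weights in the simply-laced types ($A_n$: all $\omega_i$; $D_n$: $\omega_1, \omega_{n-1}, \omega_n$; $E_6$: $\omega_1, \omega_6$; $E_7$: $\omega_7$) together with the distributive-lattice structure of the poset of Schubert varieties in $G/P_{\hat\alpha}$. Combined with the reduction, this yields the theorem.
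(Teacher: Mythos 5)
There is a genuine gap, in both directions, and the proposed mechanism for the minuscule direction is not the one that actually works. For ``minuscule $\Rightarrow$ non-existence'' you propose to show that every admissible pair $(Q,w)$ produces extra vector fields, via a dichotomy based on writing a smooth $X_Q(w)$ as $P_\alpha\times^{B}X_Q(s_\alpha w)$ and measuring the degree of the line bundle of this $\mathbb{P}^1$-bundle. That structural claim is false in general (smoothness of $X_Q(w)$ together with $s_\alpha w<w$ does not make the birational map $P_\alpha\times^{B}X_Q(s_\alpha w)\to X_Q(w)$ an isomorphism), the dichotomy ``extra fibrewise automorphisms or vector fields on the base that lift'' is nowhere established, and you concede that making it precise ``is the crux.'' The paper's actual argument is short and purely combinatorial: by the generalization of Kannan's criterion (Theorem \ref{thm1}), $Aut^{0}(X_Q(w))=P_\alpha$ forces $w^{-1}\in W^{S\setminus\{\alpha\}}$ and $w^{-1}(\alpha_0)<0$, and the whole weight of minusculeness is carried by Lemma \ref{lem 2.4}: for minuscule $\omega_\alpha$ the \emph{only} element $v\in W^{S\setminus\{\alpha\}}$ with $v(\alpha_0)<0$ is $v=w_0^{S\setminus\{\alpha\}}$. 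This pins $w$ down to a single element, and for that element and any nonempty $J$ defining $Q=P_J$ one exhibits a simple reflection $s_j$, $j\neq\sigma^{-1}(\tau(\,\cdot\,))$-related to $J$, with $w^{-1}s_jw\in W_J$, so the stabilizer of $X_Q(w)$ strictly contains $P_\alpha$ --- contradiction (Theorem \ref{theorem 4.1}). Your proposal contains no analogue of Lemma \ref{lem 2.4}, which is the one place minusculeness is genuinely used.

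For ``non-minuscule $\Rightarrow$ existence'' you state that suitable $(Q,w)$ can be found and that verification ``is a finite computation,'' but you exhibit neither the parabolics nor the Weyl group elements; this is precisely the content of Sections 5--8 of the paper (explicit $w_i$ in $G/P_i$, $G/P_4$, $G/P_3$, $G/P_7$ for types $D_n$, $E_6$, $E_7$, $E_8$), and the verification occupies several pages of root computations, so it cannot be waved through. Moreover the heuristic you offer --- that the coefficient of $\alpha$ in $\alpha_0$ being at least $2$ makes a line bundle ``positive enough to be rigid'' --- is not the operative criterion; what is actually checked is (i) $w_i^{-1}(\alpha_i)<0$ and $w_i^{-1}(\alpha_j)>0$ for $j\neq i$, with $w_i(\beta)$ a \emph{non-simple} positive root for the simple root $\beta$ defining $Q$ (so that the stabilizer is exactly $P_i$ and $w_i\in W^{Q}$), and (ii) $w_i^{-1}(\alpha_0)<0$, which by Theorem \ref{thm1}(2) makes $P_{w_i}\to Aut^{0}(X_Q(w_i))$ an isomorphism. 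Note also that Theorem \ref{thm1} is proved for arbitrary (not necessarily smooth) Schubert varieties in $G/Q$, so the smoothness running through your reduction is both unavailable in general and unnecessary.
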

	The organization of the paper is as follows. In Section 2, we recall some preliminaries on algebraic groups and Lie algebras, a result from \cite{CP} and some results on cohomology of vector bundles on Schubert varieties (see \cite[p.271-272]{Dem}). Further, we recall a lemma on indecomposable modules from \cite{BKS} which will be used in computing the cohomology modules. We conclude this section by proving a generalization of the result \cite[Theorem 4.2, p.772]{Kan}. In Section 3, we prove some results on minuscule fundamental weights, co-minuscule simple roots. In Section 4, we prove some results on non minuscule fundamental weights. In Section 5, we prove that for any non minuscule fundamental weight $\omega_{i}$ in type $D,$ there exists a Schubert variety $X_{P_{i}}(w_{i})$ in $G/P_{i}$ such that $P_i$ is the connected component, containing the identity automorphism of the group of all algebraic automorphisms of $X_{P_i}(w_i)$ (for precise notation see section 2). In Section 6, we prove that for any non minuscule fundamental weight $\omega_{i}$ in type $E_6,$ there exists a Schubert variety $X_{P_{4}}(w_{i})$ in $G/P_{4}$ such that $P_i$ is the connected component, containing the identity automorphism of the group of all algebraic automorphisms of $X_{P_4}(w_i)$ (for precise notation see section 2). In Section 7, we prove that for any non minuscule fundamental weight $\omega_{i}$ in type $E_7,$ there exists a Schubert variety $X_{P_{3}}(w_{i})$ in $G/P_{3}$ such that $P_i$ is the connected component, containing the identity automorphism of the group of all algebraic automorphisms of $X_{P_3}(w_i)$ (for precise notation see section 2). In Section 8, we prove that for any fundamental weight $\omega_{i}$ in type $E_8,$ there exists a Schubert variety $X_{P_{7}}(w_{i})$ in $G/P_{7}$ such that $P_i$ is the connected component, containing the identity automorphism of the group of all algebraic automorphisms of $X_{P_7}(w_i)$ (for precise notation see section 2). In Section 9, we prove Theorem \ref{thm 1.1}.
	\section{Notation and Preliminaries}
	In this section, we set up some notation and preliminaries. We refer to \cite{BK}, \cite{Hum1}, \cite{Hum2}, \cite{Jan} for preliminaries in algebraic groups and Lie algebras.

	Let $G,B,T, R, R^{+},$ and $W,$ be as in the introduction.    
	Let $S = \{\alpha_1,\ldots,\alpha_n\}$ denote the set of simple roots in $R^{+}.$ Every $\beta \in R$ can be expressed uniquely as  $\sum\limits_{i=1}^{n}k_{i}\alpha_{i}$ with integral coefficients $k_{i}$ all non-negative sign or non-positive. This allows us to define the {\bf height} of a root (relative to $S$) by ht$(\beta)=\sum\limits_{i=1}^{n}k_{i}.$ For $\beta=\sum\limits_{i=1}^{n}k_{i}\alpha_{i} \in R,$ we define {\bf support} of $\beta$ to be the set $\{\alpha_{i}: k_{i}\neq 0 \}.$
	The simple reflection in  $W$ corresponding to $\alpha_i$ is denoted
	by $s_{i}$. Then $(W, S)$ is a Coxeter group (see \cite[Theorem 29.4, p.180]{Hum2}). There is a natural length function $\ell$ defined on $W.$ Let $\mathfrak{g}$ be the Lie algebra of $G$. 
	Let $\mathfrak{h}\subset \mathfrak{g}$ be the Lie algebra of $T$ and  $\mathfrak{b}\subset \mathfrak{g}$ be the Lie algebra of $B$. Let $X(T)$ denote the group of all characters of $T$. 
	We have $X(T)\otimes\mathbb{R}=Hom_{\mathbb{R}}(\mathfrak{h}_{\mathbb{R}}, \mathbb{R})$, the dual of the real form of $\mathfrak{h}$. The positive definite 
	$W$-invariant form on $Hom_{\mathbb{R}}(\mathfrak{h}_{\mathbb{R}}, \mathbb{R})$ 
	induced by the Killing form of $\mathfrak{g}$ is denoted by $(~,~)$. 
	We use the notation $\left< ~,~ \right>$ to
	denote $\langle \mu, \alpha \rangle  = \frac{2(\mu,
		\alpha)}{(\alpha,\alpha)}$,  for every  $\mu\in X(T)\otimes \mathbb{R}$ and $\alpha\in R$. For a subset $J$ of $S,$ we denote by $W_{J}$ the subgroup of $W$ generated by $\{s_{\alpha}:\alpha \in J\}$. Let $W^{J}:=\{w\in W: w(\alpha)\in R^{+}~ for ~ all ~ \alpha \in J\}.$ We denote by $w_{0}$ the longest element of $W.$ Note that for $J\subseteq S,$ and $w\in W,$ there are unique elements $w_{J}\in W_{J}$ and $w^{J}$ in $W^{J}$ such that $w=w^{J}w_{J}.$ We denote by $w_{0,J}$ the longest element of $W_{J}.$ Note that $w_{0,J}=(w_{0})_{J}.$ Further, we denote by $w_{0}^{J}$ the minimal representative in $W^{J}$ of $w_{0}.$ For $w\in W,$ let $R^{+}(w):=\{\beta \in R^{+}: w(\beta)<0\}.$ For a root $\alpha,$ let $U_{\alpha}$ be the root subgroup of $G$ corresponding $\alpha.$ 
	
	For each $w\in W_{J},$ choose a representative element $n_{w}\in N_{G}(T).$ Let $N_{J}:=\{n_{w}: w\in W_{J}\}.$  Let $P_{J}:=BN_{J}B.$
	For a simple root $\alpha_{i}$, we denote by $P_{i}$ the minimal parabolic subgroup $P_{\alpha_{i}}$ of $G.$  Let  $\{\omega_{i}:1\le i\le n\}$ be the set of fundamental dominant weights corresponding to $\{\alpha_{i}: 1\le i\le n\}.$ For $1\le i \le n,$ let $h(\alpha_{i})\in \mathfrak{h}$ be the fundamental co-weight corresponding to $\alpha_{i}.$ That is ; $\alpha_{i}(h(\alpha_{j})) = \delta_{ij},$ where $\delta_{ij}$ is Kronecker delta.

	We recall the following definition and facts (see \cite[p. 119-120]{BL}):
	
	A fundamental weight $\omega$ is said to be  {\bf minuscule} if $\omega$ satisfies $\langle\omega, \beta \rangle \le 1$ for all $\beta \in R^{+}.$ The following is the complete list of minuscule weights in the simply-laced root systems.
	
	\begin{center}
		\begin{tabular}{ |p{1.3cm}|p{3.3cm}|p{3.5cm}| }
			\hline
			\multicolumn{3}{|c|}{ Table 1: Minuscule weight in simply-laced root system} \\
			\hline
			no.&Root System& Minuscule weight \\
			\hline
			$1.$ &	$A_{n}$  & $\omega_{1}, \omega_{2}, ..., \omega_{n}$      \\
			
			2. & $D_{n} $  & $\omega_{1}, \omega_{n-1}, \omega_{n}$       \\
			3.&	$E_{6}$  & $\omega_{1},\omega_{6}$       \\
			4.&	$E_{7}$  &    $\omega_{7}$    \\
			5.& $E_{8}$  & none      \\
			\hline
		\end{tabular}
	\end{center}
	A simple root $\alpha$ is said to be  {\bf co-minuscule} if $\alpha$ occurs with coefficient $1$ in the expression of the highest root $\alpha_{0}.$ A fundamental weight $\omega_{\alpha}$ associated to a simple root $\alpha$ is said to be {\bf co-minuscule} if $\alpha$ is {\bf co-minuscule}.

	Since $G$ is simply-laced, the list of minuscule weights is also the list of co-minuscule weights (see Lemma \ref{lem 3.2}). 
	
	We recall the following Proposition from \cite[Proposition 7.1, page 342-343]{CP}.
	
	Let $\alpha_{0}=\sum\limits_{i=1}^{n}c_{i}\alpha_{i},$ and $\check{\alpha_{0}}=\sum\limits_{i=1}^{n}\check{c_{i}}\check{\alpha_{i}}.$ We have $\check{\alpha_{0}}=\frac{2\alpha_{0}}{(\alpha_{0}, \alpha_{0})}=\frac{2}{(\alpha_{0}, \alpha_{0})}\sum\limits_{i=1}^{n}c_{i}\frac{(\alpha_{i}, \alpha_{i})}{2}\check{\alpha_{i}},$ hence $\check{c_{i}}=\frac{(\alpha_{i}, \alpha_{i})}{(\alpha_{0}, \alpha_{0})}c_{i}.$ If $G$ is simply-laced, we have $c_{i}=\check{c_{i}}.$ The dual Coxeter number of $\mathfrak{g}$ is 
	$$g=1+\sum\limits_{i=1}^{n}\check{c_{i}}.$$ 
	\begin{prop}\label{Prop 2.1}
		Let $\alpha$ be any long root. Then we have 
		\begin{itemize}
			\item[(1)] There is a unique element $u_{\alpha}$ in $W$ of minimal length such that $u_{\alpha}^{-1}(\alpha_{0})=\alpha.$
		\end{itemize}
		
		\begin{itemize}
			\item[(2)] If $\alpha$ is in $S,$ then $\ell(u_{\alpha})=g-2.$
		\end{itemize}
	\end{prop}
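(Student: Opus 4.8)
The plan is to prove Proposition~\ref{Prop 2.1} by a direct combinatorial argument on the root system, exploiting the transitivity of $W$ on roots of a given length together with the standard theory of minimal-length coset representatives. For part~(1), recall that $\alpha_0$ is the highest root, hence a long root, and $W$ acts transitively on the set of long roots; thus the set $\{u\in W : u^{-1}(\alpha_0)=\alpha\}$ is a nonempty left coset of $\mathrm{Stab}_W(\alpha_0)$. The key observation is that $\mathrm{Stab}_W(\alpha_0)$ is the parabolic subgroup $W_{J_0}$ where $J_0=\{\alpha_i\in S : (\alpha_i,\alpha_0)=0\}$ is the set of simple roots orthogonal to $\alpha_0$ — this is because $\alpha_0$ is dominant, so its stabilizer is generated by the simple reflections fixing it, and $s_i(\alpha_0)=\alpha_0$ exactly when $\langle\alpha_0,\alpha_i\rangle=0$. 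Then I would invoke the general fact that every left coset $wW_{J_0}$ contains a unique element of minimal length; calling it $u_\alpha$ gives part~(1). I should be slightly careful about the direction: I want $u^{-1}(\alpha_0)=\alpha$, i.e. $u(\alpha)=\alpha_0$, so the relevant coset is $\{u : u^{-1}\in \mathrm{Stab}(\alpha_0)\,v\}$ for a fixed $v$ with $v(\alpha)=\alpha_0$; minimal length of $u$ corresponds to minimal length of $u^{-1}$, and $\mathrm{Stab}(\alpha_0)v$ is a right coset, which again has a unique minimal-length element.

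For part~(2), the claim is the length formula $\ell(u_\alpha)=g-2$ when $\alpha\in S$, where $g=1+\sum_i \check c_i$ is the dual Coxeter number. The plan is to compute $\ell(u_\alpha)$ via the standard description $\ell(u_\alpha)=|R^+(u_\alpha)|=|\{\beta\in R^+ : u_\alpha(\beta)<0\}|$, or equivalently via the inversion set of $u_\alpha^{-1}$. Since $u_\alpha^{-1}(\alpha_0)=\alpha$ and $u_\alpha$ has minimal length with this property, $u_\alpha^{-1}$ is the minimal-length element carrying $\alpha_0$ to $\alpha$; one knows that for the minimal-length $w$ with $w(\alpha_0)=\alpha$ (a simple root), $\ell(w)$ equals the number of positive roots $\gamma$ with $(\gamma,\check\alpha_0)>0$ that get moved, and a cleaner route is: $\ell(u_\alpha^{-1})$ counts the positive roots sent to negative roots. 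I would instead use the well-known identity that the element $u$ of minimal length with $u(\alpha_0)=\alpha_i$ satisfies $\ell(u) = \langle \rho^\vee, \alpha_0 - \alpha_i\rangle$ type formula — more precisely, writing things in terms of the height function on the coroot side, $\mathrm{ht}(\check\alpha_0) = \sum_i \check c_i = g-1$, and the minimal chain from $\alpha_0$ down to a simple root in the (dual) root poset has length $\mathrm{ht}(\check\alpha_0)-\mathrm{ht}(\check\alpha_i) = (g-1)-1 = g-2$. The honest version: consider the sequence of reflections realizing $u_\alpha^{-1}$; each simple reflection $s_j$ applied to a positive long root either fixes it or changes its height by a controlled amount, and by choosing a reduced path through the root poset one gets exactly $g-2$ steps. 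This is essentially Kostant's count, and I would cite \cite{CP} for the precise bookkeeping rather than reconstruct it.

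The main obstacle I anticipate is part~(2): establishing that the minimal length is \emph{exactly} $g-2$ rather than merely bounded by it. The inequality $\ell(u_\alpha)\le g-2$ is easy from exhibiting one explicit word (walk down the root poset from $\alpha_0$ to $\alpha$, $g-2$ steps since $\mathrm{ht}(\alpha_0)$ for the coroot poset is $g-1$ and $\alpha$ is at height $1$), but the reverse inequality requires knowing that no shorter element works, i.e. that any $w$ with $w(\alpha)=\alpha_0$ has $\ell(w)\ge g-2$. This should follow because $w(\alpha)=\alpha_0$ being the highest root forces $w^{-1}$ to invert all positive roots $\beta$ with $\langle\alpha_0,\beta^\vee\rangle$ ... — concretely, $\ell(w)\ge \ell(w) - \ell(w s_\alpha) $ considerations combined with the fact that $s_{\alpha_0} = (\text{long word in the }s_i)$ of length related to $g$. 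A slick argument: $u_\alpha s_\alpha u_\alpha^{-1} = s_{\alpha_0}$, and one has the known length $\ell(s_{\alpha_0})=2g-3$; combining $\ell(s_{\alpha_0}) \le \ell(u_\alpha) + \ell(s_\alpha) + \ell(u_\alpha^{-1}) = 2\ell(u_\alpha)+1$ gives $\ell(u_\alpha)\ge g-2$, and the matching upper bound finishes it. So the real content is the value $\ell(s_{\alpha_0})=2g-3$ together with the length-additivity $\ell(u_\alpha s_\alpha)=\ell(u_\alpha)+1$ (which holds because $u_\alpha\in W^{\{\alpha\}}$ — it has minimal length in its coset and one checks $u_\alpha(\alpha)>0$). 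I would organize the proof around those two facts.
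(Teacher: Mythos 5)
Your proposal is correct in substance but follows a genuinely different route from the paper: the paper offers no argument at all for Proposition \ref{Prop 2.1} and simply cites \cite[Proposition 7.1]{CP}, whereas you reconstruct a proof. Part (1) as you give it is complete: $\alpha_0$ is dominant, so $\mathrm{Stab}_W(\alpha_0)=W_{J_0}$ with $J_0=\{\alpha_i\in S:(\alpha_i,\alpha_0)=0\}$; the solution set $\{u\in W: u^{-1}(\alpha_0)=\alpha\}$ is the right coset $W_{J_0}v$ for any fixed solution $v$, and such a coset has a unique element of minimal length --- you correctly flag and resolve the left/right issue. For part (2), your lower bound via $u_\alpha s_\alpha u_\alpha^{-1}=s_{\alpha_0}$ and $\ell(s_{\alpha_0})=2g-3$ is clean and correct ($2g-3\le 2\ell(u_\alpha)+1$ gives $\ell(u_\alpha)\ge g-2$), but note that the input $\ell(s_{\alpha_0})=2g-3$ is itself equivalent to the count $\#\{\gamma\in R^{+}:\langle\gamma,\check{\alpha}_{0}\rangle=1\}=2g-4$, i.e.\ to $\dim\mathfrak{g}_{1}=2(g-2)$ in the $5$-grading of $\mathfrak{g}$ determined by $\alpha_0$, so you are trading one nontrivial known fact for another and should cite or prove it. The upper bound is where your sketch still leans on \cite{CP}: to exhibit an element of length $g-2$ it is cleaner to walk \emph{up} from the long simple root $\alpha$ rather than down from $\alpha_0$ (since $\alpha_0$ is the unique dominant long root, any long root $\beta\ne\alpha_0$ admits a simple $\alpha_i$ with $\langle\beta,\check{\alpha}_{i}\rangle<0$, whence $s_i(\beta)>\beta$), and the count of exactly $g-2$ steps uses that each such step changes the height of the coroot $\check{\beta}$ by exactly one --- because $\beta$ stays long, $\langle\alpha_i,\check{\beta}\rangle\in\{0,\pm1\}$ for $\alpha_i\ne\pm\beta$ --- together with $\mathrm{ht}(\check{\alpha}_{0})=\sum_{i}\check{c}_{i}=g-1$. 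With those two points made explicit your argument is self-contained and actually proves the statement; what the paper's approach buys is brevity and a pointer to \cite{CP}, where the same bookkeeping is carried out once for all long roots.
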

	\begin{proof}
		See \cite[Proposition 7.1, page 342-343]{CP}.
	\end{proof}
	\begin{cor}\label{corollary 2.1}
		If $G$ is simply-laced and $\alpha$ is in $S,$ then we have $\ell(v_{\alpha})=ht(\alpha_{0})$ where $v_{\alpha}=u_{\alpha}s_{\alpha}.$
	\end{cor}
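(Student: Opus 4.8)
The plan is to reduce everything to the length formula of Proposition \ref{Prop 2.1}(2) together with the elementary fact about how length changes under right multiplication by a simple reflection.

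First I would record the effect of the simply-laced hypothesis on the dual Coxeter number. Since all roots of $G$ have the same length, $(\alpha_i,\alpha_i)=(\alpha_0,\alpha_0)$ for every $i$, so the relation $\check{c_i}=\tfrac{(\alpha_i,\alpha_i)}{(\alpha_0,\alpha_0)}c_i$ recalled just before Proposition \ref{Prop 2.1} gives $\check{c_i}=c_i$ for all $i$. Hence
\[
g \;=\; 1+\sum_{i=1}^{n}\check{c_i} \;=\; 1+\sum_{i=1}^{n}c_i \;=\; 1+\operatorname{ht}(\alpha_0).
\]
Combining this with Proposition \ref{Prop 2.1}(2), which applies because $\alpha\in S$ is a long root in the simply-laced setting, we obtain $\ell(u_\alpha)=g-2=\operatorname{ht}(\alpha_0)-1$.

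Next I would determine whether passing from $u_\alpha$ to $v_\alpha=u_\alpha s_\alpha$ raises or lowers the length. By the standard criterion, $\ell(u_\alpha s_\alpha)=\ell(u_\alpha)+1$ precisely when $u_\alpha(\alpha)\in R^{+}$, and $=\ell(u_\alpha)-1$ otherwise. But by the defining property of $u_\alpha$ in Proposition \ref{Prop 2.1}(1) we have $u_\alpha^{-1}(\alpha_0)=\alpha$, equivalently $u_\alpha(\alpha)=\alpha_0$, which is a positive root. Therefore $\ell(v_\alpha)=\ell(u_\alpha s_\alpha)=\ell(u_\alpha)+1$.

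Putting the two computations together yields $\ell(v_\alpha)=\ell(u_\alpha)+1=\big(\operatorname{ht}(\alpha_0)-1\big)+1=\operatorname{ht}(\alpha_0)$, as claimed. There is no real obstacle here: the only inputs are the simply-laced identity $\check{c_i}=c_i$, Proposition \ref{Prop 2.1}, and the length criterion for right multiplication by $s_\alpha$; the mild point to be careful about is simply confirming that $u_\alpha(\alpha)=\alpha_0$ is positive so that the length genuinely increases rather than decreases.
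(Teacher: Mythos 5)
Your proof is correct and follows essentially the same route as the paper: apply Proposition \ref{Prop 2.1}(2) to get $\ell(u_\alpha)=g-2$, observe $\ell(v_\alpha)=\ell(u_\alpha)+1=g-1=\sum_i \check{c_i}$, and use the simply-laced identity $\check{c_i}=c_i$ to identify this with $\operatorname{ht}(\alpha_0)$. The only difference is that you explicitly justify the step $\ell(u_\alpha s_\alpha)=\ell(u_\alpha)+1$ via $u_\alpha(\alpha)=\alpha_0\in R^{+}$, which the paper merely asserts; this is a welcome addition rather than a divergence.
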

	\begin{proof}
		By Proposition \ref{Prop 2.1}, we have $\ell(u_{\alpha})=g-2.$ Further, we note that $\ell(v_{\alpha})=\ell(u_{\alpha})+1=g-1=\sum\limits_{i=1}^{n}\check{c_{i}}.$ If $G$ is simply-laced, we have $(\alpha_{0}, \alpha_{0})=(\alpha_{i}, \alpha_{i})$ for all $1\le i\le n.$ Therefore, $\ell(u_{\alpha})=\sum\limits_{i=1}^{n}c_{i}$=ht$(\alpha_{0}),$ as $\check{c_{i}}=\frac{(\alpha_{i}, \alpha_{i})}{(\alpha_{0}, \alpha_{0})}c_{i}=c_{i}.$ 
	\end{proof}
	
	Now we discuss some preliminaries on the cohomology of vector bundles on Schubert varieties associated to the rational $B$-modules.
	
	Let $V$ be a rational $B$-module. Let $\phi:B\longrightarrow GL(V)$ be the corresponding homomorphism of algebraic groups. The total space of the vector bundle  $\mathcal{L}(V)$  on $G/B$ is defined by the set of equivalence classes 
	$\mathcal{L}(V)= G \times_{B} V$ corresponding to the following equivalence relation on $G\times V$:
	\begin{center}
		$(g,v)\sim (gb,\phi(b^{-1})\cdot v)$ for $g\in G, b\in B, v\in V.$ 
	\end{center}
	
	We denote by the restriction of $\mathcal{L}(V)$ to $X(w)$ also by $\mathcal{L}(V)$. We denote the cohomology modules $H^i(X(w), \mathcal{L}(V))$ by $H^i(w, V)$ ($i\in \mathbb{Z}_{\ge 0}$). If $V=\mathbb{C}_{\lambda}$ is a one dimensional representation $\lambda: B\longrightarrow \mathbb{C}^{\times}$ of $B,$ then we denote $H^i(w, V)$ by $H^i(w, \lambda).$
	
	Let $L_{\alpha}$ denote the Levi subgroup of $P_{\alpha}$
	containing $T$. Note that $L_{\alpha}$ is the product of $T$ and the homomorphic image 
	$G_{\alpha}$ of $SL(2, \mathbb{C})$ via a homomorphism $\psi:SL(2, \mathbb{C})\longrightarrow L_{\alpha}$ (see  [7, II, 1.3]). We denote the intersection of $L_{\alpha}$ and $B$ by $B_{\alpha}$.  
	We note that the morphism $L_{\alpha}/B_{\alpha}\hookrightarrow P_{\alpha}/B$  induced by the inclusion $L_{\alpha}\hookrightarrow P_{\alpha}$ is an isomorphism. Therefore,  to compute the cohomology modules $H^{i}(P_{\alpha}/B, \mathcal{L}(V))$ ($0\leq i \leq 1$) for any $B$-module 
	$V,$ we treat $V$ as a $B_{\alpha}$-module  and we compute 
	$H^{i}(L_{\alpha}/B_{\alpha}, \mathcal{L}(V))$.

	For $\lambda \in X(T)$ and $w\in W,$ we define the dot action by the rule $w\cdot \lambda\,=\, w(\lambda + \rho)-\rho,$ where $\rho$ is the half sum of positive roots of $G.$ 
	
	The following lemma is due to Demazure (see \cite[p.271-272]{Dem}). We use this lemma to compute cohomology modules.
	
	\begin{lem}\label{lem 2.1}
		Let $w=\tau s_{\alpha},$ $\ell(w)=\ell(\tau) + 1,$ and $\lambda$ be a character of $B.$ Then we have 
		\begin{enumerate}
			
			\item [(1)] If $\langle \lambda , \alpha \rangle\ge0,$ then $H^{j}(w, \lambda)=H^{j}(\tau, H^0(s_{\alpha}, \lambda))$ for all $j\ge 0.$
			
			\item[(2)] If $\langle \lambda , \alpha \rangle\ge 0,$ then $H^j(w, \lambda)=H^{j+1}(w, s_{\alpha}\cdot \lambda)$ for all $j\ge0.$ 
			
			\item[(3)] If $\langle \lambda, \alpha\rangle\le -2 ,$ then $H^{j+1}(w, \lambda)=H^j(w, s_{\alpha}\cdot \lambda)$ for all $j\ge 0.$
			
			\item[(4)] If $\langle \lambda , \alpha \rangle=-1,$ then $H^j(w, \lambda)$ vanishes for every $j\ge0.$
		\end{enumerate}
	\end{lem}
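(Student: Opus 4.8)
\emph{Proof idea.} The plan is to realize $X(w)$ as a $\mathbb{P}^1$-bundle over $X(\tau)$ and to push $\mathcal{L}(\lambda)$ down along the fibers, reducing every assertion to the elementary cohomology of line bundles on $\mathbb{P}^1\cong L_\alpha/B_\alpha$. Let $\pi_\alpha\colon G/B\to G/P_\alpha$ be the canonical projection; it is a $\mathbb{P}^1$-bundle with fiber $P_\alpha/B\cong L_\alpha/B_\alpha$. Since $w=\tau s_\alpha$ and $\ell(w)=\ell(\tau)+1$, we have $\ell(ws_\alpha)=\ell(\tau)<\ell(w)$, so standard Schubert geometry (see \cite{BK}, \cite{Jan}) gives $X(w)=\pi_\alpha^{-1}(Z)$ with $Z:=\pi_\alpha(X(w))=\pi_\alpha(X(\tau))$, that $\pi_\alpha|_{X(\tau)}\colon X(\tau)\to Z$ is an isomorphism, and that $\pi:=\pi_\alpha|_{X(w)}\colon X(w)\to Z$ is again a $\mathbb{P}^1$-bundle. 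I would then compute $H^\bullet(w,\lambda)$ through the Leray spectral sequence $E_2^{p,q}=H^p(Z,R^q\pi_*\mathcal{L}(\lambda))\Rightarrow H^{p+q}(X(w),\mathcal{L}(\lambda))$.

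The next step is to identify the higher direct images. On each fiber $\mathcal{L}(\lambda)$ restricts to the line bundle of degree $\langle\lambda,\alpha\rangle$, so by cohomology and base change together with $G$-equivariance, $R^q\pi_*\mathcal{L}(\lambda)$ is the $G$-equivariant bundle on $G/P_\alpha$ associated to the $P_\alpha$-module $H^q(s_\alpha,\lambda)=H^q(L_\alpha/B_\alpha,\mathcal{L}(\lambda))$; restricting to $Z$ and transporting along $Z\cong X(\tau)$ yields $\mathcal{L}\big(H^q(s_\alpha,\lambda)\big)$ on $X(\tau)$, the module being read as a $B$-module. As $\pi$ is a $\mathbb{P}^1$-bundle only $q=0,1$ occur, so the spectral sequence has two rows and degenerates into short exact sequences. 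The three facts I need from the $\mathbb{P}^1$/$SL_2$-computation are: if $\langle\lambda,\alpha\rangle\ge 0$ then $H^1(s_\alpha,\lambda)=0$; if $\langle\lambda,\alpha\rangle=-1$ then $H^0(s_\alpha,\lambda)=H^1(s_\alpha,\lambda)=0$; and if $\langle\lambda,\alpha\rangle\le -2$ then $H^0(s_\alpha,\lambda)=0$, while Serre duality on the fiber furnishes an isomorphism $H^1(s_\alpha,\lambda)\cong H^0(s_\alpha,s_\alpha\cdot\lambda)$ of $L_\alpha$-modules, where $s_\alpha\cdot\lambda=\lambda-(\langle\lambda,\alpha\rangle+1)\alpha$.

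The four statements then fall out by case analysis. For $(1)$, the hypothesis $\langle\lambda,\alpha\rangle\ge 0$ kills the $q=1$ row, so $H^j(w,\lambda)\cong H^j\big(Z,\mathcal{L}(H^0(s_\alpha,\lambda))\big)=H^j(\tau,H^0(s_\alpha,\lambda))$. For $(4)$, $\langle\lambda,\alpha\rangle=-1$ kills both rows, whence $H^j(w,\lambda)=0$ for all $j$. For $(3)$, when $\langle\lambda,\alpha\rangle\le -2$ the $q=0$ row vanishes, giving $H^{j+1}(w,\lambda)\cong H^j(\tau,H^1(s_\alpha,\lambda))$; applying the Serre-duality isomorphism and then $(1)$ to $s_\alpha\cdot\lambda$ (which satisfies $\langle s_\alpha\cdot\lambda,\alpha\rangle=-\langle\lambda,\alpha\rangle-2\ge 0$) rewrites the right-hand side as $H^j(\tau,H^0(s_\alpha,s_\alpha\cdot\lambda))=H^j(w,s_\alpha\cdot\lambda)$. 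Finally $(2)$ follows from $(3)$ applied to $\mu:=s_\alpha\cdot\lambda$, which has $\langle\mu,\alpha\rangle\le -2$: since the dot action of $s_\alpha$ is an involution we have $s_\alpha\cdot\mu=\lambda$, and $(3)$ gives $H^{j+1}(w,\mu)=H^j(w,s_\alpha\cdot\mu)=H^j(w,\lambda)$, i.e.\ $H^{j+1}(w,s_\alpha\cdot\lambda)=H^j(w,\lambda)$, as required.

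The genuine content, beyond the formal bookkeeping with a two-row spectral sequence and the dot-action involution, lies in two places. The first is the identification of $R^q\pi_*\mathcal{L}(\lambda)$ with the associated bundle $\mathcal{L}(H^q(s_\alpha,\lambda))$ on $X(\tau)$, which requires constancy of fiber dimension, cohomology and base change, and careful tracking of the $P_\alpha$-equivariant structure. The second, which I expect to be the main obstacle, is the $SL_2$ Serre-duality isomorphism $H^1(s_\alpha,\lambda)\cong H^0(s_\alpha,s_\alpha\cdot\lambda)$ of $L_\alpha$-modules: this is the single step that forces the precise $\rho$-shifted dot action upon us and that dictates the degree shift appearing in $(2)$ and $(3)$.
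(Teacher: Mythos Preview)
The paper does not prove this lemma at all: it simply records it as a result of Demazure with a citation to \cite[p.271--272]{Dem}. Your sketch is essentially the standard proof, and the overall architecture---realize $X(w)$ as a $\mathbb{P}^1$-bundle over $Z=\pi_\alpha(X(w))$, identify $R^q\pi_*\mathcal{L}(\lambda)$ with the associated bundle of $H^q(s_\alpha,\lambda)$, and let the two-row Leray spectral sequence degenerate---is correct, as is the derivation of (2) from (3) via the dot-action involution.

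There is one inaccuracy worth flagging. You assert that $\pi_\alpha|_{X(\tau)}\colon X(\tau)\to Z$ is an \emph{isomorphism}, and then transport sheaves along it. In general this is false: take $G=SL_3$, $\alpha=\alpha_2$, $\tau=s_2s_1\in W^{\{\alpha_2\}}$; then $s_2\le\tau$ lies in $X(\tau)$, so the fiber of $\pi_{\alpha_2}|_{X(\tau)}$ over the base point $eP_{\alpha_2}$ is the full $\mathbb{P}^1$, not a single point. What is true, and what you actually need, is that $\pi_\alpha|_{X(\tau)}\colon X(\tau)\to Z$ is a proper birational surjection with $(\pi_\alpha)_*\mathcal{O}_{X(\tau)}=\mathcal{O}_Z$ and $R^i(\pi_\alpha)_*\mathcal{O}_{X(\tau)}=0$ for $i>0$; this is exactly \cite[Theorem~3.3.4(a)]{BK}, which the paper itself invokes elsewhere. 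From this one gets $H^j(Z,\mathcal{L}_{P_\alpha}(V))\cong H^j(X(\tau),\mathcal{L}(V))=H^j(\tau,V)$ for any $P_\alpha$-module $V$ via the projection formula and the degenerate Leray spectral sequence for $\pi_\alpha|_{X(\tau)}$. With this correction the rest of your argument goes through unchanged.
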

	
	Let $\pi:\hat{G}\longrightarrow G$ be the simply connected covering of $G$.  
	Let  $\hat{L_{\alpha}}$  (respectively, $\hat{B_{\alpha}}$)  be the inverse image 
	of $L_{\alpha}$ (respectively, of $B_{\alpha}$) in $\hat{G}$. Note that $\hat{L_{\alpha}}/\hat{B_{\alpha}}$ is isomorphic to $L_{\alpha}/B_{\alpha}$. We make use of this isomorphism to use the same notation for the vector bundle on $L_{\alpha}/B_{\alpha}$ associated to a $\hat{B_{\alpha}}$-module. Let $V$ be an irreducible  $\hat{L_{\alpha}}$-module and $\lambda$ be a character of $\hat{B_{\alpha}}.$
	
	Then, we have
	
	\begin{lem}\label{lem2.3}
		\begin{enumerate}
			
			\item If $\langle \lambda , \alpha \rangle \geq 0$, then the $\hat{L_{\alpha}}$-module
			$H^{0}(L_{\alpha}/B_{\alpha} , V\otimes \mathbb{C}_{\lambda})$ 
			is isomorphic to the tensor product of  $ ~ V$ and 
			$H^{0}(L_{\alpha}/B_{\alpha} , \mathbb{C}_{\lambda})$. Further, we have  
			$H^{j}(L_{\alpha}/B_{\alpha} , V\otimes \mathbb{C}_{\lambda}) =0$ 
			for every $j\geq 1$.
			\item If $\langle \lambda , \alpha \rangle  \leq -2$, then we have  
			$H^{0}(L_{\alpha}/B_{\alpha} , V \otimes \mathbb{C}_{\lambda})=0.$ 
			Further, the $\hat{L_{\alpha}}$-module  $H^{1}(L_{\alpha}/B_{\alpha} , V \otimes \mathbb{C}_{\lambda})$ is isomorphic to the tensor product of  $~V$ and $H^{0}(L_{\alpha}/B_{\alpha} , 
			\mathbb{C}_{s_{\alpha}\cdot\lambda})$.
			\item If $\langle \lambda , \alpha \rangle = -1$, then 
			$H^{j}( L_{\alpha}/B_{\alpha} , V \otimes \mathbb{C}_{\lambda}) =0$ 
			for every $j\geq 0$.
		\end{enumerate}
	\end{lem}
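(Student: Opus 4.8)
The plan is to reduce every cohomology module appearing in the statement to the cohomology of a line bundle on $\mathbb{P}^{1}$, and then to read off the answer from Lemma~\ref{lem 2.1} (which, for $w=s_{\alpha}$, amounts to the Borel--Weil--Bott theorem for $SL(2,\mathbb{C})$). First I would record the geometric input: since $\hat{L_{\alpha}}$ is a connected reductive group of semisimple rank one with Borel subgroup $\hat{B_{\alpha}}$, the quotient $\hat{L_{\alpha}}/\hat{B_{\alpha}}$ is isomorphic to $\mathbb{P}^{1}$, and under the isomorphisms $\hat{L_{\alpha}}/\hat{B_{\alpha}}\cong L_{\alpha}/B_{\alpha}\cong P_{\alpha}/B$ recalled above all the modules in the lemma are computed on $\mathbb{P}^{1}$.

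The key step, and the only place where the hypothesis that $V$ is an $\hat{L_{\alpha}}$-module (not merely a $\hat{B_{\alpha}}$-module) is used, is the tensor identity
\[
H^{j}\bigl(L_{\alpha}/B_{\alpha},\,V\otimes\mathbb{C}_{\lambda}\bigr)\;\cong\;V\otimes H^{j}\bigl(L_{\alpha}/B_{\alpha},\,\mathbb{C}_{\lambda}\bigr)\qquad (j\ge 0),
\]
an isomorphism of $\hat{L_{\alpha}}$-modules. To prove it I would extend the given $\hat{B_{\alpha}}$-action on $V$ to the $\hat{L_{\alpha}}$-action and check that $[g,v]\mapsto\bigl(g\hat{B_{\alpha}},\,g\cdot v\bigr)$ defines a well-defined $\hat{L_{\alpha}}$-equivariant isomorphism between $\mathcal{L}(V)$ and the trivial bundle $\mathcal{O}_{\mathbb{P}^{1}}\otimes_{\mathbb{C}}V$ with the diagonal $\hat{L_{\alpha}}$-action; tensoring over $\mathcal{O}_{\mathbb{P}^{1}}$ with $\mathcal{L}(\mathbb{C}_{\lambda})$ gives an $\hat{L_{\alpha}}$-equivariant isomorphism $\mathcal{L}(V\otimes\mathbb{C}_{\lambda})\cong\mathcal{L}(\mathbb{C}_{\lambda})\otimes_{\mathbb{C}}V$, and taking sheaf cohomology (which commutes with $-\otimes_{\mathbb{C}}V$ for the fixed finite-dimensional space $V$, compatibly with the $\hat{L_{\alpha}}$-actions) yields the displayed formula. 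This is the tensor/projection identity for induced modules (cf.~\cite{Jan}); verifying its equivariance carefully is the main obstacle, since everything else is formal once we are reduced to line bundles.

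It then remains to compute $H^{j}(L_{\alpha}/B_{\alpha},\mathbb{C}_{\lambda})$, and for this I would apply Lemma~\ref{lem 2.1} with $w=s_{\alpha}$ and $\tau=e$ (so $\ell(w)=\ell(\tau)+1$). If $\langle\lambda,\alpha\rangle\ge 0$, part~(1) of Lemma~\ref{lem 2.1} together with $H^{j}(e,-)=0$ for $j\ge 1$ gives $H^{0}(L_{\alpha}/B_{\alpha},\mathbb{C}_{\lambda})=H^{0}(s_{\alpha},\lambda)$, the irreducible $\hat{L_{\alpha}}$-module of highest weight $\lambda$, and vanishing of $H^{j}$ for $j\ge 1$; with the tensor identity this is part~(1). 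If $\langle\lambda,\alpha\rangle=-1$, part~(4) of Lemma~\ref{lem 2.1} gives $H^{j}(L_{\alpha}/B_{\alpha},\mathbb{C}_{\lambda})=0$ for all $j$, hence part~(3). If $\langle\lambda,\alpha\rangle\le -2$, then $H^{0}(L_{\alpha}/B_{\alpha},\mathbb{C}_{\lambda})=0$ (a negative line bundle on $\mathbb{P}^{1}$ has no sections), while part~(3) of Lemma~\ref{lem 2.1} with $j=0$ gives $H^{1}(L_{\alpha}/B_{\alpha},\mathbb{C}_{\lambda})=H^{0}(s_{\alpha},s_{\alpha}\cdot\lambda)=H^{0}(L_{\alpha}/B_{\alpha},\mathbb{C}_{s_{\alpha}\cdot\lambda})$, using $\langle s_{\alpha}\cdot\lambda,\alpha\rangle=-\langle\lambda,\alpha\rangle-2\ge 0$; higher cohomology vanishes as $\dim\mathbb{P}^{1}=1$. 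Feeding this into the tensor identity gives part~(2), which would complete the argument.

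Finally, I would point out the reason one passes to the simply connected cover $\hat{G}$ at all: when $G$ is of adjoint type the module $V$ need not be a representation of $L_{\alpha}$, nor need $\lambda$ be a character of $T$; replacing $L_{\alpha}$ and $B_{\alpha}$ by $\hat{L_{\alpha}}$ and $\hat{B_{\alpha}}$ removes both difficulties without changing the variety $L_{\alpha}/B_{\alpha}$, so the statement is exactly what is needed for the cohomology computations later in the paper.
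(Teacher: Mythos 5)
Your proposal is correct and follows essentially the same route as the paper: reduce to line bundles via the tensor identity $H^{j}(L_{\alpha}/B_{\alpha},V\otimes\mathbb{C}_{\lambda})\cong V\otimes H^{j}(L_{\alpha}/B_{\alpha},\mathbb{C}_{\lambda})$ and then apply Lemma~\ref{lem 2.1} with $w=s_{\alpha}$, using $L_{\alpha}/B_{\alpha}\simeq P_{\alpha}/B$. The only difference is that you prove the tensor identity by hand via the equivariant trivialization of $\mathcal{L}(V)$, whereas the paper simply cites it from Jantzen (I, Propositions 4.8 and 5.12); the content is the same.
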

	
	\begin{proof}
		
		By \cite[I, Proposition 4.8, p.53]{Jan} and \cite[I, Proposition 5.12, p.77]{Jan} for $j\ge0$,
		we have the following isomorphism as 
		$\hat{L_{\alpha}}$-modules:
		$$H^j(L_{\alpha}/B_{\alpha}, V \otimes \mathbb C_{\lambda})\simeq V \otimes
		H^j(L_{\alpha}/B_{\alpha}, \mathbb C_{\lambda}).$$ 
		Now the proof of the lemma follows from Lemma \ref{lem 2.1} by taking $w=s_{\alpha}$ 
		and the fact that $L_{\alpha}/B_{\alpha} \simeq P_{\alpha}/B$.
	\end{proof}
	
	We now state the following lemma on indecomposable 
	$\hat{B_{\alpha}}$ (respectively,  $B_{\alpha}$) modules which will be used in computing 
	the cohomology modules (see \cite [Corollary 9.1, p.130]{BKS}).
	
	\begin{lem}\label{lem2.4}
		\begin{enumerate}
			\item
			Any finite dimensional indecomposable $\hat{B_{\alpha}}$-module $V$ is isomorphic to 
			$V^{\prime}\otimes \mathbb{C}_{\lambda}$ for some irreducible representation
			$V^{\prime}$ of $\hat{L_{\alpha}}$, and some character $\lambda$ of $\hat{B_{\alpha}}$.
			
			\item
			Any finite dimensional indecomposable $B_{\alpha}$-module $V$ is isomorphic to 
			$V^{\prime}\otimes \mathbb{C}_{\lambda}$ for some irreducible representation
			$V^{\prime}$ of $\hat{L_{\alpha}}$, and some character $\lambda$ of $\hat{B_{\alpha}}$.
		\end{enumerate}
	\end{lem}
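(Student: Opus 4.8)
The plan is to reduce assertion (2) to assertion (1) and then to classify the finite-dimensional indecomposable $\hat{B_\alpha}$-modules directly, exploiting that $\hat{L_\alpha}$ has semisimple rank one. For the reduction, note that through the central isogeny $\pi\colon\hat{G}\to G$ every rational $B_\alpha$-module becomes a rational $\hat{B_\alpha}$-module on which $\ker\pi$ acts trivially, and that the two module structures have the same lattice of submodules; hence an indecomposable $B_\alpha$-module is indecomposable as a $\hat{B_\alpha}$-module, and (1) applies verbatim. So everything rests on (1).

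For (1) I would first set up the structure of $\hat{B_\alpha}$. Since $\hat{L_\alpha}$ has semisimple rank one, $\hat{B_\alpha}=\hat{T}\cdot U$ is the semidirect product of the maximal torus $\hat{T}$ (the preimage of $T$ in $\hat{G}$) with its one-dimensional unipotent radical $U$, a root subgroup whose root is $\alpha$ or $-\alpha$; fix a nonzero element $e$ of the Lie algebra of $U$, so that $e$ acts nilpotently on every finite-dimensional module. Over $\mathbb C$, a finite-dimensional rational representation of $U\cong\mathbb{G}_a$ is the same datum as a finite-dimensional vector space together with the nilpotent action of $e$ (the representation being recovered by $s\mapsto\exp(se)$), and since $e$ spans a root space it carries a $\hat{T}$-weight space of weight $\mu$ into the one of weight $\mu\pm\alpha$. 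Thus a finite-dimensional rational $\hat{B_\alpha}$-module is precisely a finite-dimensional $X(\hat{T})$-graded vector space equipped with a nilpotent endomorphism $e$ of $\hat{T}$-degree $\pm\alpha$, and a subspace is a $\hat{B_\alpha}$-submodule exactly when it is graded and $e$-stable, i.e. a graded $\mathbb C[e]$-submodule (with $\deg e=\pm\alpha$).

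Now I would run the classification. Splitting a finite-dimensional $\hat{B_\alpha}$-module $V$ along the cosets of $\mathbb Z\alpha$ in $X(\hat{T})$ produces a decomposition into $\hat{B_\alpha}$-submodules, so if $V$ is indecomposable all of its $\hat{T}$-weights lie in a single such coset; restricted there, $V$ is a finite-dimensional, hence torsion, graded module over the graded principal ideal domain $\mathbb C[e]$. By the graded structure theorem over a principal ideal domain such a module is a direct sum of shifted copies of the cyclic modules $\mathbb C[e]/(e^{k})$, so indecomposability forces $V\cong\mathbb C[e]/(e^{d})$ up to a shift of grading, where $d=\dim V$; concretely, $V$ has one-dimensional $\hat{T}$-weight spaces along an $\alpha$-string of length $d$, with $e$ carrying each of these isomorphically onto the next and annihilating the extremal one. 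On the other hand, because $\hat{G}$ is simply connected, for every $d\ge1$ there is an irreducible $\hat{L_\alpha}$-module $V'$ of dimension $d$, for instance $V'=L\big((d-1)\omega_\alpha\big)$, and by the representation theory of $SL(2,\mathbb C)$ the $\hat{T}$-weights of $V'$ form an $\alpha$-string of length $d$ along which $e$ acts in exactly the same way. Taking $\lambda\in X(\hat{T})=X(\hat{B_\alpha})$ to be the difference of the extremal weight of $V$ and the extremal weight of $V'$, and regarding $\mathbb C_\lambda$ as a character of $\hat{B_\alpha}$, the modules $V$ and $V'\otimes\mathbb C_\lambda$ have the same $\hat{T}$-weights with the same multiplicities and the same $e$-action; sending a $\mathbb C[e]$-generator of $V$ (a weight vector spanning the weight space not annihilated by $e$) to one of $V'\otimes\mathbb C_\lambda$ of the same weight, and extending $e$-linearly, yields an isomorphism of $\hat{T}$-modules and of $U$-modules, hence of $\hat{B_\alpha}$-modules, $V\cong V'\otimes\mathbb C_\lambda$. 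This proves (1), and with it (2).

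I do not expect a genuine obstacle, but two points will need care. The first is the dictionary ``rational $\hat{B_\alpha}$-module $=$ finite-dimensional $X(\hat{T})$-graded $\mathbb C[e]$-module'': one must invoke characteristic zero to pass between the $U$-action and the operator $e$, observe that the grading is the $\hat{T}$-weight grading while the degree of $e$ is forced by its lying in a root space, and check that $U$-stability of a subspace is equivalent to $e$-stability. The second is the final identification, where one uses the simple connectedness of $\hat{G}$ to produce an irreducible $\hat{L_\alpha}$-module of the prescribed dimension and then verifies that the weight-by-weight matching is genuinely $U$-equivariant. Beyond these, everything is the structure theory of graded modules over $\mathbb C[e]$ together with the representation theory of $SL(2,\mathbb C)$.
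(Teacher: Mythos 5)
Your argument is correct, but it follows a genuinely different route from the paper, which disposes of part (1) by a bare citation to \cite[Corollary 9.1, p.130]{BKS} and of part (2) by the same one-line reduction you use (pulling the $B_{\alpha}$-action back along the surjection $\hat{B_{\alpha}}\to B_{\alpha}$, under which the submodule lattice is unchanged). What you add is a self-contained proof of the cited classification: the dictionary between finite-dimensional rational $\hat{B_{\alpha}}$-modules and $X(\hat{T})$-graded vector spaces with a nilpotent operator $e$ of degree $\pm\alpha$ (valid because the base field is $\mathbb{C}$, so the $\mathbb{G}_a$-action is $\exp$ of a nilpotent), the splitting along cosets of $\mathbb{Z}\alpha$, the graded structure theory of $\mathbb{C}[e]$ forcing an indecomposable to be a single $\alpha$-string, and the identification of that string with $L((d-1)\omega_{\alpha})\otimes\mathbb{C}_{\lambda}$ via $SL(2,\mathbb{C})$ theory (here simple connectedness of $\hat{G}$ is genuinely needed to have $\omega_{\alpha}\in X(\hat{T})$, and one should match highest weight to highest weight when choosing $\lambda$, and note that $e$ acts as $e\otimes 1$ on $V'\otimes\mathbb{C}_{\lambda}$ since characters of $\hat{B_{\alpha}}$ kill the unipotent radical). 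The two points you flag as needing care are exactly the right ones and neither is a gap; the only cost of your approach is that the ``graded structure theorem over $\mathbb{C}[e]$'' step deserves either a short direct induction (homogeneous Jordan basis) or a precise reference, whereas the paper's citation outsources the entire content. The benefit is that your version makes the lemma independent of \cite{BKS} and makes visible why the statement is special to semisimple rank one and characteristic zero.
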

	\begin{proof}
		Proof of part (1) follows from \cite [Corollary 9.1, p.130]{BKS}.
		
		Proof of part (2) follows from the fact that every $B_{\alpha}$-module can be viewed  as a 
		$\hat{B_{\alpha}}$-module via the natural homomorphism. 
	\end{proof} 
	
	We conclude this section by proving a generalization of \cite[Theorem 4.2, p.772]{Kan} which describes the automorphism group of a smooth Schubert variety in the simply laced case. We use this result frequently in our article.
	
	Assume that $G$ is simply-laced. Let $P=P_{I}$ be the standard parabolic subgroup of $G$ corresponding to a subset $I\subseteq S.$ Let $w\in W^I,$ and $X_{P}(w):=\overline{BwP/P}$ be the Schubert variety in $G/P$ corresponding to $w.$ For the left action of $G$ on $G/P,$ let $P_{w}$ denote the stabilizer of $X_{P}(w)$ in $G.$
	\begin{thm}\label{thm1}
		Then we have
		\item$(1)$ The homomorphism $\varphi_{w}:P_{w}
		\longrightarrow Aut^{0}(X_{P}(w))$ induced by the action of $P_{w}$ on $X_{P}(w)$ is surjective.
		
		\item $(2)$ $\varphi_{w}: P_{w}\longrightarrow Aut^{0}(X_{P}(w))$ is an isomorphism if and only if $w^{-1}(\alpha_{0})$ is a negative root.
	\end{thm}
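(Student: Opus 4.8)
\medskip

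The plan is to reduce everything to the analysis of the Lie algebra $\mathfrak{h}^0 := H^0(X_P(w), T_{X_P(w)})$ of $\operatorname{Aut}^0(X_P(w))$, following the strategy of \cite{Kan} but now carried out relative to the parabolic $P$ rather than $B$. First I would record the standard fact that $\operatorname{Aut}^0(X_P(w))$ is a connected algebraic group acting on $X_P(w)$, so that $\varphi_w$ is a morphism of algebraic groups; surjectivity in part $(1)$ then amounts to showing that the image of $\mathfrak{p}_w := \operatorname{Lie}(P_w)$ in $H^0(X_P(w), T_{X_P(w)})$ is everything. The key geometric input is the exact sequence relating the tangent sheaf of $X_P(w)$ to that of the ambient homogeneous space $G/P$: pulling back $0 \to T_{G/P}|_{X_P(w)}$ and using that $X_P(w)$ is $B$-stable, one gets a map $\mathfrak{g} = H^0(G/P, T_{G/P}) \to H^0(X_P(w), T_{G/P}|_{X_P(w)})$, and the subalgebra preserving the ideal sheaf of $X_P(w)$ is exactly $\mathfrak{p}_w$. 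So the heart of part $(1)$ is proving that every global vector field on $X_P(w)$ extends to (the image of) a global vector field on $G/P$, i.e. that the restriction $H^0(G/P, T_{G/P}) \to H^0(X_P(w), T_{G/P}|_{X_P(w)})$ already hits the image of $H^0(X_P(w), T_{X_P(w)})$. This is where the cohomological machinery of Section 2 enters: one filters $T_{G/P}|_{X_P(w)}$ by $B$-submodule bundles $\mathcal{L}(V)$ with $V$ an indecomposable $B$-module, writes each $V \cong V' \otimes \mathbb{C}_\lambda$ by Lemma \ref{lem2.4}, and computes $H^0(w, V)$ and $H^1(w, V)$ via Lemma \ref{lem 2.1} and Lemma \ref{lem2.3}, using $w \in W^I$ to run a reduced-expression induction $w = \tau s_\beta$. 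Since $G$ is simply-laced, the relevant weight bounds $\langle \lambda, \beta\rangle \in \{\ge 0, -1, \le -2\}$ behave uniformly, and the vanishing $H^1 = 0$ that one needs follows as in \cite{Kan}.

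\medskip

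For part $(2)$, the kernel of $\varphi_w$ is the (finite, by adjointness of $G$, hence trivial on the connected component) subgroup of $P_w$ acting trivially on $X_P(w)$; on Lie algebras $\varphi_w$ is an isomorphism iff $\dim \mathfrak{p}_w = \dim H^0(X_P(w), T_{X_P(w)})$, and by part $(1)$ the latter dimension exceeds the former precisely when the restriction map $\mathfrak{g} \to H^0(X_P(w), T_{G/P}|_{X_P(w)})$ acquires extra global vector fields not coming from $\mathfrak{p}_w$ — equivalently, when $H^0(w, \mathfrak{g}/\mathfrak{p}_I) $ (the quotient giving $T_{G/P}$) is strictly larger than $\mathfrak{g}/\mathfrak{p}_w$. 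The point is that the "extra" piece is detected by the top root space $\mathfrak{g}_{\alpha_0}$: one shows $H^0(w, \mathfrak{g}/\mathfrak{p}_I)$ contains a copy of the simple $G$-module $V(\omega)$ for an appropriate $\omega$ exactly when $w^{-1}(\alpha_0) > 0$, because in that case the highest weight line survives all the Demazure operators $H^0(s_\beta, -)$ along a reduced word for $w$, whereas if $w^{-1}(\alpha_0) < 0$ some intermediate step has $\langle \cdot, \beta\rangle = -1$ and kills the contribution (Lemma \ref{lem 2.1}(4)). Concretely I would identify the would-be extra vector field with the image of a lowest weight vector and show it is tangent to $X_P(w)$ iff $w^{-1}(\alpha_0)<0$; this is the step where the condition on $w$ is pinned down, and it is essentially \cite[Theorem 4.2]{Kan} with $G/B$ replaced by $G/P$.

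\medskip

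The main obstacle I anticipate is purely at the level of the filtration bookkeeping for a general parabolic $P = P_I$: the tangent bundle $T_{G/P}$ corresponds to the $B$-module $\mathfrak{g}/\mathfrak{p}_I$, which is \emph{not} multiplicity-free as a $B$-module in general, so one must be careful that the indecomposable summands appearing in a $B$-stable filtration of $\mathfrak{g}/\mathfrak{p}_I$ really are of the form $V' \otimes \mathbb{C}_\lambda$ with the weights $\lambda$ controlled enough to force $H^1(w, \cdot) = 0$ and to isolate the single potentially-extra summand. Handling this requires choosing the filtration compatibly with $w$ (e.g. refining by $B$-orbit closures, or by the partial order on weights) and invoking Lemma \ref{lem2.4} summand-by-summand. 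Once the filtration is set up correctly, the long exact sequences in cohomology together with Lemma \ref{lem 2.1} and Lemma \ref{lem2.3} do the rest, and $(1)$ and $(2)$ follow. I would also remark that the "only if" direction of $(2)$ is immediate: if $w^{-1}(\alpha_0) > 0$ then $P_w \subsetneq G$ (since $\mathfrak{g}_{\alpha_0}$ does not stabilize $X_P(w)$ but its flow produces an automorphism), yet the extra vector field shows $\operatorname{Aut}^0(X_P(w))$ is strictly larger, so $\varphi_w$ cannot be surjective onto an isomorphism — wait, more precisely, $\varphi_w$ fails to be \emph{injective-with-full-image}; the clean statement is that equality of dimensions fails, which is what the cohomology computation delivers.
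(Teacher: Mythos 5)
Your outline of part (1) is essentially the paper's route: restrict global vector fields from $G/P$ to $X_{P}(w)$, prove that the restriction map $H^{0}(w_{0}^{I},\mathfrak{g/p})\longrightarrow H^{0}(w,\mathfrak{g/p})$ is surjective, and identify the preimage of $H^{0}(X_{P}(w),T_{X_{P}(w)})$ with $\mathfrak{p}_{w}$ as the stabilizer of the ideal sheaf. Two corrections of emphasis: the surjectivity is obtained from the short exact sequence $0\to\mathfrak{p/b}\to\mathfrak{g/b}\to\mathfrak{g/p}\to 0$ together with $H^{1}(w,\beta)=0$ for $\beta\in R^{+}$ and surjectivity of restriction from $G/B$ (both quoted from \cite{Kan}), not from the indecomposable-module machinery of Lemmas \ref{lem2.3} and \ref{lem2.4} (which concern $B_{\alpha}$-modules and are used only in the final remark); and your worry about non-multiplicity-free filtrations is a non-issue, since one filters by one-dimensional quotients $\mathbb{C}_{\beta}$ and only needs vanishing of $H^{1}(w,\beta)$ for positive roots $\beta$.

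Part (2) has genuine errors. Your claim that $\ker\varphi_{w}$ is finite ``by adjointness of $G$'' is false: for most $w$ the subgroup of $P_{w}$ acting trivially on $X_{P}(w)$ is positive-dimensional (take $w=s_{\alpha}$, so that $X_{P}(w)\cong\mathbb{P}^{1}$), and if the kernel were always finite the criterion $w^{-1}(\alpha_{0})<0$ would be empty of content. Since part (1) already gives surjectivity, the entire issue in part (2) is \emph{injectivity} of $d\varphi_{w}$, not the appearance of extra vector fields; in particular $\dim Aut^{0}(X_{P}(w))\le\dim P_{w}$ always, so your dimension inequality is backwards, as is your assertion that $H^{0}(w,\mathfrak{g/p})$ contains the full simple module precisely when $w^{-1}(\alpha_{0})>0$ (the paper shows $H^{0}(w,\mathfrak{g/p})\cong\mathfrak{g}$ exactly when $w^{-1}(\alpha_{0})<0$). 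The idea you are missing is the one that drives the paper's proof: $\ker d\varphi_{w}$ is a $B$-submodule of $\mathfrak{g}$, and every nonzero $B$-submodule of $\mathfrak{g}$ contains the unique $B$-stable line $\mathfrak{g}_{-\alpha_{0}}$; hence injectivity is equivalent to the weight $-\alpha_{0}$ surviving in $H^{0}(w,\mathfrak{g/p})$, which, by the estimate $\mu\ge w(\alpha)\ge-\alpha_{0}$ on the weights of $H^{0}(w,\alpha)$ for $\alpha\in R^{+}\setminus R^{+}(P)$, happens if and only if $w(\alpha)=-\alpha_{0}$ for some such $\alpha$, i.e.\ if and only if $w^{-1}(\alpha_{0})<0$ (the hypothesis $w\in W^{I}$ is needed precisely here, to exclude $\alpha\in R^{+}(P)$). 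Finally, your sketch of the ``only if'' direction, in which $Aut^{0}(X_{P}(w))$ becomes strictly larger than $P_{w}$, directly contradicts part (1) and cannot be repaired in that form.
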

	\begin{proof}
		
		Proof of (1):
Let $T_{X_{P}(w)}$ (respectively, $T_{G/P}$) be the tangent sheaf of $X_{P}(w)$ (respectively, of $G/P$). We denote the restriction of $T_{G/P}$ to $X_{P}(w)$ also by $T_{G/P}.$ Note that $T_{G/P}=\mathcal{L}(\mathfrak{g/p}),$ and $T_{X_{P}(w)}$ is a subsheaf of $T_{G/P}.$
		
	Let $\pi: G/B\longrightarrow G/P$ be the natural map. Then the restriction of $\pi$ to $X(w),$ i.e., $\pi: X(w)\longrightarrow X_{P}(w)$ is a birational surjective morphism. Further, by \cite[Theorem 3.3.4(a), p.96]{BK}, $\pi^*:H^j(X_{P}(w), \mathcal{L}_{P}(V))\longrightarrow H^j(X(w), \pi^*\mathcal{L}_{P}(V))$ ($j\ge 0$) is an isomorphism for any $P$-module $V.$ Thus we denote the cohomology modules $H^j(X_{P}(w), \mathcal{L}_{P}(V))(\simeq H^j(X(w), \pi^*\mathcal{L}_{P}(V)))$ by simply $H^j(w, V).$
		
		Now we consider the short exact sequence 
		\begin{center}
			$0\longrightarrow \mathfrak{p/b}\longrightarrow \mathfrak{g/b}\longrightarrow\mathfrak{g/p}\longrightarrow0$
		\end{center}
		of $B$-modules.
		
Let $R^{+}(P)$ be the set of positive roots of the Levi factor of $P$ containing $T.$ Observe that $\mathfrak{p/b}$ has a filtration of $B$-submodules such that the successive quotients are of the form $\mathbb{C}_{\beta},$ where $\beta\in R^{+}(P).$ Further, by \cite[Corollary 3.6, p.771]{Kan} we have $H^j(w,\beta)=0$ ($j\ge 1$) for any positive root $\beta.$ 
		
	Hence, by using the long exact sequence associated to the above short exact sequence, we see that $H^0(w,\mathfrak{g/b})\longrightarrow H^0(w,\mathfrak{g/p})$ is surjective.

		On the other hand, by using \cite[Lemma 3.5, p.770]{Kan}, the restriction map $$H^0(w_0, \mathfrak{g/b})\longrightarrow H^0(w,\mathfrak{g/b})$$ is surjective. Hence, the restriction map $H^0(w_{0}^I, \mathfrak{g/b})\longrightarrow H^0(w, \mathfrak{g/b})$ is surjective.
		
		Further, we have the following commutative diagram:
		
		\[ \begin{tikzcd}
			H^0(w_{0}^{I},\mathfrak{g/b})\arrow[twoheadrightarrow]{r} \arrow[swap, twoheadrightarrow]{d}{} & H^0(w_{0}^{I},\mathfrak{g/p})\arrow{d}{r} \\%
			H^0(w,\mathfrak{g/b}) \arrow[twoheadrightarrow]{r}{}& H^0(w, \mathfrak{g/p})
		\end{tikzcd}
		\]

		Hence, the natural restriction map $r:H^0(w_{0}^{I},\mathfrak{g/p})\longrightarrow H^0(w, \mathfrak{g/p})$ is surjective.
		
		Since $P_w$ is a parabolic subgroup of $G$ containing $B,$ the Lie algebra $\mathfrak{p}_w$ is a Lie subalgebra of $\mathfrak{g}= H^0(w_0^{I}, \mathfrak{g/p})$ containing $\mathfrak{b}.$ Further, since $X_{P}(w)$ is a
		$P_w$-stable subvariety for the left action of $P_w$ on $G/P,$ we have the following commutative
		diagram of $B$-modules:

		\[ \begin{tikzcd}
			\mathfrak{p}_{w} \arrow{r}{d\varphi_w} \arrow[swap, hook]{d}{} & H^0(X_{P}(w), T_{X_{P}(w)})\arrow[hook]{d}{} \\%
			H^0(w_{0}^{I}, \mathfrak{g/p}) \arrow[twoheadrightarrow]{r}{r}& H^0(w, \mathfrak{g/p})
		\end{tikzcd}
		\]

		Now, let $\mathfrak{q} = r^{-1}(H^0(X_{P}(w), T_{X_{P}(w)})).$ Note that since $\mathfrak{q}$ is a $B$-submodule of $\mathfrak{g}$ containing
		$\mathfrak{p}_w,$ $\mathfrak{q}$ is a parabolic subalgebra of $\mathfrak{g}$ containing $\mathfrak{p}_w.$ We denote the restriction of $r$ to $\mathfrak{q}$ also
		by $r.$ We now show that $\mathfrak{p}_{w}= \mathfrak{q}.$ Since $\mathfrak{g}= H^0(w_0^{I}, \mathfrak{g/p}),$ every element $x\in \mathfrak{q}\subseteq \mathfrak{g}$ is a tangent vector field on $G/P.$ Further, by the definition of $\mathfrak{q}$; the ideal sheaf of $X_{P}(w)$ is $x$ stable for every $x\in \mathfrak{q}.$ Therefore, $\mathfrak{q}$ is contained in the Lie algebra $\mathfrak{p}_w$ of the stabiliser $P_w$ of $X_P(w)$ in $G.$ Thus, we have $\mathfrak{p}_w =\mathfrak{q}.$
		
		Clearly, $r:\mathfrak{p}_{w}=\mathfrak{q} \longrightarrow H^0(X_{P}(w), T_{X_{P}(w)})$ is a homomorphism of Lie algebras. Therefore $d\varphi_{w}:\mathfrak{p}_w\longrightarrow H^0(X_{P}(w), T_{X_{P}(w)})$ is surjective. Let $A=\varphi_w(P_{w})\subseteq Aut^0(X_P(w)).$ Then we have $d\varphi_w(\mathfrak{p}_{w})=Lie(A)\subseteq H^0(X_{P}(w), T_{X_{P}(w)}).$ Since $d\varphi_{w}: \mathfrak{p}_{w}\longrightarrow H^0(X_{P}(w), T_{X_{P}(w)})$ is surjective, we have $Lie(A)=H^0(X_{P}(w), T_{X_{P}(w)}).$ Therefore, we have $A=Aut^0(X_{P}(w))$ and hence, $\varphi_w: P_{w}\longrightarrow Aut^0(X_{P}(w))$ is surjective.

		Proof of (2):
		Assume that $w^{-1}(\alpha_{0})$ is a negative root.
		Then we claim that $H^0(w, \mathfrak{p})=0.$

	Note that $\mathfrak{p}$ has a filtration of $B$-submodules such that the successive quotients are of the form $\mathbb{C}_{0}$ or $\mathbb{C}_{\beta}$ for some $\beta\in R^{+}(P)\cup R^{-}.$  So, every weight of $H^0(w,\mathfrak{p})$ is a weight of $H^0(w,0)$ or $H^0(w,\beta)$ for some $\beta\in R^{+}(P)\cup R^{-}.$ Now, if $-\alpha_{0}$ is a weight of $H^0(w,\mathfrak{p}),$ then $-\alpha_{0}$ is a weight of $H^0(w,\beta)$ for some $\beta\in R^{+}(P)\cup R^{-}.$ Further, every weight $\mu$ of $H^0(w,\beta)$ satisfies $\mu \ge -\alpha_{0}.$ Thus, we have $w(\beta)=-\alpha_{0}.$ Since $w^{-1}(\alpha_{0})<0,$ it follows that $\beta \notin R^{-}.$  So, we have $\beta\in R^{+}(P).$
		
Now we show that $\beta\in R^{+}\setminus R^{+}(P).$  Since $w\in W^I,$ we have $R^{+}(w)\cap R^{+}(w_{0, I})=\emptyset.$ Now, since $\beta\in R^{+}(w),$ we have $\beta \notin R^{+}(w_{0,I}),$ i.e., $\beta\notin R^{+}(P),$ which is a contradiction to the fact that $\beta\in R^{+}(P).$ Hence, $-\alpha_{0}$ is not a weight of $H^0(w,\mathfrak{p}).$ 
		
		Now since $H^0(w, \mathfrak{p})$ is a $B$-submodule of $H^0(w,\mathfrak{g})=\mathfrak{g}$ such that $H^0(w,\mathfrak{p})_{-\alpha_{0}}$ is zero, and  $\mathfrak{g}_{-\alpha_{0}}$ is the unique $B$-stable line in $\mathfrak{g},$ it follows that $H^0(w, \mathfrak{p})=0.$ Therefore, the map $H^0(w,\mathfrak{g})\longrightarrow H^0(w, \mathfrak{g/p})$ induced by the map $\mathfrak{g}\longrightarrow \mathfrak{g/p}$ is injective. Further, by \cite[Lemma 3.4, p.770]{Kan}, the map $H^0(w,\mathfrak{g})\longrightarrow H^0(w,\mathfrak{g/p})$ is surjective. Thus, the map $H^0(w,\mathfrak{g})\longrightarrow H^0(w,\mathfrak{g/p})$ is an isomorphism. Hence, $H^0(w,\mathfrak{g/p})$ is isomorphic to $\mathfrak{g}$ as $B$-module. Therefore, $H^0(w, \mathfrak{g/p})$ has a unique $B$-stable line, namely $\mathfrak{g}_{-\alpha_{0}}.$ Note that since $w^{-1}(\alpha_{0})< 0,$ we have $w\neq id.$ Therefore, the action of $P_w$ on $X_{P}(w)$ is non trivial. Hence, the homomorphism $d\varphi_{w}: \mathfrak{p}_w\longrightarrow H^0(X_{P}(w),T_{X_{P}(w)})$ of $B$-modules is non-zero. Therefore, the $B$-stable line $H^0(w, \mathfrak{g/p})_{-\alpha_{0}}$ is in the image $d\varphi_w(\mathfrak{p}_w)\subset H^0(X_{P}(w), T_{X_{P}(w)})\subset H^0(w, \mathfrak{g/p}).$
		Hence, we have $\mathfrak{g}_{-\alpha_{0}}\cap \ker(d\varphi_w)= 0.$ Thus, $d\varphi_w:\mathfrak{p}_w \longrightarrow H^0(X_{P}(w),T_{X_{P}(w)})$ is injective.
		Since the base field is $\mathbb{C},$ it follows that $\varphi_w$ is separable. Hence, the kernel of $d\varphi_w$ is the Lie algebra of the kernel of $\varphi_{w}.$ Therefore, $\varphi_{w} : P_{w}\longrightarrow Aut^0(X_{P}(w))$ is injective. Now, $\varphi_{w}$ is an isomorphism follows from $(1).$

		Conversely, if $\varphi_w: P_w\longrightarrow Aut^0(X_{P}(w))$ is an isomorphism, then, the induced homomorphism
		$d\varphi_w: \mathfrak{p}_w\longrightarrow H^0(X_{P}(w), T_{X_{P}(w)})\subseteq H^0(w, \mathfrak{g/p})$ is injective. In particular, the $-\alpha_{0}$-weight
		space $H^0(w, \mathfrak{g/p})_{-\alpha_{0}}$ is non-zero.
		
We now show that $w^{-1}(\alpha_{0})<0.$ For this, we first note that the $B$-module $\mathfrak{g/p}$ has a composition series of $B$-modules with each successive simple quotient is isomorphic to $\mathbb{C}_{\alpha},$ where $\alpha\in R^{+}\setminus R^{+}(P).$ Applying \cite[Corollary 3.6, p.771]{Kan} we see that the
		$B$-module $H^0(w,\mathfrak{g/p})$ has a filtration of $B$-submodules with each successive quotient is isomorphic to $H^0(w,\alpha)$ for some $\alpha \in R^{+}\setminus R^{+}(P).$ Now, since $H^0(w, \mathfrak{g/p})_{-\alpha_{0}}\neq0,$   $H^0(w,\alpha)_{-\alpha_{0}}\neq0$ for some $\alpha\in R^{+}\setminus R^{+}(P).$ On the other
		hand, there is a $v\in W$ such that $v(\alpha_{0}) =\alpha.$ Without loss of generality, we may assume that $v$ is of minimum length among such elements. It follows from the Demazure character formula that if $H^0(wv,\alpha_{0})_{\mu}\neq 0,$ then $\mu$ is in the convex hull of the set $\{x(\alpha_{0}): x\le wv\}$ ( see \cite[Theorem 3.3.8, p.97, equation (3)]{BK} and \cite[Proposition 14.18(b),p.379]{Jan}). Note that the convention for the signature of the weights in the Demazure character formula in this paper is the same as the one in \cite{Jan}. Further, using the above arguments, we
		see that $\mathbb{C}_{\alpha}$ is a $B$-submodule of $H^0(v, \alpha_{0}).$ Therefore, $H^0(w, \alpha)$ is a $B$-submodule of $H^0(w,H^0(v, \alpha_{0}))=H^0(wv,\alpha_{0}).$ Hence, every weight $\mu$ of $H^0(w,\alpha)$ satisfies $\mu\ge  w(\alpha).$ Clearly, $w(\alpha)\ge -\alpha_{0}.$ Therefore, since $H^0(w, \alpha)_{-\alpha_{0}}\neq 0,$ we have $w(\alpha)=-\alpha_{0}.$ Thus, $w^{-1}(\alpha_{0})<0.$

	\end{proof}

	\section{Preliminaries on Minuscule fundamental weight}
	Now on wards we assume that $G$ is simply-laced. Note that $-w_{0}$ is an automorphism of the root system $R.$  Let $\sigma$ be the Dynkin diagram automorphism induced by $-w_{0}$ i.e. $\alpha_{i}\mapsto \alpha_{\sigma(i)}=-w_{0}(\alpha_{i})$ for $1\le i\le n.$

	\begin{lem}\label{lem 3.2} Let $1\le r\le n.$ Then $\alpha_{r}$ is co-minuscule  root if and only if $\omega_{r}$ is minuscule. 
	\end{lem}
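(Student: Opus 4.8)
The plan is to reduce the whole statement to the single identity $\langle \omega_r,\alpha_0\rangle = c_r$, valid in the simply-laced case, where $\alpha_0 = \sum_{i=1}^{n}c_i\alpha_i$ is the highest root. First I would observe that since $G$ is simply-laced, $(\alpha,\alpha) = (\alpha_0,\alpha_0)$ for every root $\alpha$, so that for any $\mu \in X(T)\otimes\mathbb{R}$ the assignment $\beta \mapsto \langle\mu,\beta\rangle = \frac{2(\mu,\beta)}{(\alpha_0,\alpha_0)}$ is the restriction to $R$ of a genuine linear functional on $X(T)\otimes\mathbb{R}$. In particular $\langle\omega_r,\alpha_i\rangle = \delta_{ri}$, and therefore $\langle\omega_r,\alpha_0\rangle = \sum_i c_i\langle\omega_r,\alpha_i\rangle = c_r$. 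This computation is the crux; each implication then follows in one line.

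For ``$\alpha_r$ co-minuscule $\Rightarrow$ $\omega_r$ minuscule'': assume $c_r = 1$ and fix an arbitrary $\beta \in R^{+}$. Since $\alpha_0$ is the highest root, $\alpha_0 - \beta = \sum_i k_i\alpha_i$ with all $k_i \ge 0$; applying the linear functional $\langle\omega_r,\,\cdot\,\rangle$ gives $\langle\omega_r,\alpha_0\rangle - \langle\omega_r,\beta\rangle = k_r \ge 0$, so $\langle\omega_r,\beta\rangle \le c_r = 1$, and hence $\omega_r$ is minuscule. For the converse, if $\omega_r$ is minuscule then, taking $\beta = \alpha_0$, we get $c_r = \langle\omega_r,\alpha_0\rangle \le 1$; combined with $c_r \ge 1$ this forces $c_r = 1$, i.e. $\alpha_r$ is co-minuscule.

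The one non-formal ingredient, and the only place where I expect to invoke something external, is the inequality $c_r \ge 1$, i.e. that the highest root of an irreducible root system has full support. I would either cite this from the standard references on root systems already in the bibliography, or dispatch it with the usual argument: if the support of $\alpha_0$ were a proper subset of $S$, connectedness of the Dynkin diagram yields a simple root $\alpha_j$ outside the support with $(\alpha_j,\alpha_0) < 0$, whence $s_j(\alpha_0) = \alpha_0 - \langle\alpha_0,\alpha_j\rangle\alpha_j > \alpha_0$ is a root, contradicting the maximality of $\alpha_0$. Apart from this there is no real obstacle; the only point requiring care is that the extension of $\langle\omega_r,\,\cdot\,\rangle$ to a linear functional genuinely uses simple-lacedness, which is exactly the hypothesis in force.
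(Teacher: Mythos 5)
Your proposal is correct and follows essentially the same route as the paper: both directions hinge on the simply-laced identity $\langle\omega_r,\alpha_0\rangle=c_r$ together with the maximality of $\alpha_0$ in the root order. You spell out two points the paper leaves implicit (the linearity argument behind $\langle\omega_r,\alpha_0\rangle\ge\langle\omega_r,\beta\rangle$, and the full-support fact $c_r\ge 1$ needed in the converse), but the substance is identical.
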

	
	\begin{proof}
		Let $\alpha_{0}=\sum\limits_{i=1}^{n}c_{i}\alpha_{i}.$
		Suppose that the coefficient of $\alpha_{r}$ in $\alpha_{0}$ is $1.$ Then we have $\langle\omega_{r}, \alpha_{0} \rangle=1,$ as $G$ is simply laced.  Since $G$ is simply laced, we have $\langle \omega_{r}, \alpha_{0}\rangle\ge \langle \omega_{r}, \beta\rangle$ for all $\beta \in R^{+}.$ Hence $\omega_{r}$ is  minuscule.

		Conversely, we assume that $\omega_{r}$ is  minuscule. Since $G$ is simply laced, we have $\langle\omega_{r}, \alpha_{0} \rangle=c_{r}.$ Thus we have $c_{r}=1,$ as $\omega_{r}$ is  minuscule.
	\end{proof}
	
	\begin{lem}\label{lem 2.3}
		$\omega_{r}$ is  minuscule  if and only if $w_{0,S \setminus \{\alpha_{r}\}}(\alpha_{r})=\alpha_{0}.$ 
	\end{lem}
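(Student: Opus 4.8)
The plan is to reduce both implications to the combinatorial fact, already available through Lemma~\ref{lem 3.2}, that $\omega_r$ is minuscule if and only if the coefficient $c_r$ of $\alpha_r$ in the highest root $\alpha_0$ equals $1$, and then to identify $w_{0,S\setminus\{\alpha_r\}}(\alpha_r)$ explicitly. Write $J:=S\setminus\{\alpha_r\}$. The first thing I would record is that every $w\in W_J$ leaves the $\alpha_r$-coordinate of any weight unchanged, since each generator $s_j$ with $j\ne r$ modifies only the $\alpha_j$-coordinate; in particular $\beta:=w_{0,J}(\alpha_r)$ is a positive root whose $\alpha_r$-coordinate is $1$, i.e. $\langle\omega_r,\beta\rangle=1$ (using that $G$ is simply-laced, so $\langle\omega_r,\cdot\rangle$ reads off the $\alpha_r$-coordinate of a root). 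This gives the reverse implication at once: if $w_{0,J}(\alpha_r)=\alpha_0$, then $c_r=\langle\omega_r,\alpha_0\rangle=\langle\omega_r,\beta\rangle=1$, so $\omega_r$ is minuscule.

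For the forward direction assume $c_r=1$. I would next note that $\alpha_r$ is anti-dominant for $W_J$, i.e. $\langle\alpha_r,\alpha_j\rangle\le 0$ for all $j\in J$; since $w_{0,J}$ carries the anti-dominant $W_J$-chamber to the dominant one, $\beta=w_{0,J}(\alpha_r)$ is $J$-dominant, $\langle\beta,\alpha_j\rangle\ge 0$ for all $j\in J$. The heart of the argument is then the claim that, when $c_r=1$, the only $J$-dominant positive root $\gamma$ with $\langle\omega_r,\gamma\rangle=1$ is $\alpha_0$ itself. To see this, suppose $\gamma\ne\alpha_0$: since $\gamma$ is a positive root distinct from the unique dominant (= highest) root, there is a simple root $\alpha_i$ with $\gamma+\alpha_i\in R^+$, and since $G$ is simply-laced this forces $\langle\gamma,\alpha_i\rangle=-1$. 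If $\alpha_i\in J$ this contradicts the $J$-dominance of $\gamma$; if $\alpha_i=\alpha_r$, then $\gamma+\alpha_r$ is a root whose $\alpha_r$-coordinate is $2$, contradicting $c_r=1$. Hence $\gamma=\alpha_0$, and applying this to $\gamma=\beta$ gives $w_{0,J}(\alpha_r)=\alpha_0$, as desired.

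The only non-formal ingredients are standard root-system facts in the simply-laced case: that the highest root is the unique dominant root, that a positive root other than $\alpha_0$ can be raised by some simple root, and that $\gamma+\alpha_i\in R$ with $\gamma\ne\pm\alpha_i$ implies $\langle\gamma,\alpha_i\rangle=-1$; this is where I expect the only real care to be needed, and once these are in hand the argument is short. As an alternative to the orbit/dominance step one may quote that, when $c_r=1$, the nilradical of the maximal parabolic with Levi $L_J$ is an irreducible $L_J$-module with lowest weight $\alpha_r$, hence with highest weight equal both to $w_{0,J}(\alpha_r)$ and to $\alpha_0$; but the direct argument above avoids invoking that structure result.
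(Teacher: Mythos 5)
Your proof is correct and follows essentially the same route as the paper: both directions reduce to Lemma~\ref{lem 3.2} (minuscule $\Leftrightarrow c_r=1$), and the forward implication is obtained by noting that $w_{0,S\setminus\{\alpha_r\}}(\alpha_r)$ is dominant for $S\setminus\{\alpha_r\}$ and that failure of dominance at $\alpha_r$ would produce a root with $\alpha_r$-coefficient $2$, contradicting $c_r=1$, so the uniqueness of the dominant root forces it to be $\alpha_0$. The paper phrases the last step as applying $s_r$ rather than invoking the raising-by-a-simple-root lemma, but the computation is the same.
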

	\begin{proof}
		Assume that $\omega_{r}$ is minuscule.  
		Note that $-\alpha_{r}$ is $L_{S\setminus\{\alpha_{r}\}}$ dominant. Therefore, $w_{0, S\setminus\{\alpha_{r}\}}(-\alpha_{r})$ is $L_{S\setminus\{\alpha_{r}\}}$ negative dominant. Further, the coefficient of $\alpha_{r}$ in $w_{0,S\setminus\{\alpha_{r}\}}(-\alpha_{r})$ is $-1.$ 
		
		On the other hand, if $\langle w_{0, S\setminus\{\alpha_{r}\} }(-\alpha_{r}), \alpha_{r}\rangle\ge 1$ the coefficient of $\alpha_{r}$ in $s_{r}(w_{0,S\setminus\{\alpha_{r}\}}(-\alpha_{r}))$ is $\le- 2.$ Since $-\alpha_{0}\le s_{r}w_{0, S\setminus \{\alpha_{r}\}}(-\alpha_{r}),$ the coefficient of $\alpha_{r}$ in $-\alpha_{0}$ is $\le -2.$ This is a contradiction to Lemma \ref{lem 3.2}. Hence $w_{0, S\setminus\{\alpha_{r}\}}(-\alpha_{r})$ is negative dominant. Thus we have  $w_{0, S\setminus\{\alpha_{r}\}}(\alpha_{r})=\alpha_{0}.$
		
		Conversely, suppose that $w_{0,S\setminus\{\alpha_{r}\}}(\alpha_{r})=\alpha_{0}.$ Let $\alpha_{0}=\sum\limits_{i=1}^{n}c_{i}\alpha_{i}.$ Since $G$ is simply-laced, we have $\langle\omega_{r}, \alpha_{0}\rangle=c_{r}.$ 
		Further, since $w_{0, S\setminus\{\alpha_{r}\}}(\alpha_{r})=\alpha_{0},$ we have  $\langle \omega_{r},\alpha_{0}\rangle=\langle  \omega_{r}, w_{0, S\setminus\{\alpha_{r}\}}(\alpha_{r})\rangle.$ As $\langle\cdot, \cdot \rangle $ is $W$-invariant, we have $\langle \omega_{r}, w_{0, S\setminus\{\alpha_{r}\}}(\alpha_{r})\rangle=\langle w_{0, S\setminus\{\alpha_{r}\}}(\omega_{r}), \alpha_{r}\rangle.$ Since $w_{0,S\setminus\{\alpha_{r}\}}(\omega_{r})=\omega_{r},$ we have $\langle w_{0,S\setminus\{\alpha_{r}\}}(\omega_{r}),\alpha_{r})\rangle=
		\langle \omega_{r},\alpha_{r}\rangle=1.$ Thus we have $c_{r}=1.$ Therefore, by Lemma \ref{lem 3.2}, $\omega_{r}$ is minuscule.
	\end{proof}
	
	\begin{lem}\label{lem 2.4} Assume that $\omega_{r}$ is minuscule.
		Let $v\in W^{S\setminus\{\alpha_{r}\}}.$ Then we have $v={w_{0}^{S\setminus\{\alpha_{r}\}}}$ if and only if $v(\alpha_{0})<0.$ 
	\end{lem}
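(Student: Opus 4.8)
\emph{Plan.} Write $J := S\setminus\{\alpha_{r}\}$ (so $w_{0}^{S\setminus\{\alpha_{r}\}} = w_{0}^{J}$), let $R_{J}^{+}$ denote the set of positive roots supported in $J$, and put $\Phi := \{\beta\in R^{+} : \alpha_{r}\in\mathrm{supp}(\beta)\}$, so that $R^{+} = R_{J}^{+}\sqcup\Phi$. I would argue entirely with inversion sets $R^{+}(v) = \{\beta\in R^{+} : v(\beta)<0\}$, using that $v\mapsto R^{+}(v)$ is injective and that for $v\in W^{J}$ one has $v(\alpha)>0$ for all $\alpha\in R_{J}^{+}$, hence $R^{+}(v)\subseteq\Phi$. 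The first step is the identity $R^{+}(w_{0}^{J}) = \Phi$: the inclusion $\subseteq$ is the last remark, while the length count $\ell(w_{0}^{J}) = \ell(w_{0}) - \ell(w_{0,J}) = |R^{+}| - |R_{J}^{+}| = |\Phi|$ forces equality.

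\emph{Forward direction.} If $v = w_{0}^{J}$ then, $\omega_{r}$ being minuscule, Lemma \ref{lem 2.3} gives $\alpha_{0} = w_{0,J}(\alpha_{r})$; hence $v(\alpha_{0}) = w_{0}^{J}w_{0,J}(\alpha_{r}) = w_{0}(\alpha_{r}) < 0$. (Equivalently: $\alpha_{r}\in\mathrm{supp}(\alpha_{0})$ by Lemma \ref{lem 3.2}, so $\alpha_{0}\in\Phi = R^{+}(w_{0}^{J})$.)

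\emph{Converse.} Suppose $v\in W^{J}$ and $v(\alpha_{0})<0$; the claim is that $v(\beta)<0$ for every $\beta\in\Phi$. Granting this, $R^{+}(v)\supseteq\Phi$, and together with $R^{+}(v)\subseteq\Phi$ it yields $R^{+}(v) = \Phi = R^{+}(w_{0}^{J})$, so $v = w_{0}^{J}$ by injectivity. To prove the claim, fix $\beta\in\Phi$. Since $G$ is simply-laced, the coefficient of $\alpha_{r}$ in a root $\gamma$ equals $\langle\omega_{r},\gamma\rangle$; as $\omega_{r}$ is minuscule this coefficient is $\le 1$ for $\gamma\in R^{+}$, so it equals $1$ for $\beta$ (because $\alpha_{r}\in\mathrm{supp}(\beta)$) and also for $\alpha_{0}$ (Lemma \ref{lem 3.2}). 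Since $\alpha_{0}$ is the highest root, $\alpha_{0}-\beta$ is a non-negative integral combination of simple roots whose $\alpha_{r}$-coefficient is $1-1 = 0$; thus $\alpha_{0}-\beta = \sum_{j\in J}m_{j}\alpha_{j}$ with all $m_{j}\ge 0$. Therefore
\[
v(\beta) = v(\alpha_{0}) - \sum_{j\in J}m_{j}\,v(\alpha_{j}).
\]
Here $v(\alpha_{0})$ is a negative root and every $v(\alpha_{j})$ with $j\in J$ is a positive root (as $v\in W^{J}$), so the right-hand side lies in $-\sum_{i=1}^{n}\mathbb{Z}_{\ge 0}\alpha_{i}$; being a nonzero root, $v(\beta)$ is therefore negative.

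\emph{Main obstacle.} I do not expect a real difficulty here. The single point to get right is that minuscularity of $\omega_{r}$ pins the $\alpha_{r}$-coefficient of every positive root through $\alpha_{r}$ to exactly $1$, so that $\alpha_{0}-\beta$ becomes a non-negative combination of the simple roots in $J$; this is precisely what lets the sign of $v(\beta)$ be read off from the displayed identity, and it is where the hypothesis fails for non-minuscule $\omega_{r}$. An alternative route is to invoke the distributive-lattice structure of $W^{J}$ for minuscule $\omega_{r}$, in which $R^{+}(v)$ is an order ideal of the minuscule poset with unique maximal element $\alpha_{0}$; the self-contained inversion-set computation above seems preferable.
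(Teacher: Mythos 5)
Your proof is correct, and the forward direction coincides with the paper's (both invoke Lemma \ref{lem 2.3} to write $v(\alpha_{0})=w_{0}^{S\setminus\{\alpha_{r}\}}w_{0,S\setminus\{\alpha_{r}\}}(\alpha_{r})=w_{0}(\alpha_{r})<0$). For the converse, however, you take a genuinely different route. The paper argues with the Bruhat order on $W^{S\setminus\{\alpha_{r}\}}$: since $s_{\sigma(r)}w_{0}^{S\setminus\{\alpha_{r}\}}$ sends $\alpha_{0}$ to the positive root $\alpha_{\sigma(r)}$, the monotonicity of $u\mapsto u(\alpha_{0})$ (for the dominant weight $\alpha_{0}$) forces $v\nleq s_{\sigma(r)}w_{0}^{S\setminus\{\alpha_{r}\}}$, and then the comparability of every element of the minuscule quotient with this unique coatom yields $v=w_{0}^{S\setminus\{\alpha_{r}\}}$. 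You instead identify the inversion set $R^{+}(v)$ with $\Phi=R^{+}(w_{0}^{S\setminus\{\alpha_{r}\}})$ outright, by observing that minuscularity pins the $\alpha_{r}$-coefficient of every $\beta\in\Phi$ (and of $\alpha_{0}$) to exactly $1$, so that $\alpha_{0}-\beta$ is a non-negative combination of simple roots in $S\setminus\{\alpha_{r}\}$ and the sign of $v(\beta)$ can be read off directly; injectivity of $v\mapsto R^{+}(v)$ then finishes. Your version is more self-contained: it avoids both the Bruhat-order monotonicity fact and the implicit unique-coatom/comparability property of $W^{S\setminus\{\alpha_{r}\}}$ that the paper leaves unjustified, at the modest cost of the length count $\ell(w_{0}^{S\setminus\{\alpha_{r}\}})=|\Phi|$ and the coefficient computation $\langle\omega_{r},\beta\rangle=k_{r}$ in the simply-laced case. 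It also isolates exactly where minuscularity enters, which the paper's argument does less transparently.
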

	\begin{proof}
		Assume that $v=w_{0}^{S\setminus\{\alpha_{r}\}}.$ Since  $w_{0}=w_{0}^{S\setminus\{\alpha_{r}\}}w_{0,S\setminus\{\alpha_{r}\}},$ we have $w_{0}^{S\setminus\{\alpha_{r}\}}=w_{0}w_{0,S\setminus\{\alpha_{r}\}}.$ Therefore, by Lemma \ref{lem 2.3} we have $w_{0}^{S\setminus\{\alpha_{r}\}}(\alpha_{0})=w_{0}(\alpha_{r})=-\alpha_{\sigma(r)}.$ 
		
		Conversely, Assume that $v(\alpha_{0})<0.$ By Lemma \ref{lem 2.3}, we have $s_{\sigma(r)}{w_{0}}^{S\setminus\{\alpha_{r}\}}(\alpha_{0})=\alpha_{\sigma(r)}.$ Therefore, we have $v \nleq s_{\sigma(r)}w_{0}^{S\setminus\{\alpha_{r}\}}.$ Otherwise $v(\alpha_{0})\ge s_{\sigma(r)}w_{0}^{S\setminus\{\alpha_{r}\}}(\alpha_{0})=\alpha_{\sigma(r)}.$ Therefore, $v(\alpha_{0})$ is a positive root, a contradiction. Since $v,s_{\sigma(r)}w_{0}^{S\setminus\{\alpha_{r}\}}\in W^{S\setminus\{\alpha_{r}\}},$  we have ${s_{\sigma(r)}}{w_{0}}^{S\setminus\{\alpha_{r}\}}<v.$ Therefore, we have $v=w_{0}^{S\setminus\{\alpha_{r}\}}.$  
	\end{proof}
	
	\section{Preliminaries on non minuscule fundamental weight}
	In this section, we prove a crucial lemma for a non minuscule weight of type $D$ or $E$ associated to the fundamental weight $\alpha_{0}.$ We recall the Dynkin diagram of $D_{n}, E_{6}, E_{7}, E_{8}$ (see\cite[Theorem 11.4, p. 57-58]{Hum1}):
	
	\vspace{1.6cm}
\hspace{1.5cm}
	\setlength{\unitlength}{1.3cm}
	\begin{picture}(10,0)
		\thicklines
		\put(.2,0){\circle*{0.2}}
		\put(0,-0.5){$\alpha_{1}$}
		\put(1.2,0){\circle*{0.2}}
		\put(.2,0){\line(1,0){1}}
		\put(1.2,0){\line(1,0){1}}
		\put(1,-0.5){$\alpha_{2}$}
		\put(2.2,0){\circle*{0.2}}
		\put(2,-0.5){$\alpha_{3}$}
		\put(1.6,0){\line(1,0){1}}
		\put(7.2,0){\circle*{0.2}}
		\put(7,-0.5){$\alpha_{n-3}$}
		\put(8.2,0){\circle*{0.2}}
		\put(7.2,0){\line(1,0){1}}
		\put(8.2,0){\line(1,1){1}}
		\put(8.2,0){\line(1,-1){1}}
		\put(8,-0.5){$\alpha_{n-2}$}
		\put(9.2,-1){\circle*{0.2}}
		\put(9,-1.5){$\alpha_{n}$}
		\put(9.2,1){\circle*{0.2}}
		\put(9,1.5){$\alpha_{n-1}$}
		\put(4,0){\circle*{0.2}}
		\put(3.9,-0.5){$\alpha_{i}$}
		\put(1.7,-1.9){Figure 1: Dynkin diagram of $D_{n}(n\ge 4).$}
		\put(1.7,-2.4){$\alpha_{0}=\alpha_{1}+2(\alpha_{2}+\alpha_{3}+\cdots +\alpha_{n-3}+\alpha_{n-2})+\alpha_{n-1}+\alpha_{n}=\omega_{2}.$}
		\put(4,0){\line(1,0){1}}
		\put(5,0){\circle*{0.2}}
		\put(4.9,-0.5){$\alpha_{i+1}$}
		\put(4,0){\circle*{0.2}}
		\put(4,0){\line(1,0){1}}
		\put(5,0){\circle*{0.2}}
		\put(3.5,0){\line(1,0){1}}
		\put(4.5,0){\line(1,0){1}}
		\put(6.7,0){\line(1,0){1}}
	\end{picture}
	
	\vspace{5cm}
	\hspace{3.5cm}
	\setlength{\unitlength}{1.4cm}
	\begin{picture}(10,0)
		\thicklines
		\put(.2,0){\circle*{0.2}}
		\put(0,-0.5){$\alpha_{1}$}
		\put(1.2,0){\circle*{0.2}}
		\put(.2,0){\line(1,0){1}}
		\put(1.2,0){\line(1,0){1}}
		\put(1,-0.5){$\alpha_{3}$}
		\put(2.2,0){\circle*{0.2}}
		\put(2,-0.5){$\alpha_{4}$}
		\put(2.2,0){\line(1,0){1}}
		\put(3.2,0){\circle*{0.2}}
		\put(3.1,-0.5){$\alpha_{5}$}
		\put(4.2,0){\circle*{0.2}}
		\put(4,-0.5){$\alpha_{6}$}
		\put(3.2,0){\line(1,0){1}}
		\put(2.2,1){\circle*{0.2}}
		\put(2.1,1.3){$\alpha_{2}$}
		\put(2.2,0){\line(0,1){1}}
		\put(.3,-1.5){Figure 2: Dynkin diagram of $E_{6}.$}
		\put(.3,-1.9){$\alpha_{0}=\alpha_{1}+2\alpha_{2}+2\alpha_{3}+3\alpha_{4}+2\alpha_{5}+\alpha_{6}=\omega_{2}.$}
	\end{picture}

	\vspace{5cm}
	\hspace{3.5cm}
	\setlength{\unitlength}{1.4cm}
	\begin{picture}(10,0)
		\thicklines
		\put(.2,0){\circle*{0.2}}
		\put(0,-0.5){$\alpha_{1}$}
		\put(1.2,0){\circle*{0.2}}
		\put(.2,0){\line(1,0){1}}
		\put(1.2,0){\line(1,0){1}}
		\put(1,-0.5){$\alpha_{3}$}
		\put(2.2,0){\circle*{0.2}}
		\put(2,-0.5){$\alpha_{4}$}
		\put(2.2,0){\line(1,0){1}}
		\put(3.2,0){\circle*{0.2}}
		\put(3.1,-0.5){$\alpha_{5}$}
		\put(4.2,0){\circle*{0.2}}
		\put(4,-0.5){$\alpha_{6}$}
		\put(3.2,0){\line(1,0){1}}
		\put(2.2,1){\circle*{0.2}}
		\put(2.1,1.3){$\alpha_{2}$}
		\put(2.2,0){\line(0,1){1}}
		\put(5.2,0){\circle*{0.2}}
		\put(4.2,0){\line(1,0){1}}
		\put(5.1,-0.5){$\alpha_{7}$}
		\put(.3,-1.5){Figure 3: Dynkin diagram of $E_{7}.$}
		\put(.3,-1.9){$\alpha_{0}=2\alpha_{1}+2\alpha_{2}+3\alpha_{3}+4\alpha_{4}+3\alpha_{5}+2\alpha_{6}+\alpha_{7}=\omega_{1}.$}
	\end{picture}
	
\pagebreak	
	\vspace*{1cm}
	\hspace{3.5cm}
	\setlength{\unitlength}{1.4cm}
	\begin{picture}(10,0)
		\thicklines
		\put(.2,0){\circle*{0.2}}
		\put(0,-0.5){$\alpha_{1}$}
		\put(1.2,0){\circle*{0.2}}
		\put(.2,0){\line(1,0){1}}
		\put(1.2,0){\line(1,0){1}}
		\put(1,-0.5){$\alpha_{3}$}
		\put(2.2,0){\circle*{0.2}}
		\put(2,-0.5){$\alpha_{4}$}
		\put(2.2,0){\line(1,0){1}}
		\put(3.2,0){\circle*{0.2}}
		\put(3.1,-0.5){$\alpha_{5}$}
		\put(4.2,0){\circle*{0.2}}
		\put(4,-0.5){$\alpha_{6}$}
		\put(3.2,0){\line(1,0){1}}
		\put(2.2,1){\circle*{0.2}}
		\put(2.1,1.3){$\alpha_{2}$}
		\put(2.2,0){\line(0,1){1}}
		\put(5.2,0){\circle*{0.2}}
		\put(4.2,0){\line(1,0){1}}
		\put(5.1,-0.5){$\alpha_{7}$}
		\put(6.2,0){\circle*{0.2}}
		\put(5.2,0){\line(1,0){1}}
		\put(6.1,-0.5){$\alpha_{8}$}
		\put(.5,-1.5){Figure 4: Dynkin diagram of $E_{8}.$}
		\put(.5,-1.9){$\alpha_{0}=2\alpha_{1}+3\alpha_{2}+4\alpha_{3}+6\alpha_{4}+5\alpha_{5}+4\alpha_{6}+3\alpha_{7}+2\alpha_{8}=\omega_{8}.$}
	\end{picture}
	
	\vspace{3cm}
	In type $D_{n},$  $E_{6},$ $E_{7},$ or $E_{8},$ we note that $\alpha_{0}$ is a non minuscule weight. We now prove
	\begin{lem}\label{lemma 3.1}
		Assume that $G$ is of type $D,$ or $E.$
		Then, we have $w_{0, S\setminus \{\alpha_{r}\}}(\alpha_{r})=\alpha_{0} -\alpha_{r},$ where $\alpha_{r}$ is the simple root such that $\alpha_{0}=\omega_{r}.$  
	\end{lem}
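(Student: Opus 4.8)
\emph{Proof plan.} Set $J:=S\setminus\{\alpha_r\}$ and write $\alpha_0=\sum_i c_i\alpha_i$. Since $\alpha_0=\omega_r$ and $G$ is simply-laced, one immediately gets $c_r=\langle\omega_r,\alpha_0\rangle=\langle\alpha_0,\alpha_0\rangle=2$ and $\langle\alpha_0,\alpha_j\rangle=\langle\omega_r,\alpha_j\rangle=\delta_{rj}$ for all $j$. Hence $\beta:=\alpha_0-\alpha_r=s_{\alpha_r}(\alpha_0)$ is a positive root, the coefficient of $\alpha_r$ in $\beta$ equals $c_r-1=1$, and for $j\in J$ we have $\langle\beta,\alpha_j\rangle=\langle\alpha_0,\alpha_j\rangle-\langle\alpha_r,\alpha_j\rangle=-\langle\alpha_r,\alpha_j\rangle\ge 0$, so $\beta$ is dominant for the Levi subgroup $L_J$ of $P_J$. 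The goal is to prove $w_{0,J}(\alpha_r)=\beta$.

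Next I would pass to the $\mathbb{Z}$-grading $\mathfrak g=\bigoplus_k\mathfrak g(k)$, where $\mathfrak g(k)$ is the $k$-eigenspace of $\mathrm{ad}\,h(\alpha_r)$; explicitly $\mathfrak g(k)=\bigoplus\mathfrak g_\mu$, the sum over roots $\mu$ whose $\alpha_r$-coefficient is $k$, and each $\mathfrak g(k)$ is an $L_J$-module because $[h(\alpha_r),\mathfrak g_{\pm\alpha_j}]=0$ for $j\in J$. Since $c_r=2$, the grading lives in degrees $-2,\dots,2$. Using only the two elementary facts that every positive root is $\le\alpha_0$ and that a root with a negative coordinate is negative, one checks inside $\mathfrak g(1)$, with $B_J:=B\cap L_J$: (a) $e_\beta$ is a $B_J$-highest weight vector, since for $\alpha\in R^+\cap\mathbb{Z}J$ the element $\beta+\alpha$ has some coordinate $c_j+n_j>c_j$ (with $\alpha=\sum n_j\alpha_j$, $n_j\ge0$ not all zero) and so is not a root; and (b) $e_{\alpha_r}$ is a $B_J$-lowest weight vector, since for such $\alpha$ the element $\alpha_r-\alpha$ has $\alpha_r$-coefficient $1>0$ but a strictly negative coordinate, hence is not a root. (One also records for free, from $\langle\alpha_0,\alpha_j\rangle=0$ for $j\in J$, that $\mathfrak g(2)=\mathfrak g_{\alpha_0}$ is one-dimensional: two roots of $\alpha_r$-coefficient $2$ differing by an element of $\mathbb{Z}J$ orthogonal to all $\alpha_j$, $j\in J$, must coincide.)

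Granting that $\mathfrak g(1)$ is an \emph{irreducible} $L_J$-module, the proof ends quickly: by (a) its highest weight is $\beta$, hence its (unique) $B_J$-lowest weight is $w_{0,J}(\beta)$; by (b) that lowest weight is $\alpha_r$; so $\alpha_r=w_{0,J}(\beta)$, and applying the involution $w_{0,J}$ gives $w_{0,J}(\alpha_r)=\beta=\alpha_0-\alpha_r$. Thus the only substantive point — and what I expect to be the main obstacle — is the irreducibility of $\mathfrak g(1)$. This is the standard fact that the grading attached to $G/P_r$ is of contact type; if one does not wish to quote it, it is verified case by case by deleting $\alpha_r$ from the Dynkin diagram and reading $\beta$ restricted to the semisimple part of $\mathfrak l_J$ off the first paragraph: one finds $\mathfrak g(1)\cong \mathbb{C}^2\boxtimes(\text{vector module of }D_{n-2})$ in type $D_n$, $\mathfrak g(1)\cong\Lambda^3(\mathbb{C}^6)$ of $A_5$ in type $E_6$, $\mathfrak g(1)\cong$ a half-spin module of $D_6$ in type $E_7$, and $\mathfrak g(1)\cong$ the $56$-dimensional module of $E_7$ in type $E_8$. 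Each is minuscule, hence irreducible, and its dimension equals the number of roots with $\alpha_r$-coefficient $1$, i.e.\ $\dim\mathfrak g(1)$; equivalently, the roots of $\alpha_r$-coefficient $1$ form a single $W_J$-orbit containing both $\alpha_r$ and $\beta$. A more computational alternative — writing down an explicit reduced word for $w_{0,J}$ in each type and applying it to $\alpha_r$ — also works, but is less transparent; everything else in the argument is just bookkeeping with the coefficient constraints on roots.
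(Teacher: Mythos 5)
Your proposal is correct, but it takes a genuinely different route from the paper. The paper works directly with the root $w_{0,S\setminus\{\alpha_r\}}(-\alpha_r)$: it first shows this root is antidominant for the Levi and pairs to $1$ against $\alpha_r$ (using that $\omega_r=\alpha_0$ is not minuscule), and then, by a type-by-type inspection of the neighbours of $\alpha_r$ in the Dynkin diagram together with the fact that $w_{0,S\setminus\{\alpha_r\}}$ sends those neighbours to their negatives, shows that $s_rw_{0,S\setminus\{\alpha_r\}}(-\alpha_r)$ is antidominant for all of $S$ and hence equals $-\alpha_0$. You instead recast the identity as the statement that $\alpha_r$ and $\beta=\alpha_0-\alpha_r$ are, respectively, the lowest and highest weights of the $L_J$-module $\mathfrak g(1)$ in the contact grading; your bookkeeping with highest and lowest weight vectors is uniform and correct, and the entire case-by-case burden is concentrated in the irreducibility of $\mathfrak g(1)$, which you either quote or verify by identifying the module in each type (your identifications and dimension counts are right). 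Your approach buys conceptual clarity and a link to known structure theory; the paper's buys self-containedness at the level of root combinatorics. One improvement worth recording: your step (b) already yields a uniform proof of irreducibility, with no case analysis and no appeal to the contact-grading literature. Indeed, if $e_\gamma\in\mathfrak g(1)$ is any $B_J$-lowest weight vector, then $\langle\gamma,\alpha_j\rangle\le 0$ for all $j\in J$, so writing $\gamma=\alpha_r+\sum_{j\in J}k_j\alpha_j$ with $k_j\ge 0$ and using that $G$ is simply-laced gives $2=\langle\gamma,\gamma\rangle=\langle\gamma,\alpha_r\rangle+\sum_{j\in J}k_j\langle\gamma,\alpha_j\rangle\le\langle\gamma,\alpha_r\rangle\le 2$, forcing $\gamma=\alpha_r$; since $\mathfrak g(1)$ is a semisimple $L_J$-module with one-dimensional weight spaces and a unique lowest-weight line, it is irreducible. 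With that observation your argument becomes entirely type-free, which the paper's is not.
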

	\begin{proof}
		Note that $-\alpha_{r}$ is $L_{S\setminus\{\alpha_{r}\}}$ dominant. Therefore, $w_{0, S\setminus\{\alpha_{r}\}}(-\alpha_{r})$ is $L_{S\setminus\{\alpha_{r}\}}$ negative dominant. 
		
		Next we claim that $\langle w_{0, S\setminus\{\alpha_{r}\}}(-\alpha_{r}), \alpha_{r}\rangle \ge 1.$ On the contrary, if $\langle w_{0, S\setminus\{\alpha_{r}\}}(-\alpha_{r}), \alpha_{r}\rangle \le 0,$ then $w_{0, S\setminus\{\alpha_{r}\}}(-\alpha_{r})$
		is negative dominant for $S.$ Since there is exactly one negative dominant root $-\alpha_{0}$ in the root system, we have $w_{0, S\setminus \{\alpha_{r}\}}(-\alpha_{r})=-\alpha_{0}.$ Therefore, by Lemma \ref{lem 2.3}, $\alpha_{0}=\omega_{r}$ is a minuscule, a contradiction. 
		
		Since $G$ is simply-laced, we have $\langle w_{0, S\setminus\{\alpha_{r}\}}(-\alpha_{r}), \alpha_{r}\rangle=1.$ Thus $\langle s_{r}w_{0, S\setminus\{\alpha_{r}\}}(-\alpha_{r}), \alpha_{r}\rangle=-1.$
		Next we prove the Lemma by studying case by case.
		
		\vspace{.1cm}
		Case I: $G$ is of type $D_{n}.$

		Then we have $\alpha_{0}=\omega_{2}.$ Since $\langle \alpha_{i},\alpha_{2}\rangle=0$ for $i\neq 1,2,3,$ and $w_{0,S\setminus\{\alpha_{2}\}}(-\alpha_{2})$ is $L_{S\setminus\{\alpha_{2}\}}$ negative dominant, we have $\langle s_{2}w_{0, S\setminus\{\alpha_{2}\}}(-\alpha_{2}), \alpha_{i}\rangle\le 0$ for $i\neq 1,3.$ 
		
		Further, $\langle s_{2}w_{0, S\setminus\{\alpha_{2}\}}(-\alpha_{2}), \alpha_{i}\rangle=\langle w_{0, S\setminus\{\alpha_{2}\}}(-\alpha_{2}), \alpha_{2}+\alpha_{i}\rangle$ for $i=1,3.$ By the above discussion we have $\langle w_{0, S\setminus\{\alpha_{2}\}}(-\alpha_{2}), \alpha_{2}\rangle=1.$ Moreover, since  $w_{0,S\setminus \{\alpha_{2}\}}(\alpha_{i})=-\alpha_{i}$ for $i=1,3$, we have $\langle s_{2}w_{0,S\setminus\{\alpha_{2}\}}(-\alpha_{2}), \alpha_{i}\rangle=0$ for $i=1,3.$ Thus $s_{2}w_{0,S\setminus\{\alpha_{2}\}}(-\alpha_{2})$ is a negative dominant for $S.$ Therefore, we have $s_{2}w_{0,S\setminus\{\alpha_{2}\}}(-\alpha_{2})=-\alpha_{0}.$ So, we have $w_{0,S\setminus\{\alpha_{2}\}}(\alpha_{2})=\alpha_{0}-\alpha_{2}.$
		
		\vspace{.2cm}
		
		Case II: $G$ is of type $E_{6}.$

		Then we have $\alpha_{0}=\omega_{2}.$ Since $\langle \alpha_{i},\alpha_{2}\rangle=0$ for $i\neq 2, 4,$ and $w_{0,S\setminus\{\alpha_{2}\}}(-\alpha_{2})$ is $L_{S\setminus \{\alpha_{2}\}}$ negative dominant, we have $\langle s_{2}w_{0, S\setminus\{\alpha_{2}\}}(-\alpha_{2}), \alpha_{i}\rangle\le 0$ for $i\neq 2,4.$ 
		
		We note that $\langle s_{2}w_{0, S\setminus\{\alpha_{2}\}}(-\alpha_{2}), \alpha_{4}\rangle=\langle w_{0, S\setminus\{\alpha_{2}\}}(-\alpha_{2}), \alpha_{2}+\alpha_{4}\rangle.$ By the above discussion we have $\langle w_{0, S\setminus\{\alpha_{2}\}}(-\alpha_{2}), \alpha_{2}\rangle=1.$ Since $w_{0,S\setminus \{\alpha_{2}\}}(\alpha_{4})=-\alpha_{4}$,  $\langle s_{2}w_{0,S\setminus\{\alpha_{2}\}}(-\alpha_{2}), \alpha_{4}\rangle=0.$

		Thus $s_{2}w_{0,S\setminus\{\alpha_{2}\}}(-\alpha_{2})$ is a negative dominant for $S.$ Therefore,  $s_{2}w_{0,S\setminus\{\alpha_{2}\}}(-\alpha_{2})=-\alpha_{0}.$ So, we have $w_{0,S\setminus\{\alpha_{2}\}}(\alpha_{2})=\alpha_{0}-\alpha_{2}.$

		\vspace{.1cm}
		Case III: $G$ is of type $E_{7}.$\\
		Then we have $\alpha_{0}=\omega_{1}.$ Since $\langle \alpha_{i},\alpha_{1}\rangle=0$ for $i\neq 1,3,$  and $w_{0,S\setminus \{\alpha_{1}\}}(-\alpha_{1})$ is $L_{S\setminus \{\alpha_{1}\}}$ negative dominant, we have $\langle s_{\alpha_{1}}w_{0, S\setminus\{\alpha_{1}\}}(-\alpha_{1}), \alpha_{i}\rangle\le 0$ for $i\neq 1,3.$  By the above discussion we have $\langle w_{0, S\setminus\{\alpha_{1}\}}(-\alpha_{1}), \alpha_{1}\rangle=1.$ Therefore,  $\langle s_{1}w_{0, S\setminus\{\alpha_{1}\}}(-\alpha_{1}), \alpha_{1}\rangle=-1.$ On the other hand, $\langle s_{1}w_{0, S\setminus\{\alpha_{1}\}}(-\alpha_{1}), \alpha_{3}\rangle=\langle w_{0, S\setminus\{\alpha_{1}\}}(-\alpha_{1}), \alpha_{1}+\alpha_{3}\rangle.$ Since  $w_{0,S\setminus\{\alpha_{1}\}}(\alpha_{3})=-\alpha_{3}$, we have $\langle s_{1}w_{0,S\setminus\{\alpha_{1}\}}(-\alpha_{1}), \alpha_{3}\rangle=0.$

		Thus $s_{1}w_{0,S\setminus\{\alpha_{1}\}}(-\alpha_{1})$ is a negative dominant for $S.$ Therefore,  $s_{1}w_{0,S\setminus\{\alpha_{1}\}}(-\alpha_{1})=-\alpha_{0}.$ So, we have $w_{0,S\setminus\{\alpha_{1}\}}(\alpha_{1})=\alpha_{0}-\alpha_{1}.$
		
		\vspace{.1cm}
		Case IV: $G$ is of type $E_{8}.$

		Then we have $\alpha_{0}=\omega_{8}.$ Since $\langle \alpha_{i},\alpha_{8}\rangle=0$ for $i\neq 7,8,$ and $w_{0,S\setminus\{\alpha_{8}\}}(-\alpha_{8})$ is $L_{S\setminus\{\alpha_{8}\}}$ negative dominant, we have $\langle s_{8}w_{0, S\setminus\{\alpha_{8}\}}(-\alpha_{8}), \alpha_{i}\rangle\le 0$ for $i\neq 7,8.$ 
		
		Further, $\langle s_{8}w_{0, S\setminus\{\alpha_{8}\}}(-\alpha_{8}), \alpha_{7}\rangle=\langle w_{0, S\setminus\{\alpha_{8}\}}(-\alpha_{8}), \alpha_{8}+\alpha_{7}\rangle.$ By the above discussion we have $\langle w_{0, S\setminus\{\alpha_{8}\}}(-\alpha_{8}), \alpha_{8}\rangle=1.$ 
		
		Moreover, since  $w_{0,S\setminus \{\alpha_{8}\}}(\alpha_{7})=-\alpha_{7}$, we have  $\langle s_{8}w_{0,S\setminus\{\alpha_{8}\}}(-\alpha_{8}), \alpha_{7}\rangle=0.$ \newline Thus $s_{8}w_{0,S\setminus\{\alpha_{8}\}}(-\alpha_{8})$ is a negative dominant for $S.$ Therefore,  $s_{8}w_{0,S\setminus\{\alpha_{8}\}}(-\alpha_{8})=-\alpha_{0}.$ So, we have $w_{0,S\setminus\{\alpha_{8}\}}(\alpha_{8})=\alpha_{0}-\alpha_{8}.$
		
	\end{proof}
	
	\section{$G$ is of type $D_{n}(n\ge 4)$} 
	In this section, we prove the following Proposition:
	\begin{prop}\label{Prop 6.1}
		Assume that $\omega_{i}$ is non minuscule.
		Then there exists a Schubert variety  $X_{P_{i}}(w_{i})$ in $G/P_{i}$ such that $P_{i}=Aut^0(X_{P_{i}}(w_{i})).$ 
	\end{prop}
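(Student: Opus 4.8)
The proof is an application of Theorem \ref{thm1}. Recall that $P_i=P_{\alpha_i}$ is the minimal parabolic subgroup, so $G/P_i$ is the partial flag variety attached to the one-element subset $\{\alpha_i\}\subseteq S$ and its Schubert varieties are indexed by $w\in W^{\{\alpha_i\}}=\{w\in W:\ w(\alpha_i)>0\}$; write $D_L(w)=\{\alpha_j\in S:\ s_jw<w\}$ for the left descent set of $w$. By Theorem \ref{thm1} applied with $P=P_i$, the group $Aut^{0}(X_{P_i}(w))$ equals the stabilizer $P_w$ of $X_{P_i}(w)$ in $G$ exactly when $w^{-1}(\alpha_0)<0$. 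Moreover $P_w$ is the standard parabolic subgroup generated by $B$ together with the simple reflections that fix $X_{P_i}(w)$, and a direct check with the Bruhat decomposition shows that $s_j$ fixes $X_{P_i}(w)$ if and only if $s_jw<w$ or $w(\alpha_i)=\alpha_j$. Hence $P_w=P_i$ holds precisely when $D_L(w)=\{\alpha_i\}$ and $w(\alpha_i)\notin S$ (the remaining possibility $w(\alpha_i)=\alpha_i$ forcing $w=e$, which is excluded by $w^{-1}(\alpha_0)<0$). Since in type $D_n$ the non-minuscule fundamental weights are $\omega_2,\dots,\omega_{n-2}$ (Table 1) and $\alpha_0=\omega_2$ (Figure 1), the Proposition reduces to: for each $i$ with $2\le i\le n-2$ there is an element $w_i\in W^{\{\alpha_i\}}$ with $D_L(w_i)=\{\alpha_i\}$, $w_i(\alpha_i)\notin S$ and $w_i^{-1}(\alpha_0)<0$; then $X_{P_i}(w_i)$ is the desired Schubert variety.

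To produce $w_i$ I would start from the elements of Corollary \ref{corollary 2.1}. For $v_{\alpha_i}=u_{\alpha_i}s_{\alpha_i}$ (length $\mathrm{ht}(\alpha_0)$) one has $v_{\alpha_i}^{-1}(\alpha_0)=s_{\alpha_i}(\alpha_i)=-\alpha_i<0$, and, using that $\alpha_0$ is the maximum of the root poset and that $u_{\alpha_i}$ has right descent set exactly the set of simple roots adjacent to $\alpha_i$ (a consequence of the minimality of $u_{\alpha_i}$), one checks that $v_{\alpha_i}$ has right descent set $\{\alpha_i\}$ and, for $i\ne 2$, does not have $\alpha_i$ as a left descent. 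However $v_{\alpha_i}$ (or its inverse) need not satisfy all three requirements simultaneously — in particular $w_i^{-1}(\alpha_0)<0$ generally fails for $v_{\alpha_i}^{-1}$, and $w_i(\alpha_i)$ may turn out to be simple — so $w_i$ has to be obtained by a controlled modification, composing $v_{\alpha_i}$ on one side with a short word running along the branch of the $D_n$ diagram from $\alpha_i$ towards the fork $\{\alpha_{n-1},\alpha_n\}$; the precise modification depends on the position of $i$ along that branch and on the parity of $n$ (it is when $n$ is odd that $-w_0$ interchanges $\alpha_{n-1}$ and $\alpha_n$), and $i=2$ needs separate treatment since there $\langle\alpha_0,\alpha_2\rangle=1$. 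One then verifies $w_i\in W^{\{\alpha_i\}}$, $D_L(w_i)=\{\alpha_i\}$, $w_i(\alpha_i)\notin S$ and $w_i^{-1}(\alpha_0)<0$ by explicit root computations in $D_n$, using the expression $\alpha_0=\omega_2$ and Lemma \ref{lemma 3.1}, after which Theorem \ref{thm1} gives $P_i=P_{w_i}=Aut^{0}(X_{P_i}(w_i))$.

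The main obstacle is this verification, and in particular the requirement that the stabilizer be \emph{exactly} the minimal parabolic $P_i$ and not a larger standard parabolic: the condition $w^{-1}(\alpha_0)<0$ alone is satisfied by a vast collection of elements — for instance by the longest element $w_0^{\{\alpha_i\}}$ of $W^{\{\alpha_i\}}$, for which $X_{P_i}(w_0^{\{\alpha_i\}})=G/P_i$ has automorphism group $G$ — so $w_i$ must be chosen essentially minimal subject to $\alpha_0$ being an inversion of $w_i^{-1}$, and then one must simultaneously rule out every additional left descent and any appearance of a simple root among the values $w_i(\alpha_i)$. Handling this uniformly for all $i$ in the range $2\le i\le n-2$, over both parities of $n$ and with the exceptional behaviour at $\alpha_2$, is the combinatorial heart of the argument; each case is dealt with by direct computation in the root system of type $D_n$.
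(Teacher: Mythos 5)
Your reduction is sound and is essentially the one the paper uses: by Theorem \ref{thm1} it suffices to exhibit, for each $2\le i\le n-2$, an element $w_i\in W^{\{\alpha_i\}}$ whose only left descent is $s_i$, with $w_i(\alpha_i)$ a non-simple positive root and $w_i^{-1}(\alpha_0)<0$; your description of the stabilizer of $X_{P_i}(w)$ via left descents and the coset condition $w(\alpha_i)=\alpha_j$ is correct. But the proposal stops exactly where the proof begins. You never produce the elements $w_i$: the passage ``$w_i$ has to be obtained by a controlled modification, composing $v_{\alpha_i}$ on one side with a short word running along the branch of the $D_n$ diagram'' is a placeholder, not a construction, and you yourself concede that the verification of the three conditions is ``the combinatorial heart of the argument.'' That heart is the entire content of the paper's Section 5 (Lemmas \ref{lemma 6.2}, \ref{lemma 6.3}, \ref{lemma 6.5}), so as it stands the proposal has a genuine gap.

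Two further points suggest the sketched construction would not go through as described. First, the needed modification is not ``short'': the paper's elements are $w_2=u_2s_1$, $w_3=u_3v_2$ and, for $4\le i\le n-2$, $w_i=u_i(s_{i-1}u_i)\cdots(s_3\cdots s_{i-1}u_i)\,v_{i-1}$ with $u_i=s_i\cdots s_{n-2}s_{n-1}s_ns_{n-2}\cdots s_i$ — a product of $i-2$ conjugated copies of $u_i$ prepended to $v_{i-1}$ (note: the minimal element sending $\alpha_0$ to $-\alpha_{i-1}$, not to $-\alpha_i$), and the key point making $w_i^{-1}(\alpha_0)<0$ work is that the long prefix fixes $\alpha_0=\omega_2$ because $s_2$ does not occur below it in Bruhat order. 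Second, the paper's construction is uniform in $n$; the parity of $n$ (and the action of $-w_0$ on $\{\alpha_{n-1},\alpha_n\}$) plays no role, so the case split you anticipate is a symptom of heading toward a different, and likely more cumbersome, family of candidates. Also, as a small correction, $w(\alpha_i)=\alpha_i$ does not force $w=e$; what one actually needs is $w_i(\alpha_i)\notin S$, which the paper secures by showing $w_i(\alpha_i)=\alpha_1+\cdots+\alpha_i$.
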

	We recall that there exists a unique element of minimal length $v_{i}$ in $W$ such that $v_{i}^{-1}(\alpha_{0})=-\alpha_{i}$ for all $1\le i\le n$ (see Corollary \ref{corollary 2.1}).
	
	For all $1\le i\le n-2,$  let $u_{i}=s_{i}s_{i+1}\cdots s_{n-2}s_{n-1}s_{n}s_{n-2}s_{n-3}\cdots s_{i+1}s_{i}.$ Note that $u_{i}$ is self inverse i.e., $u_{i}^{-1}=u_{i}.$\\
	Let $w_{2}=u_{2}s_{1},$ $w_{3}=u_{3}v_{2},$ and $w_{i}=u_{i}(s_{i-1}u_{i})\cdots(s_{3}\cdots s_{i-1}u_{i})v_{i-1}$ for all $4\le i\le n-2.$
	
	\begin{lem}\label{lemma 6.2} 
		For  $1\le i\le n-2,$ let $u_{i}$ be as above. Then we have the following: 
		\begin{enumerate}
			\item [(1)]  $u_{i}(\alpha_{j})=\alpha_{j}$ for all $1\le j\le i-2.$
			\item[(2)] $(i).$ $u_{i}(\alpha_{i-1})=\alpha_{i-1}+2\alpha_{i}+2\alpha_{i+1}+\cdots+2\alpha_{n-2}+\alpha_{n-1}+\alpha_{n}$ for all $2\le i\le n-2.$\\ $(ii).$ $u_{i}(\alpha_{i})=-(\alpha_{i}+2\alpha_{i+1}+\cdots+2\alpha_{n-2}+\alpha_{n-1}+\alpha_{n}),$ for all $1\le i\le n-2.$ \\ In particular, we have
			$u_{i}(\alpha_{i-1}+\alpha_{i})=\alpha_{i-1}+\alpha_{i}$ for all $2\le i\le n-2.$
			\item[(3)]$(i).$ $u_{i}(\alpha_{j})=\alpha_{j}$ for all $i+1\le j\le n-2.$\\
			$(ii).$ $u_{i}(\alpha_{n-1})=\alpha_{n},$ and $u_{i}(\alpha_{n})=\alpha_{n-1}$ for all $1\le i\le n-2.$ In particular, we have $u_{i}(\alpha_{n-1}+\alpha_{n})=\alpha_{n-1}+\alpha_{n}$ for all $1\le i\le n-2.$
		\end{enumerate}
	\end{lem}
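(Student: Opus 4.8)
The plan is to exploit the palindromic shape of the word defining $u_i$. Writing $v = s_i s_{i+1}\cdots s_{n-2}$, the definition of $u_i$ rearranges as $u_i = v\,(s_{n-1}s_n)\,v^{-1}$. Since $\alpha_{n-1}$ and $\alpha_n$ are non-adjacent in the Dynkin diagram of $D_n$, $s_{n-1}$ and $s_n$ commute, so $u_i = (vs_{n-1}v^{-1})(vs_nv^{-1}) = s_{\gamma}\,s_{\delta}$, where $\gamma := v(\alpha_{n-1})$ and $\delta := v(\alpha_n)$; moreover $\langle\gamma,\delta\rangle = \langle\alpha_{n-1},\alpha_n\rangle = 0$, so the two reflections commute (this also re-proves $u_i^{-1}=u_i$). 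This reduces everything to a single explicit element.

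First I would compute $\gamma$ and $\delta$ by a telescoping argument: applying $s_{n-2},s_{n-3},\dots,s_i$ in turn to $\alpha_{n-1}$ (resp.\ to $\alpha_n$), and using $\langle\alpha_{n-1},\alpha_{n-2}\rangle=\langle\alpha_n,\alpha_{n-2}\rangle=-1$ together with $\langle\alpha_k,\alpha_{k-1}\rangle=-1$, one finds
$$\gamma = \alpha_i + \alpha_{i+1} + \cdots + \alpha_{n-1}, \qquad \delta = \alpha_i + \alpha_{i+1} + \cdots + \alpha_{n-2} + \alpha_n .$$
Both are roots of the same length as the simple roots, and $\gamma\perp\delta$, so $s_\gamma s_\delta$ fixes $(\mathbb{C}\gamma\oplus\mathbb{C}\delta)^{\perp}$ pointwise and acts by $-1$ on $\mathbb{C}\gamma\oplus\mathbb{C}\delta$; equivalently
$$u_i(\mu) = \mu - \langle\mu,\gamma\rangle\,\gamma - \langle\mu,\delta\rangle\,\delta \qquad \text{for every weight } \mu .$$

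The remaining step is to read off the pairings $\langle\alpha_j,\gamma\rangle,\langle\alpha_j,\delta\rangle$ from the shapes of the supports, which are the paths $\alpha_i-\alpha_{i+1}-\cdots-\alpha_{n-2}-\alpha_{n-1}$ and $\alpha_i-\alpha_{i+1}-\cdots-\alpha_{n-2}-\alpha_n$ respectively. One obtains: $\langle\alpha_j,\gamma\rangle=\langle\alpha_j,\delta\rangle=0$ for $j\le i-2$; $\langle\alpha_{i-1},\gamma\rangle=\langle\alpha_{i-1},\delta\rangle=-1$ (when $i\ge 2$); $\langle\alpha_i,\gamma\rangle=\langle\alpha_i,\delta\rangle=1$; $\langle\alpha_j,\gamma\rangle=\langle\alpha_j,\delta\rangle=0$ for $i+1\le j\le n-2$; $\langle\alpha_{n-1},\gamma\rangle=1,\ \langle\alpha_{n-1},\delta\rangle=-1$; and $\langle\alpha_n,\gamma\rangle=-1,\ \langle\alpha_n,\delta\rangle=1$. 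Substituting these into the displayed formula and using $\gamma+\delta = 2\alpha_i+2\alpha_{i+1}+\cdots+2\alpha_{n-2}+\alpha_{n-1}+\alpha_n$ and $\delta-\gamma = \alpha_n-\alpha_{n-1}$ yields (1), (2)(i), (2)(ii), (3)(i) and (3)(ii) simultaneously; the ``in particular'' claims follow by adding the relevant outputs, the $\gamma+\delta$ contributions to $u_i(\alpha_{i-1})$ and $u_i(\alpha_i)$ cancelling.

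There is no genuine obstacle here—the content is a finite computation—so the only care needed is the bookkeeping around the fork of the $D_n$ diagram (the vertices $\alpha_{n-2},\alpha_{n-1},\alpha_n$, with $\alpha_{n-1}$ and $\alpha_n$ non-adjacent) and the degenerate cases $i=n-2$, where the chains between $i$ and $n-2$ are empty, and $i=2$ in part (2)(i). Alternatively one may bypass the root-theoretic pairings entirely by working in the standard model $\alpha_k = e_k-e_{k+1}$ $(1\le k\le n-1)$, $\alpha_n = e_{n-1}+e_n$, in which $\gamma = e_i - e_n$, $\delta = e_i + e_n$, so that $u_i$ is the sign change $e_i\mapsto -e_i$, $e_n\mapsto -e_n$ fixing every other $e_k$; each stated identity is then immediate.
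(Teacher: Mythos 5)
Your proof is correct, and it takes a genuinely different (and more structural) route than the paper. The paper argues piecemeal: part (1) from the orthogonality of $\alpha_j$ with every $\alpha_k$, $i\le k\le n$; part (3)(i) via the conjugation identity $u_i=s_i\cdots s_j\,u_{j+1}\,s_j\cdots s_i$ reducing to part (1); and parts (2) and (3)(ii) are dismissed as ``the usual calculation,'' i.e.\ unrolling the reduced word one simple reflection at a time. You instead observe that the palindromic word gives $u_i=v(s_{n-1}s_n)v^{-1}=s_\gamma s_\delta$ with $\gamma=v(\alpha_{n-1})=\alpha_i+\cdots+\alpha_{n-1}=e_i-e_n$ and $\delta=v(\alpha_n)=\alpha_i+\cdots+\alpha_{n-2}+\alpha_n=e_i+e_n$ orthogonal, so that $u_i(\mu)=\mu-\langle\mu,\gamma\rangle\gamma-\langle\mu,\delta\rangle\delta$ is the sign change $e_i\mapsto-e_i$, $e_n\mapsto-e_n$; all five assertions (and the involutivity $u_i^{-1}=u_i$, which the paper merely asserts) then drop out of one table of pairings, which I have checked and which are all correct, including the boundary case $i=n-2$ where the intermediate chain is empty. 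What your approach buys is uniformity and an actual verification of the parts the paper leaves implicit; what the paper's approach buys is that it stays entirely inside the language of simple reflections and supports without invoking the reflection formula for non-simple roots or the coordinate model of $D_n$. Either argument is complete; yours is the more economical of the two.
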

	\begin{proof}
		Proof of $(1)$: Since $1\le i\le n-2,$ and $1\le j\le i-2,$ we have $\langle \alpha_{j}, \alpha_{k}\rangle=0$ for all $i\le k\le n.$ Therefore,	$u_{i}(\alpha_{j})=s_{i}s_{i+1}\cdots s_{n-2}s_{n-1}s_{n}s_{n-2}s_{n-3}\cdots s_{i}(\alpha_{j})=\alpha_{j}.$
		
		Proof of $(2)$: Follows from the usual calculation using the description of $u_{i}.$  
		
		Proof of $(3)$: We note that $u_{i}=s_{i}\cdots s_{j}u_{j+1}s_{j}s_{j-1}\cdots s_{i}$ for all $i+1\le j\le n-3.$ Since $1\le i\le n-2,$ and $ i+1\le j\le n-2,$ we have
		$s_{j}s_{j-1}\cdots s_{i}(\alpha_{j})=\alpha_{j-1}.$ By $(1)$ we have $u_{j+1}(\alpha_{j-1})=\alpha_{j-1}.$ Further, since $s_{j}s_{j-1}\cdots s_{i}(\alpha_{j})=\alpha_{j-1},$ we have $s_{i}\cdots s_{j}(\alpha_{j-1})=\alpha_{j}.$ Therefore, proof of (3)(i) follows.
		
		Proof of (3)(ii) follows from the usual calculation.
	\end{proof}
	\begin{lem}\label{lemma 6.3}
		Let $v_{i}$ be as above for $1\le i\le n-3$. Then we have the following
		\begin{enumerate}
			\item [(1)] $v_{i}=(s_{2}s_{3}\cdots s_{n-2}s_{n-1}s_{n}s_{n-2}s_{n-3}\cdots s_{i+1})(s_{1}\cdots s_{i})$ for all $1\le i\le n-3.$
			
			\item[(2)] $v_{i}(\alpha_{i+1})=\alpha_{1}+\alpha_{2}+\cdots +\alpha_{i}+\alpha_{i+1}$ for all $1\le i\le n-3.$

			\item[(3)] $w_{i+1}{(\alpha_{i+1})}=\alpha_{1}+\alpha_{2}+\cdots +\alpha_{i}+\alpha_{i+1}$ for all $1\le i\le n-3.$\\ In particular, $w_{i+1}(\alpha_{i+1})$ is a non simple positive root for $1\le i\le n-3.$
		\end{enumerate}
	\end{lem}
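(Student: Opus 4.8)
The plan is to establish (1) first, and then read off (2) and (3) from it together with Lemma~\ref{lemma 6.2}. Set $x_i:=(s_2s_3\cdots s_{n-2}s_{n-1}s_n s_{n-2}s_{n-3}\cdots s_{i+1})(s_1 s_2\cdots s_i)$, the claimed expression. Counting simple reflections, $\ell(x_i)\le(n-3)+2+(n-i-2)+i=2n-3$; since $\alpha_0=\omega_2$ has height $2n-3$ in type $D_n$, Corollary~\ref{corollary 2.1} gives $\ell(v_i)=2n-3$ as well. Hence it suffices to verify $x_i^{-1}(\alpha_0)=-\alpha_i$: then $x_i$ has the defining property of $v_i$ and minimal possible length, so the uniqueness of $v_i$ forces $x_i=v_i$ (and the displayed word is automatically reduced).

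To verify $x_i^{-1}(\alpha_0)=-\alpha_i$, equivalently $x_i(\alpha_i)=-\alpha_0$, I would apply the word of $x_i$ to $\alpha_i$ right-to-left. The block $s_1\cdots s_i$ carries $\alpha_i$ to $-(\alpha_1+\cdots+\alpha_i)$; the reflections $s_{i+1},s_{i+2},\dots,s_{n-2}$ then append $-\alpha_{i+1},\dots,-\alpha_{n-2}$ one at a time (each pairing being $1$); $s_n$ and $s_{n-1}$ append $-\alpha_n$ and $-\alpha_{n-1}$; and the descending run $s_{n-2},s_{n-3},\dots,s_2$ raises the coefficient of $\alpha_{n-2},\alpha_{n-3},\dots,\alpha_2$ to $2$ one at a time, producing $-(\alpha_1+2\alpha_2+\cdots+2\alpha_{n-2}+\alpha_{n-1}+\alpha_n)=-\alpha_0$. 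The only places needing a moment's care are the three reflections at the fork $\alpha_{n-2},\alpha_{n-1},\alpha_n$, but the relevant pairings remain $\pm1$.

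For (2), apply the word of $v_i$ from (1) to $\alpha_{i+1}$. The factor $s_1\cdots s_i$ sends $\alpha_{i+1}$ to $\alpha_1+\cdots+\alpha_{i+1}$. In the remaining word, the rightmost letter $s_{i+1}$ removes $\alpha_{i+1}$ (the pairing is $1$); every subsequent letter of index $\ge i+2$ fixes $\alpha_1+\cdots+\alpha_i$ (orthogonality of the simple root to that weight); the occurrence of $s_{i+1}$ in the descending run puts $\alpha_{i+1}$ back; and the final letters $s_i,s_{i-1},\dots,s_2$ each fix $\alpha_1+\cdots+\alpha_{i+1}$, since $\langle\alpha_1+\cdots+\alpha_{i+1},\alpha_j\rangle=-1+2-1=0$ for $2\le j\le i$. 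This gives $v_i(\alpha_{i+1})=\alpha_1+\cdots+\alpha_{i+1}$; the extreme case $i=n-3$, where $\alpha_{i+1}$ is the trivalent node and $s_{n-2}$ occurs twice, is handled by the same bookkeeping and checked separately.

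For (3), note $w_{i+1}=P_{i+1}\,v_i$, where $P_{i+1}$ equals $u_{i+1}$ (if $i=2$), or a product of copies of $u_{i+1}$ and of simple reflections $s_j$ with $3\le j\le i$ (if $i\ge3$); the case $i=1$ is $w_2=u_2s_1$ and follows directly from Lemma~\ref{lemma 6.2}(2), since $u_2(\alpha_1+\alpha_2)=\alpha_1+\alpha_2$. Now $u_{i+1}$ fixes $\alpha_1+\cdots+\alpha_{i+1}$: by Lemma~\ref{lemma 6.2}(1) it fixes $\alpha_1,\dots,\alpha_{i-1}$, and by Lemma~\ref{lemma 6.2}(2) it fixes $\alpha_i+\alpha_{i+1}$; and $s_j$ with $3\le j\le i$ fixes $\alpha_1+\cdots+\alpha_{i+1}$ because $\langle\alpha_1+\cdots+\alpha_{i+1},\alpha_j\rangle=0$. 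Hence $P_{i+1}$ fixes $\alpha_1+\cdots+\alpha_{i+1}$, so by (2) $w_{i+1}(\alpha_{i+1})=P_{i+1}(\alpha_1+\cdots+\alpha_{i+1})=\alpha_1+\cdots+\alpha_{i+1}$, which is the highest root of the $A_{i+1}$-subsystem on $\alpha_1,\dots,\alpha_{i+1}$ (a sub-path of the diagram, since $i+1\le n-2$), hence a positive non-simple root. The main obstacle throughout is the explicit bookkeeping in (1) and (2): tracking the running weight across the two occurrences of $s_{n-2}$ and of $s_{i+1}$ and through the fork of the Dynkin diagram; once (1) is settled, (2) and (3) are immediate.
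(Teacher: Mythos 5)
Your proof is correct and follows essentially the same route as the paper: identify the word for $v_{i}$ via the uniqueness of the minimal-length element sending $\alpha_{0}$ to $-\alpha_{i}$ (Proposition \ref{Prop 2.1} and Corollary \ref{corollary 2.1}), compute $v_{i}(\alpha_{i+1})$ by a direct right-to-left calculation, and deduce part (3) from part (2) using the invariance statements of Lemma \ref{lemma 6.2}. If anything, your write-up is slightly more careful than the paper's, which leaves the reduced-word check and the base case $w_{2}=u_{2}s_{1}$ of part (3) implicit; the only cosmetic issue is that your auxiliary symbol $P_{i+1}$ collides with the paper's notation for minimal parabolic subgroups.
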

	\begin{proof}
		Proof of $(1)$:
		Let ${v'_{i}}=(s_{2}s_{3}\cdots s_{n-2}s_{n-1}s_{n}s_{n-2}s_{n-3}\cdots s_{i+1})(s_{1}\cdots s_{i})$ for all $1\le i\le n-3.$ Then we note that $(s_{2}s_{3}\cdots s_{n-2}s_{n-1}s_{n}s_{n-2}s_{n-3}\cdots s_{i+1})(s_{1}\cdots s_{i})$ is a reduced expression $v'_{i}.$ 
		Further, we observe that ${v'_{i}}^{-1}(\alpha_{0})=-\alpha_{2}.$ Since $\ell({v'}_{2})=\ell(v_{2}),$ by Proposition \ref{Prop 2.1} we have $v_{2}=v'_{2}.$
		
		Proof of $(2)$: Follows from the usual calculation using the description of $v_{i}$ as in $(1).$
		
		Proof of $(3)$: Note that $w_{i+1}=u_{i+1}(s_{i}u_{i+1})\cdots(s_{3}\cdots s_{i}u_{i+1})v_{i}.$ By $(2)$ we have $v_{i}(\alpha_{i+1})=\alpha_{1}+\alpha_{2}+\cdots+\alpha_{i}+\alpha_{i+1}.$ By using Lemma \ref{lemma 6.2} $(1), (2)$  we have $u_{i+1}(\alpha_{1}+\alpha_{2}+\cdots+\alpha_{i}+\alpha_{i+1})=\alpha_{1}+\alpha_{2}+\cdots+\alpha_{i}+\alpha_{i+1}.$ Since $\alpha_{1}+\alpha_{2}+\cdots+\alpha_{i}+\alpha_{i+1}$ is orthogonal to $\alpha_{k}$ for all $3\le k\le i,$ we have $s_{3}\cdots s_{i}(\alpha_{1}+\alpha_{2}+\cdots+\alpha_{i}+\alpha_{i+1})=\alpha_{1}+\alpha_{2}+\cdots+\alpha_{i}+\alpha_{i+1}.$ Similarly, we have $s_{l}\cdots s_{i}(\alpha_{1}+\alpha_{2}+\cdots+\alpha_{i}+\alpha_{i+1})=\alpha_{1}+\alpha_{2}+\cdots+\alpha_{i}+\alpha_{i+1}$ for all $3 \le l\le i.$ Therefore, we have $w_{i}(\alpha_{1}+\alpha_{2}+\cdots+\alpha_{i}+\alpha_{i+1})=\alpha_{1}+\alpha_{2}+\cdots+\alpha_{i}+\alpha_{i+1}.$
		
	\end{proof}
	Recall that $v_{i}=(s_{2}\cdots s_{n-2}s_{n-1}s_{n}s_{n-2}\cdots s_{i+1})s_{1}\cdots s_{i}$ for all $1\le i\le n-3.$
	\begin{lem}\label{lemma 6.5}
		Then $v_{i}$ satisfies the following 
		\begin{enumerate}
			\item[(1)] $v_{i}^{-1}(\alpha_{1})=\alpha_{i}+2\alpha_{i+1}+\cdots +2\alpha_{n-2}+\alpha_{n-1}+\alpha_{n}$ for all $1\le i\le n-3.$
			\vspace*{.1cm}
			\item[(2)] $(i).$ $v_{1}^{-1}(\alpha_{2})=-(\alpha_{1}+2\alpha_{2}+\cdots +2\alpha_{n-2}+\alpha_{n-1}+\alpha_{n}).$\\
			$(ii).$ $v_{i}^{-1}(\alpha_{2})=-(\alpha_{1}+\alpha_{2}+\cdots +2\alpha_{i}+\cdots+2\alpha_{n-2}+\alpha_{n-1}+\alpha_{n})$ for all $2\le i\le n-3.$
			\item[(3)]	
			$v_{i}^{-1}(\alpha_{j})=\alpha_{j-2}$ for all $1\le i\le n-3,$ and $3\le j\le i.$
			
			\item[(4)]  $v_{i}^{-1}(\alpha_{i+1})=\alpha_{i-1}+\alpha_{i}+\alpha_{i+1}$ for all $2\le i\le n-3.$ 
			
			\item [(5)]	
			$(i).$ $v_{i}^{-1}(\alpha_{j})=\alpha_{j}$ for all $1\le i\le n-3,$ and  $i+2\le j\le n-2.$\\
			$(ii).$ $v_{i}^{-1}(\alpha_{n-1})=\alpha_{n},$ and 
			$v_{i}^{-1}(\alpha_{n})=\alpha_{n-1}$ for all $1\le i\le n-3.$ In particular, we have $v_{i}^{-1}(\alpha_{n-1}+\alpha_{n})=\alpha_{n-1}+\alpha_{n}$ for all $1\le i\le n-3.$
		\end{enumerate}
		
	\end{lem}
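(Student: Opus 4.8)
The plan is to work from the reduced word
\[
v_i=(s_2s_3\cdots s_{n-2}s_{n-1}s_ns_{n-2}s_{n-3}\cdots s_{i+1})(s_1\cdots s_i)
\]
supplied by Lemma \ref{lemma 6.3}(1), and to prove the five identities \emph{together} by induction on $i$, since they feed one another. The main structural input is a recursion relating consecutive $v_i$'s. Writing $v_i=a_ib_i$ with $a_i=s_2s_3\cdots s_{n-2}s_{n-1}s_ns_{n-2}\cdots s_{i+1}$ and $b_i=s_1\cdots s_i$, one has $a_i=a_{i+1}s_{i+1}$ and $b_{i+1}=b_is_{i+1}$, so $v_{i+1}=a_is_{i+1}b_is_{i+1}$; since $s_{i+1}$ commutes with $s_1,\dots,s_{i-1}$ and $s_{i+1}s_is_{i+1}=s_is_{i+1}s_i$, this simplifies to $v_{i+1}=v_is_{i+1}s_i$, i.e.
\[
v_{i+1}^{-1}=s_is_{i+1}\,v_i^{-1}\qquad(1\le i\le n-4).
\]
Hence, once the formulas at level $i$ are known, those at level $i+1$ follow by applying $s_is_{i+1}$ to each of them.

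For the base case $i=1$ (the only case if $n=4$) I would compute directly, applying the word $v_1^{-1}=s_1s_2\cdots s_{n-2}s_ns_{n-1}s_{n-2}s_{n-3}\cdots s_2$ one simple reflection at a time. The delicate point is the descending run $s_{n-2},s_{n-3},\dots,s_2$ at the end of the word: when it is applied to $\alpha_1$ or $\alpha_2$ it successively doubles the coefficients of $\alpha_{n-2},\dots,\alpha_2$ in the running root, so one must track these coefficients carefully; one also uses that $\alpha_{n-1}$ and $\alpha_n$ are mutually orthogonal in type $D$ and that $s_{n-2}$ appears twice in the word. This yields $v_1^{-1}(\alpha_1)=\alpha_0$, $v_1^{-1}(\alpha_j)=\alpha_j$ for $3\le j\le n-2$, and $v_1^{-1}(\alpha_{n-1})=\alpha_n$, $v_1^{-1}(\alpha_n)=\alpha_{n-1}$, while the longer root $v_1^{-1}(\alpha_2)$ is read off the same computation.

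For the inductive step I apply $s_is_{i+1}$ to each level-$i$ formula. Most of it is automatic: in parts (3), (5)(i), (5)(ii), whenever the image root is supported away from $\{\alpha_i,\alpha_{i+1}\}$ it is orthogonal to both $\alpha_i$ and $\alpha_{i+1}$, hence fixed, and in particular $s_is_{i+1}$ fixes $\alpha_n$ and $\alpha_{n-1}$. The real content is at the boundary indices $j=i+1$ and $j=i+2$, and here the statements interlock: from $v_i^{-1}(\alpha_{i+2})=\alpha_{i+2}$ (part (5)(i) at level $i$) one gets $s_is_{i+1}(\alpha_{i+2})=\alpha_i+\alpha_{i+1}+\alpha_{i+2}$, which is part (4) at level $i+1$; from $v_i^{-1}(\alpha_{i+1})=\alpha_{i-1}+\alpha_i+\alpha_{i+1}$ (part (4) at level $i$) one gets $s_is_{i+1}(\alpha_{i-1}+\alpha_i+\alpha_{i+1})=\alpha_{i-1}$, which is part (3) at level $i+1$ for $j=i+1$. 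Parts (1), (2)(ii), (5)(ii) propagate by the same kind of two-reflection computation; for instance, applied to $\alpha_i+2\alpha_{i+1}+2\alpha_{i+2}+\cdots$, the reflection $s_{i+1}$ drops the coefficient of $\alpha_{i+1}$ from $2$ to $1$ and then $s_i$ removes $\alpha_i$, leaving $\alpha_{i+1}+2\alpha_{i+2}+\cdots+2\alpha_{n-2}+\alpha_{n-1}+\alpha_n$, which is part (1) at level $i+1$.

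The main obstacle is the base case together with parts (1) and (2): these involve image roots with genuinely non-uniform coefficient vectors, so they need the careful coefficient-by-coefficient tracking through the descending tail, whereas everything in (3) and (5), and the propagation of (1) and (2), reduces to one or two applications of a single reflection. Once the base case is pinned down, the recursion $v_{i+1}^{-1}=s_is_{i+1}v_i^{-1}$ carries the rest.
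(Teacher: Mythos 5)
Your route is genuinely different from the paper's. The paper proves each identity by a direct two-stage computation for each $i$ separately: it first applies the tail $s_{i+1}\cdots s_{n-2}s_{n}s_{n-1}s_{n-2}\cdots s_{2}$ and then $s_{i}\cdots s_{1}$ to each simple root (parts (1) and (2) are simply declared to ``follow from the usual calculation''). Your recursion $v_{i+1}=v_{i}s_{i+1}s_{i}$, hence $v_{i+1}^{-1}=s_{i}s_{i+1}v_{i}^{-1}$, is correct: the derivation from $a_{i}=a_{i+1}s_{i+1}$, $b_{i+1}=b_{i}s_{i+1}$, the commutations $s_{i+1}s_{j}=s_{j}s_{i+1}$ for $j\le i-1$, and the braid relation $s_{i+1}s_{i}s_{i+1}=s_{i}s_{i+1}s_{i}$ is valid in the range $1\le i\le n-4$, which is exactly what is needed to reach all levels up to $n-3$. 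I checked the inductive steps you describe and they do reproduce each formula at level $i+1$ from level $i$; in particular the interlocking you point out --- (5)(i) at level $i$ producing (4) at level $i+1$, and (4) at level $i$ producing the new case $j=i+1$ of (3) at level $i+1$ --- is exactly right, and parts (1) and (2)(ii) propagate by the two-reflection computations you indicate. What your approach buys is that only one honest word computation (the base case $i=1$) is required, with everything else reduced to applying two simple reflections; what it costs is that all five parts must be carried simultaneously and the base case must be made completely explicit.

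One warning about that base case. You leave $v_{1}^{-1}(\alpha_{2})$ as ``read off the same computation.'' If you carry it out you will find $v_{1}^{-1}(\alpha_{2})=-(\alpha_{1}+\alpha_{2}+2\alpha_{3}+\cdots+2\alpha_{n-2}+\alpha_{n-1}+\alpha_{n})=-(\alpha_{0}-\alpha_{2})$, not the value $-\alpha_{0}=-(\alpha_{1}+2\alpha_{2}+\cdots+2\alpha_{n-2}+\alpha_{n-1}+\alpha_{n})$ printed in part (2)(i). Your own recursion detects this: applying $s_{1}s_{2}$ to $-(\alpha_{0}-\alpha_{2})$ yields $-\alpha_{0}$, which is what (2)(ii) requires at $i=2$, whereas applying $s_{1}s_{2}$ to $-\alpha_{0}$ yields $-(\alpha_{2}+2\alpha_{3}+\cdots+2\alpha_{n-2}+\alpha_{n-1}+\alpha_{n})$, which is not. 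So you must take the corrected value as the base datum for part (2); with that adjustment the induction goes through, and your method has the side benefit of exposing the misprint in the stated (2)(i), which the paper's ``usual calculation'' leaves invisible.
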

	\begin{proof}
		Proof of (1): Follows from the usual calculation.
		
		Proof of (2): Follows from the usual calculation.
		
		Proof of (3): Since $3\le j\le i,$ we have $s_{i+1}\cdots s_{n-2}s_{n}s_{n-1}s_{n-2}\cdots s_{2}(\alpha_{j})=\alpha_{j-1}.$ Further, since $3\le j\le i,$
		we have $s_{i}\cdots s_{1}(\alpha_{j-1})=\alpha_{j-2}.$ So, we have $v_{i}^{-1}(\alpha_{j})=\alpha_{j-2}$ for all $3\le j\le i.$
		
		Proof of (4): For  $2\le i\le n-3,$ we have $s_{i+1}\cdots s_{n-2}s_{n}s_{n-1}s_{n-2}\cdots s_{2}(\alpha_{i+1})=\alpha_{i}+\alpha_{i+1}.$ Further,
		we have $s_{i}\cdots s_{1}(\alpha_{i}+\alpha_{i+1})=\alpha_{j-2}.$ So, we have $v_{i}^{-1}(\alpha_{j})=\alpha_{i-1}+\alpha_{i}+\alpha_{i+1}$ for all $2\le i\le n-3.$ 
		
		Proof of (5): For  $i+2\le j\le  n-2,$ we have $s_{i+1}\cdots s_{n-2}s_{n}s_{n-1}s_{n-2}\cdots s_{2}(\alpha_{j})=\alpha_{j}.$ Further, 
		we have $s_{i}\cdots s_{1}(\alpha_{j})=\alpha_{j}$ for all $i+2\le j\le  n-2.$ So, we have $v_{i}^{-1}(\alpha_{j})=\alpha_{j}$ for all $2+i\le j\le n-2.$
		
		We note that $s_{i+1}\cdots s_{n-2}s_{n}s_{n-1}s_{n-2}\cdots s_{2}(\alpha_{n-1})=\alpha_{n}.$ Further, we have $s_{i}\cdots s_{1}(\alpha_{n})=\alpha_{n}.$ Thus we have $v_{i}^{-1}(\alpha_{n-1})=\alpha_{n}.$
		
		Similarly, we note that 
		$s_{i+1}\cdots s_{n-2}s_{n}s_{n-1}s_{n-2}\cdots s_{2}(\alpha_{n})=\alpha_{n-1}.$ Furthermore, we have $s_{i}\cdots s_{1}(\alpha_{n-1})=\alpha_{n-1}.$ So, $v_{i}^{-1}(\alpha_{n})=\alpha_{n-1}.$ 
		
	\end{proof}
	
	\begin{lem}\label{lem 6.5}
		For  $2\le i\le n-2,$ let $w_{i}$ be as above. Then we have $w_{i}^{-1}(\alpha_{j})$ is a positive root for $j\neq i,$ and $w_{i}^{-1}(\alpha_{i})$ is a negative root. 	
	\end{lem}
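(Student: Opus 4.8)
The plan is to compute $w_i^{-1}(\alpha_j)$ for every simple root $\alpha_j$ and read off its sign. For $i=2,3$ this is immediate: $w_2^{-1}=s_1u_2$ and $w_3^{-1}=v_2^{-1}u_3$, so I would apply the two factors in succession to each $\alpha_j$, using Lemma~\ref{lemma 6.2} for $u_2,u_3$ and Lemma~\ref{lemma 6.5} for $v_2^{-1}$; in each case only $\alpha_i$ comes out negative. The substantive case is $4\le i\le n-2$.

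For those $i$ the efficient route is to work in the standard coordinates $\epsilon_1,\ldots,\epsilon_n$ of $D_n$, in which $u_j$ is precisely the sign change on $\epsilon_j$ and $\epsilon_n$ (as one reads off from Lemma~\ref{lemma 6.2}), while $v_m$ is the explicit signed permutation recorded, through $v_m^{-1}$, in Lemma~\ref{lemma 6.5}. Writing $a_k=s_ks_{k+1}\cdots s_{i-1}$ so that $w_i=u_i(a_{i-1}u_i)(a_{i-2}u_i)\cdots(a_3u_i)v_{i-1}$, the key step is the exchange relation $a_ku_i=u_ka_k$ for $3\le k\le i-1$ — both sides being the sign change on $\epsilon_k,\epsilon_n$ followed by the cycle $\epsilon_k\mapsto\epsilon_{k+1}\mapsto\cdots\mapsto\epsilon_i\mapsto\epsilon_k$ — which, together with the commutations $a_ku_j=u_ja_k$ for $j<k$, telescopes the product to
\[
w_i=(u_iu_{i-1}\cdots u_3)\,(a_{i-1}a_{i-2}\cdots a_3)\,v_{i-1}.
\]
Here $a_{i-1}a_{i-2}\cdots a_3$ is the reversal of the coordinates $\epsilon_3,\ldots,\epsilon_i$ (the longest element of the symmetric group on them), and $u_iu_{i-1}\cdots u_3$ changes the signs of $\epsilon_3,\ldots,\epsilon_i$, with its action on $\epsilon_n$ governed by the parity of $i$. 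Composing these three explicit signed permutations gives $w_i$, hence $w_i^{-1}$, in closed form.

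It then remains to evaluate $w_i^{-1}$ on $\alpha_j=\epsilon_j-\epsilon_{j+1}$ ($j<n$) and on $\alpha_n=\epsilon_{n-1}+\epsilon_n$, and this bookkeeping is the only genuine obstacle: one must track which simple root $w_i^{-1}(\alpha_j)$ lands on, being careful at the boundary indices $j=1,2,i-1,i,i+1$ and at the two fork nodes $j=n-1,n$, where the parity of $i$ interchanges $\alpha_{n-1}$ and $\alpha_n$. Splitting into the ranges $j\le i-2$, $j\in\{i-1,i\}$, $i+1\le j\le n-2$, and $j\in\{n-1,n\}$, one checks that $w_i^{-1}(\alpha_j)$ is always a positive root, except that $w_i^{-1}(\alpha_i)=-(\epsilon_1+\epsilon_{i+1})$ is negative; the relation $w_i(\alpha_i)=\alpha_1+\cdots+\alpha_i$ of Lemma~\ref{lemma 6.3}(3), forced by the same normal form, is a convenient consistency check. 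If one prefers to stay inside the paper's purely root-theoretic language, the same computation goes through verbatim by substituting the explicit formulas of Lemmas~\ref{lemma 6.2} and~\ref{lemma 6.5} throughout, at the price of heavier notation.
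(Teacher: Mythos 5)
Your proposal is correct, and it reorganizes the computation in a way that genuinely differs from the paper's proof. The paper splits into the cases $i=2$, $i=3$, $4\le i\le n-2$ exactly as you do for the first two, but in the main case it never simplifies the word $w_{i}=u_{i}(s_{i-1}u_{i})\cdots(s_{3}\cdots s_{i-1}u_{i})v_{i-1}$: it pushes each simple root $\alpha_{j}$ through the factors one at a time, invoking Lemma \ref{lemma 6.2} repeatedly in a recursion on the partial products $(s_{k}\cdots s_{i-1}u_{i})^{-1}\cdots u_{i}^{-1}(\alpha_{j})$ before finishing with Lemma \ref{lemma 6.5}. You instead pass to the $\epsilon$-coordinates of $D_{n}$, observe that $u_{k}$ is the sign change on $\epsilon_{k},\epsilon_{n}$ (which is indeed what Lemma \ref{lemma 6.2} encodes, since $u_{k}=c\,s_{n-1}s_{n}\,c^{-1}$ with $c=s_{k}\cdots s_{n-2}$), and use the exchange relation $a_{k}u_{i}=u_{k}a_{k}$ together with the commutations $a_{k}u_{j}=u_{j}a_{k}$ for $j<k$ to collapse the word to the normal form $w_{i}=(u_{i}\cdots u_{3})(a_{i-1}\cdots a_{3})v_{i-1}$. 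I verified the exchange relation, the normal form, and the resulting values — e.g. $w_{i}^{-1}(\alpha_{i})=-(\epsilon_{1}+\epsilon_{i+1})$, $w_{i}^{-1}(\alpha_{1})=\epsilon_{i-1}+\epsilon_{i}$, $w_{i}^{-1}(\alpha_{2})=\epsilon_{i-2}-\epsilon_{i-1}$, and $w_{i}^{-1}(\alpha_{j})=\alpha_{i-j}$ for $3\le j\le i-2$ — and they agree with what the paper obtains factor by factor. What your route buys is a closed form for $w_{i}$ as a signed permutation, which turns the sign check for each $j$ into a one-line evaluation and makes the parity subtlety at the fork nodes $\alpha_{n-1},\alpha_{n}$ explicit; what the paper's route buys is that it stays entirely inside the root-theoretic formulas already recorded in Lemmas \ref{lemma 6.2} and \ref{lemma 6.5}, at the cost of a longer recursive bookkeeping. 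To make your argument fully self-contained you should still write out the evaluation over the ranges of $j$ you list at the end, but that is mechanical once the normal form is in hand.
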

	\begin{proof}
		Case I: When $i=2.$
		
		Note that $w_{2}=u_{2}s_{1}.$
		
		By Lemma \ref{lemma 6.2}(2), we have  
		$u_{2}^{-1}(\alpha_{1})=\alpha_{1}+2\alpha_{2}+\cdots+ 2\alpha_{n-2}+\alpha_{n-1}+\alpha_{n}.$ Therefore, we have $w_{2}^{-1}(\alpha_{1})=\alpha_{1}+2\alpha_{2}+\cdots+ 2\alpha_{n-2}+\alpha_{n-1}+\alpha_{n}.$ 
		
		By Lemma \ref{lemma 6.2}(3),  for $3\le j\le n,$ $u_{2}^{-1}(\alpha_{j})$ is a positive root whose support does not contain $\alpha_{1}.$ Hence, $w_{2}^{-1}(\alpha_{j})$ is a positive root for all $3\le j\le n.$ 
		
		On the other hand, by Lemma \ref{lemma 6.2}(2),  $u_{2}^{-1}(\alpha_{2})$ is negative of a non simple root. Therefore, $w_{2}^{-1}(\alpha_{2})$ is a negative root.
		\vspace*{.2cm}
		
		Case II: When $i=3.$
		
		Note that $w_{3}=u_{3}v_{2}.$

		By Lemma \ref{lemma 6.2}(2), we have  ${u_{3}}^{-1}(\alpha_{2})=\alpha_{2}+2\alpha_{3}+\cdots +2\alpha_{n-2}+\alpha_{n-1}+\alpha_{n}.$  By using Lemma \ref{lemma 6.5}(2),(3),(4) and (5), we have ${v_{2}}^{-1}(\alpha_{2}+2\alpha_{3}+2\alpha_{4}+\cdots +2\alpha_{n-2}+\alpha_{n-1}+\alpha_{n})=\alpha_{1}.$ Therefore, we have $w_{3}^{-1}(\alpha_{2})=\alpha_{1}.$
		
		By Lemma \ref{lemma 6.2}, for $j=1$ or $4\le j\le n,$ $u_{3}^{-1}(\alpha_{j})$ is a positive root whose support does not contain $\alpha_{2}.$ Therefore, by Lemma \ref{lemma 6.2},  $w_{3}^{-1}(\alpha_{j})=v_{2}^{-1}u_{3}^{-1}(\alpha_{j})$ is a positive root for all $4\le j\le n.$
		
		On the other hand, by Lemma \ref{lemma 6.2}, $u_{3}^{-1}(\alpha_{3})$ is a non simple negative root whose support does not contain $\alpha_{2}.$ Therefore, by Lemma \ref{lemma 6.5}, $w_{3}^{-1}(\alpha_{3})=v_{2}^{-1}u_{3}^{-1}(\alpha_{3})$ is a negative root.
		\vspace{.2cm}
		
		Case III: When $4\le i\le n-2.$
		
		Note that for $4\le i\le n-2,$ we have 
		$w_{i}=u_{i}(s_{i-1}u_{i})\cdots(s_{3}\cdots s_{i-1}u_{i})v_{i-1}.$ 
		
		Since $i\ge 4,$ by using Lemma \ref{lemma 6.2}(1), we have ${(s_{3}\cdots s_{i-1}u_{i})}^{-1}\cdots{(s_{i-1}u_{i})}^{-1}{u_{i}}^{-1}(\alpha_{1})=\alpha_{1}.$ On the other hand, by Lemma \ref{lemma 6.5}(1), we have ${v_{i-1}}^{-1}(\alpha_{1})=\alpha_{i-1}+2\alpha_{i}+\cdots +2\alpha_{n-2}+\alpha_{n-1}+\alpha_{n}.$ Thus we have $w_{i}^{-1}(\alpha_{1})=\alpha_{i-1}+2\alpha_{i}+\cdots +2\alpha_{n-2}+\alpha_{n-1}+\alpha_{n}$ for all $4\le i\le n-2.$

		Since $i\ge 4,$ by using Lemma \ref{lemma 6.2}(1), we have ${(s_{3}\cdots s_{i-1}u_{i})}^{-1}\cdots{(s_{i-1}u_{i})}^{-1}{u_{i}}^{-1}(\alpha_{2})=\alpha_{2}+\alpha_{3}+\cdots +\alpha_{i-1}+2\alpha_{i}+\cdots+ 2\alpha_{n-2}+\alpha_{n-1}+\alpha_{n}.$ On the other hand, by Lemma \ref{lemma 6.5}, we have ${v_{i-1}}^{-1}(\alpha_{2}+\alpha_{3}+\cdots +\alpha_{i-1}+2\alpha_{i}+\cdots+ 2\alpha_{n-2}+\alpha_{n-1}+\alpha_{n})=\alpha_{i-2}.$ Thus we have $w_{i}^{-1}(\alpha_{2})=\alpha_{i-2}$ for all $4\le i\le n-2.$

		For $3\le j\le i-2,$ we have $u_{i}^{-1}(\alpha_{j})=\alpha_{j},$ $(s_{k}\cdots s_{i-1}u_{i})^{-1}(\alpha_{j})=\alpha_{j}$ for all $j+2\le k\le i-1.$ On the other hand, we note that  $(s_{j+1}\cdots s_{i-1}u_{i})^{-1}(\alpha_{j})=u_{i}^{-1}(\alpha_{j}+\alpha_{j+1}+\cdots +\alpha_{i-1}).$ Therefore, by Lemma \ref{lemma 6.2}(1),(2), we have $u_{i}^{-1}(\alpha_{j}+\alpha_{j+1}+\cdots +\alpha_{i-1})=\alpha_{j}+\cdots +\alpha_{i-2}+\alpha_{i-1}+2\alpha_{i}+2\alpha_{i+1}+\cdots +2\alpha_{n-2}+\alpha_{n-1}+\alpha_{n}.$ 
		Now, $(s_{j}s_{j+1}\cdots s_{i-1}u_{i})^{-1}(\alpha_{j}+\cdots +\alpha_{i-2}+\alpha_{i-1}+2\alpha_{i}+2\alpha_{i+1}+\cdots +2\alpha_{n-2}+\alpha_{n-1}+\alpha_{n})=u_{i}^{-1}(\alpha_{i-1}+2\alpha_{i}+2\alpha_{i+1}+\cdots +2\alpha_{n-2}+\alpha_{n-1}+\alpha_{n})=\alpha_{i-1}$(see Lemma \ref{lemma 6.2}(2)).  We have 
		$(s_{j-1}s_{j}s_{j+1}\cdots s_{i-1}u_{i})^{-1}(\alpha_{i-1})=u_{i}^{-1}(\alpha_{i-2})=\alpha_{i-2}$ (see Lemma \ref{lemma 6.2}(1)). Similarly, by recursion we have $(s_{3}\cdots s_{j}s_{j+1}\cdots s_{i-1}u_{i})^{-1}\cdots (s_{i-1}u_{i})^{-1}u_{i}^{-1}(\alpha_{j})=\alpha_{i-j+2}.$ By using Lemma \ref{lemma 6.5}(3), we have $v_{i-1}^{-1}(\alpha_{i-j+2})=\alpha_{i-j}.$  Therefore, we have $w_{i}^{-1}(\alpha_{j})=\alpha_{i-j}$ for all $i\ge 4,$ and $3\le j\le i-2.$

		For $i+1\le j\le n,$ by Lemma \ref{lemma 6.2}(3),  $(u_{i}(s_{i-1}u_{i})\cdots (s_{3}\cdots s_{i-1}u_{i}))^{-1}(\alpha_{j}),$ is a positive root whose support does not contain $\alpha_{2}.$ Thus by Lemma \ref{lemma 6.5}, $v_{i-1}^{-1}(u_{i}(s_{i-1}u_{i})\cdots (s_{3}\cdots s_{i-1}u_{i}))^{-1}(\alpha_{j})$ is a positive root. Therefore, $w_{i}^{-1}(\alpha_{j})$ is a positive root for all $i+1\le j \le n.$

		Now, we consider $i\ge 4,$ and $ j= i-1.$ Then by Lemma \ref{lemma 6.2}(2), we have $u_{i}^{-1}(\alpha_{i-1})=\alpha_{i-1}+2\alpha_{i}+\cdots+2\alpha_{n-2}+\alpha_{n-1}+\alpha_{n}.$ Again, by Lemma \ref{lemma 6.2}, we have $(s_{i-1}u_{i})^{-1}(\alpha_{i-1}+2\alpha_{i}+\cdots+2\alpha_{n-2}+\alpha_{n-1}+\alpha_{n})=\alpha_{i-1}.$ Now, $(s_{i-2}s_{i-1}u_{i})^{-1}(\alpha_{i-1})=u_{i}^{-1}(\alpha_{i-2})=\alpha_{i-2}.$ \\ 
		Similarly, by recursion we have $(s_{3}\cdots s_{j}s_{j+1}\cdots s_{i-1}u_{i})^{-1}(\alpha_{j})=\alpha_{3}.$ By using Lemma \ref{lemma 6.5}(3) we have $v_{i-1}^{-1}(\alpha_{3})=\alpha_{1}.$  Therefore, we have $w_{i}^{-1}(\alpha_{i-1})=\alpha_{1}.$ 
		
		By Lemma \ref{lemma 6.2}(2), we have $u_{i}^{-1}(\alpha_{i})=-(\alpha_{i}+2\alpha_{i+1}+\cdots +2\alpha_{n-2}+\alpha_{n-1}+\alpha_{n}).$ Let $\beta_{i}=\alpha_{i}+2\alpha_{i+1}+\cdots +2\alpha_{n-2}+\alpha_{n-1}+\alpha_{n}.$ Therefore, by Lemma \ref{lemma 6.2}(2), and Lemma \ref{lemma 6.2}(3), we have $(s_{i-1}u_{i})^{-1}(-\beta_{i})=u_{i}^{-1}(-\beta_{i} -\alpha_{i-1})=-(\alpha_{i-1}+\beta_{i}).$ Then by using Lemma \ref{lemma 6.2}(1),(2), and (3), we have $(s_{i-2}s_{i-1}u_{i})^{-1}(-(\alpha_{i-1}+\beta_{i}))=-u_{i}^{-1}(\alpha_{i-2}+\alpha_{i-1}+\beta_{i})=-(\alpha_{i-2}+\alpha_{i-1}+\beta_{i}).$ Thus by recursion we have $(s_{4}\cdots s_{i-1}u_{i})^{-1}\cdots (s_{i-1}u_{i})^{-1}u_{i}^{-1}(\alpha_{i})=-(\alpha_{4}+\cdots +\alpha_{i-1}+\beta_{i}).$ Therefore, $(s_{3}\cdots s_{i-1}u_{i})^{-1}(-(\alpha_{4}+\cdots +\alpha_{i-1}+\beta_{i}))=-u_{i}^{-1}(\alpha_{3}+\alpha_{4}+\cdots +\alpha_{i-1}+\beta_{i})=-(\alpha_{3}+\alpha_{4}+\cdots +\alpha_{i-1}+\beta_{i}).$ Since support of $\alpha_{3}+\alpha_{4}+\cdots +\alpha_{i-1}+\beta_{i}$ does not contain $\alpha_{2},$ by Lemma \ref{lemma 6.5},  
		$v_{i-1}^{-1}(-(\alpha_{3}+\alpha_{4}+\cdots +\alpha_{i-1}+\beta_{i})$ is a negative root. Thus $w_{i}^{-1}(\alpha_{i})$ is a negative root.

		Therefore, combining Case I, Case II, and Case III proof of the lemma follows.
	\end{proof}

	\begin{proof}[{\bf Proof of Proposition \ref{Prop 6.1}:}]
		We note that $\omega_{i}$ is not minuscule  if and only if  $2\le i\le n-2.$ Fix an integer  $2\le i\le n-2.$ Then by Lemma \ref{lemma 6.3}(3), we conclude  that $w_{i}(\alpha_{i})$ is a non simple positive root.
		On the other hand, by  Lemma \ref{lem 6.5},  $w_{i}^{-1}(\alpha_{i})$ is a negative root, and $w_{i}^{-1}(\alpha_{j})$ is a positive root for $j\neq i.$ Thus $P_{i}$ is the stabilizer of $X_{P_{i}}(w_{i})$ in $G/P_{i}$ for the natural action of $G.$ Further, since $s_{2}\nleq u_{i}(s_{i-1}u_{i})\cdots (s_{3}\cdots s_{i-1}u_{i}),$ and $\alpha_{0}=\omega_{2},$ we have $(s_{3}\cdots s_{i-1}u_{i})^{-1}\cdots (s_{i-1}u_{i})^{-1}u_{i}^{-1}(\alpha_{0})=\alpha_{0}.$ Therefore,  $w_{i}^{-1}(\alpha_{0})=v_{i-1}^{-1}(\alpha_{0})$ is a negative root. Therefore, by using  Theorem \ref{thm1} the natural homomorphism $P_{i}\longrightarrow Aut^0(X_{P_{i}}(w_{i}))$ is an isomorphism of algebraic groups. 
	\end{proof}

	\section{$G$ is of type $E_{6}$}
	In this section, we prove our result for $E_{6}.$ Note that by Lemma \ref{lem 3.2}, the non-minuscule weights are $\omega_{i}$ where $i=2,3,4,5.$ In this section, we prove the following:
	
	\begin{prop}\label{proposition 5.4}
		Assume that $\omega_{i}$ is non-minuscule. Then there exists a Schubert variety $X_{P_{4}}(w_{i})$ in $G/P_{4}$ such that $P_{i}=Aut^0(X_{P_{4}}(w_{i})).$
	\end{prop}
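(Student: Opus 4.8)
The plan is to invoke Theorem \ref{thm1} with $I=\{\alpha_4\}$ and $P=P_4$. The non-minuscule indices for $E_6$ are $i\in\{2,3,4,5\}$ (cf. Lemma \ref{lem 3.2}), and it suffices to produce, for each such $i$, an element $w_i\in W^{\{\alpha_4\}}$ with the following three properties: \emph{(a)} $w_i(\alpha_4)$ is a non-simple positive root; \emph{(b)} $w_i^{-1}(\alpha_i)\in R^{-}$ while $w_i^{-1}(\alpha_j)\in R^{+}$ for all $j\neq i$; and \emph{(c)} $w_i^{-1}(\alpha_0)\in R^{-}$. Granting these, the stabilizer $P_{w_i}$ of $X_{P_4}(w_i)$ in $G$ for the left action is the standard parabolic generated by $B$ together with the $P_j$ that fix $X_{P_4}(w_i)$; by \emph{(a)} (which makes $w_i(\alpha_4)$ non-simple) and the usual criterion, these are exactly the $P_j$ with $w_i^{-1}(\alpha_j)<0$, so by \emph{(b)} we get $P_{w_i}=P_i$. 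Property \emph{(c)} and Theorem \ref{thm1}(2) say that $\varphi_{w_i}\colon P_{w_i}\longrightarrow Aut^0(X_{P_4}(w_i))$ is an isomorphism, and putting these together yields $P_i=Aut^0(X_{P_4}(w_i))$, which is the assertion.

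To build the $w_i$, I would use the element $v_4=u_{\alpha_4}s_{\alpha_4}$ of Corollary \ref{corollary 2.1} as a backbone: it is the unique minimal-length element with $v_4^{-1}(\alpha_0)=-\alpha_4$, and $\ell(v_4)=ht(\alpha_0)=11$ in type $E_6$. The point of this choice is that property \emph{(c)} comes for free if we look for $w_i$ of the form $w_i=v_4\,y_i$ with $y_i$ in the Weyl group $W_{S\setminus\{\alpha_4\}}$ of the Levi subgroup $L_{S\setminus\{\alpha_4\}}$: such a $y_i$ fixes the $\alpha_4$-coefficient of every root, so $y_i^{-1}(\alpha_4)\in R^{+}$ and hence $w_i^{-1}(\alpha_0)=y_i^{-1}(-\alpha_4)\in R^{-}$. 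The factor $y_i$ is then to be chosen, by inspection of the $E_6$ Dynkin diagram, so that the product $v_4 y_i$ stays in $W^{\{\alpha_4\}}$, has its unique left descent at $\alpha_i$, and sends $\alpha_4$ to a non-simple positive root; for $i=4$ one expects $y_4$ to be very short (so $w_4$ essentially $v_4$), while for $i=2,3,5$ the element $y_i$ should carry a short chain of simple reflections running along the relevant arm of the diagram. The Dynkin diagram automorphism $\sigma$ induced by $-w_0$ fixes $\alpha_4$ and interchanges $\alpha_3$ with $\alpha_5$; since it induces an isomorphism $G/P_4\longrightarrow G/P_4$ carrying $X_{P_4}(w)$ to $X_{P_4}(\sigma(w))$ and conjugating $Aut^0$ accordingly, the case $i=5$ reduces to the case $i=3$ by taking $w_5=\sigma(w_3)$, and one is left with the three cases $i=2,3,4$.

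For each of $i=2,3,4$ the plan is to write an explicit reduced word for $w_i$ and then verify \emph{(a)}, \emph{(b)}, \emph{(c)} by computing $w_i^{-1}(\alpha_j)$ for all six simple roots $\alpha_j$ and $w_i^{-1}(\alpha_0)$, exactly in the style of the root computations carried out in the type $D_n$ case above (the lemmas leading to the proof of Proposition \ref{Prop 6.1}). The only real obstacle is this: pinning down the correct words $w_2,w_3,w_4$ and pushing through the bookkeeping in the $36$-element positive root system of $E_6$. Once the words are fixed the verifications are purely mechanical, and no ingredient beyond Theorem \ref{thm1} enters. The case $i=2$ is the one needing slightly more care, because $\alpha_0=\omega_2$ in type $E_6$, so the descent condition \emph{(b)} at $\alpha_2$ and condition \emph{(c)} that $w_2^{-1}(\alpha_0)<0$ are closely intertwined, and the word for $w_2$ must be arranged to secure both at once while still keeping $w_2\in W^{\{\alpha_4\}}$ with $w_2(\alpha_4)$ non-simple.
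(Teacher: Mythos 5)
Your reduction of the proposition to Theorem \ref{thm1} is exactly the paper's: it suffices to produce $w_i\in W^{\{\alpha_4\}}$ with $w_i(\alpha_4)$ a non-simple positive root, with $\{\alpha_j:w_i^{-1}(\alpha_j)<0\}=\{\alpha_i\}$, and with $w_i^{-1}(\alpha_0)<0$; your stabilizer analysis and the appeal to Theorem \ref{thm1}(2) are correct, as is the reduction of $i=5$ to $i=3$ via $\sigma$. The gap is in the construction of the $w_i$, and it is not merely that the explicit words are deferred to ``bookkeeping'': the ansatz $w_i=v_4y_i$ with $y_i\in W_{S\setminus\{\alpha_4\}}$ cannot satisfy your condition \emph{(b)} for $i=3,4,5$. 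Writing down $v_4$ explicitly (reverse any height-lowering chain from $\alpha_0$ to $\alpha_4$ and append $s_4$; e.g. $v_4=s_2s_4s_3s_5s_1s_6s_4s_2s_3s_5s_4$), one finds $v_4^{-1}(\alpha_2)=-(\alpha_0-\alpha_2)=-(\alpha_1+\alpha_2+2\alpha_3+3\alpha_4+2\alpha_5+\alpha_6)$ --- in particular $v_4^{-1}(\alpha_2)$ is a \emph{negative} root whose $\alpha_4$-coefficient is $-3$. (That $v_4^{-1}(\alpha_2)<0$ is forced: since $\alpha_0=\omega_2$, every height-lowering chain from $\alpha_0$ begins with $s_2$, so $v_4^{-1}$ has a reduced word ending in $s_2$.) Since every $y_i\in W_{S\setminus\{\alpha_4\}}$ preserves the $\alpha_4$-coefficient of a root --- the very fact you invoke to get $w_i^{-1}(\alpha_0)=y_i^{-1}(-\alpha_4)<0$ for free --- it follows that $w_i^{-1}(\alpha_2)=y_i^{-1}\bigl(v_4^{-1}(\alpha_2)\bigr)$ is a negative root for \emph{every} choice of $y_i$. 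Thus $\alpha_2$ is a left descent of every element of the coset $v_4W_{S\setminus\{\alpha_4\}}$, and no element of that coset can have its unique left descent at $\alpha_3$, $\alpha_4$ or $\alpha_5$.

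The structural point you are missing is that the ``free'' backbone must be adapted to the node $\alpha_r$ with $\alpha_0=\omega_r$ (here $r=2$), not to the node defining the parabolic $P_4$. The paper takes $v_2$ (the minimal element with $v_2^{-1}(\alpha_0)=-\alpha_2$) and corrects it by \emph{left} multiplication by elements of $W_{S\setminus\{\alpha_2\}}$, namely $w_i=w_{0,S\setminus\{\alpha_2,\alpha_i\}}\,w_{0,S\setminus\{\alpha_2\}}\,v_2$; condition \emph{(c)} is then automatic because these left factors fix $\alpha_0=\omega_2$, while the descent set and the value $w_i(\alpha_4)$ are controlled by Lemmas \ref{lemma 5.3}, \ref{lemma 5.2} and \ref{lemma 5.5}. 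Separately, even in the one case ($i=2$) where your ansatz is not obstructed, the proposal never exhibits the element or verifies \emph{(a)}--\emph{(b)} for it; since the existence of these specific Weyl group elements is the entire content of the proposition once Theorem \ref{thm1} is in place, the argument as written does not establish the statement.
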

	
	Recall that there exists a unique element $v_{2}\in W$ of smallest length such that $v_{2}^{-1}(\alpha_{0})=-\alpha_{2}$ (see Corollary \ref{corollary 2.1}).
	
	\begin{lem}\label{lemma 5.3}
		Let $v_{2}\in W$ be as above. Then we have the following
		\begin{enumerate}
			\item [(1)] $v_{2}=s_{2}s_{4}s_{5}s_{3}s_{6}s_{4}s_{1}s_{3}s_{5}s_{4}s_{2}.$ In particular, we have  $v_{2}^{-1}=v_{2}.$
			
			\item[(2)]$v_{2}(\alpha_{1})=\alpha_{5},$ $v_{2}(\alpha_{3})=\alpha_{6},$ $v_{2}(\alpha_{4})=\alpha_{2}+\alpha_{3}+2\alpha_{4}+\alpha_{5},$ and $v_{2}(\omega_{2}-\alpha_{2})=\omega_{2}-\alpha_{2}.$
			
			\item[(3)] (i). $w_{0, S\setminus \{\alpha_{2}\}} v_{2}(\alpha_{4})=\omega_{2}-(\alpha_{2}+\alpha_{3}+2\alpha_{4}+\alpha_{5})=\alpha_{1}+\alpha_{2}+\alpha_{3}+\alpha_{4}+\alpha_{5}+\alpha_{6}.$ \\
			(ii). $w_{0, S\setminus \{\alpha_{2}, \alpha_{i}\}}w_{0, S\setminus \{\alpha_{2}\}}v_{2}(\alpha_{4})$ is a non simple positive root for $i=3,4,5.$
		\end{enumerate}
	\end{lem}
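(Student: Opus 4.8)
The plan is to dispatch the three parts in order, each by a short direct computation built on the structural facts already in hand.

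\emph{Part (1).} By Corollary~\ref{corollary 2.1} the unique element $v_2\in W$ of minimal length with $v_2^{-1}(\alpha_0)=-\alpha_2$ has $\ell(v_2)=\mathrm{ht}(\alpha_0)$; in type $E_6$ one has $\alpha_0=\alpha_1+2\alpha_2+2\alpha_3+3\alpha_4+2\alpha_5+\alpha_6$, so $\ell(v_2)=11$. I would therefore take the word $w:=s_2s_4s_5s_3s_6s_4s_1s_3s_5s_4s_2$ and simply verify, by applying its eleven simple reflections one at a time from the right, that $w(\alpha_2)=-\alpha_0$, i.e.\ $w^{-1}(\alpha_0)=-\alpha_2$. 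Since $w$ has exactly $11$ letters and every element sending $\alpha_0$ to $-\alpha_2$ under its inverse has length $\ge 11$, the expression is automatically reduced and, by the uniqueness in Proposition~\ref{Prop 2.1}, $w=v_2$; no separate braid‑reduction argument is needed. Finally, the nodes $1,4,6$ are pairwise non‑adjacent in the $E_6$ diagram, so $s_1,s_4,s_6$ pairwise commute and $s_6s_4s_1=s_1s_4s_6$; substituting this in the middle of the reduced word exhibits it as a palindrome, hence $v_2^{-1}=v_2$.

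\emph{Part (2).} The identities $v_2(\alpha_1)=\alpha_5$, $v_2(\alpha_3)=\alpha_6$, $v_2(\alpha_4)=\alpha_2+\alpha_3+2\alpha_4+\alpha_5$ I would again obtain by running the reduced word on each of $\alpha_1,\alpha_3,\alpha_4$. For $v_2(\omega_2-\alpha_2)$ it is cleaner to avoid brute force: write $v_2=(s_2\rho')s_2$ with $\rho':=s_4s_5s_3s_6s_4s_1s_3s_5s_4\in W_{S\setminus\{\alpha_2\}}$. Since $\langle\omega_2-\alpha_2,\alpha_2\rangle=1-2=-1$ we have $s_2(\omega_2-\alpha_2)=\omega_2$; since $\langle\omega_2,\alpha_j\rangle=0$ for all $j\ne 2$, $\rho'$ fixes $\omega_2$; and $s_2(\omega_2)=\omega_2-\alpha_2$. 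Composing gives $v_2(\omega_2-\alpha_2)=\omega_2-\alpha_2$.

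\emph{Part (3).} For (i): $w_{0,S\setminus\{\alpha_2\}}$ lies in $W_{S\setminus\{\alpha_2\}}$, which is of type $A_5$ on the chain $\alpha_1-\alpha_3-\alpha_4-\alpha_5-\alpha_6$, so it acts on that subsystem as the $A_5$ longest element: $\alpha_3\mapsto-\alpha_5$, $\alpha_4\mapsto-\alpha_4$, $\alpha_5\mapsto-\alpha_3$. Combined with $w_{0,S\setminus\{\alpha_2\}}(\alpha_2)=\alpha_0-\alpha_2$ from Lemma~\ref{lemma 3.1} and the value $v_2(\alpha_4)=\alpha_2+\alpha_3+2\alpha_4+\alpha_5$ from (2), this yields $w_{0,S\setminus\{\alpha_2\}}v_2(\alpha_4)=\alpha_0-(\alpha_2+\alpha_3+2\alpha_4+\alpha_5)$, which on substituting $\alpha_0=\omega_2=\alpha_1+2\alpha_2+2\alpha_3+3\alpha_4+2\alpha_5+\alpha_6$ becomes $\alpha_1+\alpha_2+\alpha_3+\alpha_4+\alpha_5+\alpha_6$. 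For (ii), put $\beta:=\alpha_1+\cdots+\alpha_6$. The coefficient of $\alpha_2$ in $\beta$ is $1$ while $\alpha_2\notin S\setminus\{\alpha_2,\alpha_i\}$, so $\beta$ is not a root of the Levi of $S\setminus\{\alpha_2,\alpha_i\}$; hence $w_{0,S\setminus\{\alpha_2,\alpha_i\}}(\beta)$ is again a positive root, since an element of a parabolic Weyl subgroup preserves $R^+\setminus R^+_{S\setminus\{\alpha_2,\alpha_i\}}$. Thus rootness and positivity come for free, and it remains only to see the result is non‑simple; I would check this by writing $w_{0,S\setminus\{\alpha_2,\alpha_i\}}$ as the product of the longest elements of the (type $A$) connected components of $S\setminus\{\alpha_2,\alpha_i\}$ and evaluating, finding $w_{0,S\setminus\{\alpha_2,\alpha_4\}}(\beta)=\alpha_2+\alpha_4$, $w_{0,S\setminus\{\alpha_2,\alpha_3\}}(\beta)=\alpha_2+\alpha_3+\alpha_4+\alpha_5+\alpha_6$, and $w_{0,S\setminus\{\alpha_2,\alpha_5\}}(\beta)=\alpha_1+\alpha_2+\alpha_3+\alpha_4+\alpha_5$, all visibly not simple.

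The only delicate bookkeeping is in Part~(3): one must track the images of the simple roots lying outside the parabolic, namely $\alpha_2$ (governed by Lemma~\ref{lemma 3.1}, or, when $i=4$, by $\alpha_2\perp\{\alpha_1,\alpha_3,\alpha_5,\alpha_6\}$) and the removed $\alpha_i$ itself, whose image costs a two‑ or three‑step reflection computation. Nothing here is conceptually hard; the main hazard is arithmetic slippage, which the consistency checks (every output should have squared length $2$ and nonnegative coordinates) keep under control.
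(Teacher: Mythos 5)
Your proposal is correct and follows essentially the same route as the paper: part (1) by checking that the given $11$-letter word sends $\alpha_{2}$ to $-\alpha_{0}$ and invoking the length bound of Corollary \ref{corollary 2.1} together with uniqueness of the minimal-length element; part (2) by direct computation; and part (3)(i) via the type-$A_{5}$ action of $w_{0,S\setminus\{\alpha_{2}\}}$ combined with Lemma \ref{lemma 3.1}. The only differences are in which details get spelled out — you supply the commutation $s_{6}s_{4}s_{1}=s_{1}s_{4}s_{6}$ exhibiting the word as a palindrome (justifying $v_{2}^{-1}=v_{2}$, which the paper leaves implicit), a cleaner argument for $v_{2}(\omega_{2}-\alpha_{2})$, and the three explicit images in (3)(ii) (all of which check out), where the paper instead observes that $W_{S\setminus\{\alpha_{2},\alpha_{i}\}}$ leaves the coefficients of $\alpha_{2}$ and $\alpha_{i}$ (both equal to $1$) unchanged, so positivity and non-simplicity are immediate.
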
 
	\begin{proof}
		Proof of (1): Let ${v'}_{2}=s_{2}s_{4}s_{5}s_{3}s_{6}s_{4}s_{1}s_{3}s_{5}s_{4}s_{2}.$
		Note that ${{v'}_{2}}^{-1}(\alpha_{0})=-\alpha_{2}.$ Since $\ell({v'}_{2})=\ell(v_{2}),$ by Corollary \ref{corollary 2.1}, we have $v_{2}=v'_{2}.$
		
		Proof of (2):  Follows from the usual calculation. 
		
		Proof of (3)(i): By (2), we have $v_{2}(\alpha_{4})=\alpha_{2}+\alpha_{3}+2\alpha_{4}+\alpha_{5}.$  We observe that the Dynkin subdiagram of  $E_{6}$ corresponding to  $S\setminus \{\alpha_{2}\}$ is of type $A_{5}.$ Since $w_{0,S\setminus\{\alpha_{2}\}}$ is the longest element of $W_{S\setminus\{\alpha_{2}\}},$ we have $w_{0,S\setminus \{\alpha_{2}\}}(\alpha_{1})=-\alpha_{6},$ $w_{0,S\setminus \{\alpha_{2}\}}(\alpha_{3})=-\alpha_{5},$ and $w_{0,S\setminus\{\alpha_{2}\}}(\alpha_{4})=-\alpha_{4}.$ Thus by using Lemma \ref{lemma 3.1}, 
		we have $w_{0,S\setminus\{\alpha_{2}\}}v_{2}(\alpha_{4})=\omega_{2}-(\alpha_{2}+\alpha_{3}+2\alpha_{4}+\alpha_{5}).$
		
		(ii). By (3)(i), we have $w_{0, S\setminus \{\alpha_{2}\}}v_{2}(\alpha_{4})=\alpha_{1}+\alpha_{2}+\alpha_{3}+\alpha_{4}+\alpha_{5}+\alpha_{6}.$ Since support of $w_{0,S\setminus\{\alpha_{2}\}}v_{2}(\alpha_{4})$ contains $\alpha_{i},$ $1\le i\le 6,$ proof of (3)(ii) follows. 
	\end{proof}

	\begin{lem}\label{lemma 5.2} We have the following	
		\begin{enumerate}	
			\item[(1)] $(i).$ $w_{0,S\setminus\{ \alpha_{2}, \alpha_{3}\}}(\alpha_{2})=\alpha_{2}+\alpha_{4}+\alpha_{5}+\alpha_{6}.$\\
			$(ii).$ $w_{0,S\setminus\{\alpha_{2}\}}w_{S\setminus\{ \alpha_{2}, \alpha_{3}\}}(\alpha_{2})=\omega_{2}-(\alpha_{1}+\alpha_{2}+\alpha_{3}+\alpha_{4}).$\\
			$(iii).$ $v_{2}w_{0,S\setminus\{\alpha_{2}\}}w_{S\setminus\{ \alpha_{2}, \alpha_{3}\}}(\alpha_{2})=\omega_{2}-(2\alpha_{2}+\alpha_{3}+2\alpha_{4}+2\alpha_{5}+\alpha_{6})=\alpha_{1}+\alpha_{3}+\alpha_{4}.$
		\end{enumerate}

		\begin{enumerate}
			
			\item [(2)] $(i).$ 
			$w_{S\setminus\{ \alpha_{2}, \alpha_{4}\}}(\alpha_{2})=\alpha_{2}.$\\
			$(ii).$ $w_{0,S\setminus\{\alpha_{2}\}}w_{S\setminus\{ \alpha_{2}, \alpha_{\alpha_{4}}\}}(\alpha_{2})=\omega_{2}-\alpha_{2}.$\\
			$(iii).$ $v_{2}w_{0,S\setminus\{\alpha_{2}\}}w_{S\setminus\{ \alpha_{2}, \alpha_{4}\}}(\alpha_{2})=\omega_{2}-\alpha_{2}.$
		\end{enumerate}
		\begin{enumerate}
			\item [(3)] $(i).$ $w_{S\setminus\{ \alpha_{2}, \alpha_{5}\}}(\alpha_{2})=\alpha_{1}+\alpha_{2}+\alpha_{3}+\alpha_{4}.$\\
			$(ii).$ $w_{0,S\setminus\{\alpha_{2}\}}w_{S\setminus\{ \alpha_{2}, \alpha_{5}\}}(\alpha_{2})=\omega_{2}-(\alpha_{2}+\alpha_{4}+\alpha_{5}+\alpha_{6}).$\\
			$(iii).$ $v_{2}w_{0,S\setminus\{\alpha_{2}\}}w_{S\setminus\{ \alpha_{2}, \alpha_{5}\}}(\alpha_{2})=\omega_{2}-(\alpha_{1}+2\alpha_{2}+2\alpha_{3}+2\alpha_{4}+\alpha_{5})=\alpha_{4}+\alpha_{5}+\alpha_{6}.$

			\item[(4)] $w_{S\setminus\{ \alpha_{2}, \alpha_{i}\}}(\alpha_{i})=\alpha_{1}+\alpha_{3}+\alpha_{4}+\alpha_{5}+\alpha_{6}$  for $i=3,4,5.$
		\end{enumerate}
		
	\end{lem}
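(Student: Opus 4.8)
The plan is to reduce every displayed identity to three pieces of data: (a) the action, on the simple roots $\alpha_2,\alpha_3,\alpha_4,\alpha_5$, of the longest element $w_{0,J}$ of the parabolic subgroup $W_J$ for $J = S\setminus\{\alpha_2,\alpha_i\}$ ($i=3,4,5$), each of which is of type $A$; (b) Lemma \ref{lemma 3.1}, which gives $w_{0,S\setminus\{\alpha_2\}}(\alpha_2)=\alpha_0-\alpha_2=\omega_2-\alpha_2$, together with the fact that $-w_{0,S\setminus\{\alpha_2\}}$ is the order-two diagram automorphism of the $A_5$ subdiagram $\alpha_1\!-\!\alpha_3\!-\!\alpha_4\!-\!\alpha_5\!-\!\alpha_6$, so that $\alpha_1\mapsto-\alpha_6$, $\alpha_3\mapsto-\alpha_5$, $\alpha_4\mapsto-\alpha_4$; and (c) Lemma \ref{lemma 5.3}(1)--(2), which gives $v_2=v_2^{-1}$, $v_2(\alpha_1)=\alpha_5$, $v_2(\alpha_3)=\alpha_6$, $v_2(\alpha_4)=\alpha_2+\alpha_3+2\alpha_4+\alpha_5$, $v_2(\omega_2-\alpha_2)=\omega_2-\alpha_2$, and hence also $v_2(\alpha_5)=\alpha_1$, $v_2(\alpha_6)=\alpha_3$ by self-inversiveness. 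Throughout one uses $\alpha_0=\omega_2=\alpha_1+2\alpha_2+2\alpha_3+3\alpha_4+2\alpha_5+\alpha_6$ to convert between the two forms of the final answers.

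\textbf{The parts (i), and part (4).} First I would record the relevant type-$A$ sub-diagrams of $E_6$: $S\setminus\{\alpha_2,\alpha_3\}$ is $A_1\times A_3$ with components $\{\alpha_1\}$ and $\{\alpha_4,\alpha_5,\alpha_6\}$; $S\setminus\{\alpha_2,\alpha_4\}$ is $A_2\times A_2$ with components $\{\alpha_1,\alpha_3\}$ and $\{\alpha_5,\alpha_6\}$; and $S\setminus\{\alpha_2,\alpha_5\}$ is $A_3\times A_1$ with components $\{\alpha_1,\alpha_3,\alpha_4\}$ and $\{\alpha_6\}$. Since $\alpha_2$ is adjacent in the $E_6$ diagram only to $\alpha_4$, the root $-\alpha_2$ is dominant for each such $J$, the node of $J$ nearest $\alpha_2$ (when it exists) is an end node of its component, and the standard longest-element computation in type $A$ (namely $\varpi_1+\varpi_k=\alpha_1+\cdots+\alpha_k$ in $A_k$) yields $w_{0,J}(\alpha_2)=\alpha_2+(\text{sum of simple roots in the component of }J\text{ meeting }\alpha_2)$. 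This gives $w_{0,S\setminus\{\alpha_2,\alpha_3\}}(\alpha_2)=\alpha_2+\alpha_4+\alpha_5+\alpha_6$, $w_{0,S\setminus\{\alpha_2,\alpha_4\}}(\alpha_2)=\alpha_2$ (empty component), and $w_{0,S\setminus\{\alpha_2,\alpha_5\}}(\alpha_2)=\alpha_1+\alpha_2+\alpha_3+\alpha_4$, which are (1)(i), (2)(i), (3)(i). The identical computation with $\alpha_i$ ($i=3,4,5$) in place of $\alpha_2$ — now $\alpha_i$ is adjacent to two nodes of $J$, both end nodes of their components — produces $w_{0,S\setminus\{\alpha_2,\alpha_i\}}(\alpha_i)=\alpha_1+\alpha_3+\alpha_4+\alpha_5+\alpha_6$ in all three cases, which is part (4).

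\textbf{The parts (ii) and (iii).} For each (ii) I would apply $w_{0,S\setminus\{\alpha_2\}}$ to the root obtained in (i), using the $A_5$ diagram automorphism of (b) together with $w_{0,S\setminus\{\alpha_2\}}(\alpha_2)=\omega_2-\alpha_2$; for instance $w_{0,S\setminus\{\alpha_2\}}(\alpha_2+\alpha_4+\alpha_5+\alpha_6)=(\omega_2-\alpha_2)-\alpha_4-\alpha_3-\alpha_1=\omega_2-(\alpha_1+\alpha_2+\alpha_3+\alpha_4)$, which is (1)(ii), and similarly for (2)(ii), (3)(ii). For each (iii) I would then apply $v_2$ using the formulas of (c), splitting off the $v_2$-fixed summand $\omega_2-\alpha_2$, and finally rewrite the result via the expansion of $\omega_2=\alpha_0$ to reach the simplified positive-root form; e.g. in (3)(iii) one needs $v_2(\alpha_4+\alpha_5+\alpha_6)=\alpha_1+\alpha_2+2\alpha_3+2\alpha_4+\alpha_5$, whence $v_2(\omega_2-(\alpha_2+\alpha_4+\alpha_5+\alpha_6))=\omega_2-(\alpha_1+2\alpha_2+2\alpha_3+2\alpha_4+\alpha_5)=\alpha_4+\alpha_5+\alpha_6$. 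There is no conceptual obstacle; the work is entirely bookkeeping. The points requiring the most care are keeping the orientation of the $A_5$ subdiagram straight (so that $-w_{0,S\setminus\{\alpha_2\}}$ swaps $\alpha_1\leftrightarrow\alpha_6$ and $\alpha_3\leftrightarrow\alpha_5$ and fixes $\alpha_4$), and correctly extracting $v_2(\alpha_5)$ and $v_2(\alpha_6)$ from $v_2=v_2^{-1}$ before carrying out the (iii) computations in (1) and (3).
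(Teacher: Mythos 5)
Your proposal is correct and follows essentially the same route as the paper: the parts (i) and (4) are obtained from the type-$A$ components of the subdiagrams (the paper phrases this via Lemma \ref{lem 2.3} applied to the component containing the relevant node, which is the same highest-root computation you invoke), the parts (ii) use the $A_5$ diagram involution of $w_{0,S\setminus\{\alpha_2\}}$ together with Lemma \ref{lemma 3.1}, and the parts (iii) use Lemma \ref{lemma 5.3}. Your explicit extraction of $v_2(\alpha_5)=\alpha_1$ and $v_2(\alpha_6)=\alpha_3$ from $v_2=v_2^{-1}$ is a detail the paper leaves implicit, and all the resulting root arithmetic checks out.
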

	\begin{proof}
		Proof of (1): (i) Note that the Dynkin subdiagram of $E_{6},$ corresponding to the subset $S\setminus \{\alpha_{3}\}$ of $S$ (see figure 2):

		\vspace*{2cm}
		\begin{picture}(10,0)
			\thicklines
			\put(.2,0){\circle*{0.2}}
			\put(0,-0.5){$\alpha_{1}$}
			\put(2.2,0){\circle*{0.2}}
			\put(2,-0.5){$\alpha_{4}$}
			\put(2.2,0){\line(1,0){1}}
			\put(3.2,0){\circle*{0.2}}
			\put(3.1,-0.5){$\alpha_{5}$}
			\put(4.2,0){\circle*{0.2}}
			\put(4,-0.5){$\alpha_{6}$}
			\put(3.2,0){\line(1,0){1}}
			\put(2.2,1){\circle*{0.2}}
			\put(2.1,1.3){$\alpha_{2}$}
			\put(2.2,0){\line(0,1){1}}
			
		\end{picture}
		\vspace{1cm}
		
		Let $I=S\setminus\{\alpha_{3}\}.$
		Now we observe that $w_{0, S\setminus \{\alpha_{2}, \alpha_{3}\}}(\alpha_{2})=w_{0,I\setminus\{\alpha_{2}\}}(\alpha_{2}).$We note that the connected component of the Dynkin subdiagram associated to $I,$ containing $\alpha_{2}$ is of type $A_{4}.$  Since $\alpha_{2}$ is  minuscule  in type $A_{4},$  by Lemma \ref{lem 2.3}, we have $w_{0,I\setminus\{\alpha_{2}\}}(\alpha_{2})=\alpha_{2}+\alpha_{4}+\alpha_{5}+\alpha_{6}.$
		
		(ii). Note that the  Dynkin subdiagram of  $E_{6}$ corresponding to  $S\setminus \{\alpha_{2}\}$  is of type $A_{5}.$ Since $w_{0,S\setminus\{\alpha_{2}\}}$ is the longest element of $W_{S\setminus\{\alpha_{2}\}},$ we have $w_{0,S\setminus \{\alpha_{2}\}}(\alpha_{1})=-\alpha_{6},$ $w_{0,S\setminus \{\alpha_{2}\}}(\alpha_{3})=-\alpha_{5},$ and $w_{0,S\setminus\{\alpha_{2}\}}(\alpha_{4})=-\alpha_{4}.$ By using the above discussion together with Lemma \ref{lemma 3.1} proof of (ii) follows.
		
		(iii). By (ii), we have $w_{0,S\setminus\{\alpha_{2}\}}w_{0,S\setminus\{\alpha_{2},\alpha_{3}\}}(\alpha_{2})=\omega_{2}-(\alpha_{1}+\alpha_{2}+\alpha_{3}+\alpha_{4}).$ Therefore, by using Lemma \ref{lemma 5.3}(1),(2), and  we get $v_{2}w_{0,S\setminus\{\alpha_{2}\}}w_{0, S\setminus\{\alpha_{2},\alpha_{3}\}}=\omega_{2}-(2\alpha_{2}+\alpha_{3}+2\alpha_{4}+2\alpha_{5}+\alpha_{6}).$ Further, simplifying we have $v_{2}w_{S\setminus\{\alpha_{2}\}}w_{0, S\setminus\{\alpha_{2},\alpha_{3}\}}=\alpha_{1}+\alpha_{3}+\alpha_{4}.$
		
		Proof of (2)(i): Next we consider the Dynkin subdiagram of $E_{6},$ corresponding to the subset $S\setminus\{\alpha_{4}\}$ of $S:$
		
		\vspace*{2cm}
		\begin{picture}(10,0)
			\thicklines
			\put(.2,0){\circle*{0.2}}
			\put(0,-0.5){$\alpha_{1}$}
			\put(0.2,0){\line(1,0){1}}
			\put(1.2,0){\circle*{0.2}}
			\put(1,-0.5){$\alpha_{3}$}
			\put(3.2,0){\circle*{0.2}}
			\put(3.1,-0.5){$\alpha_{5}$}
			\put(4.2,0){\circle*{0.2}}
			\put(4,-0.5){$\alpha_{6}$}
			\put(3.2,0){\line(1,0){1}}
			\put(2.2,1){\circle*{0.2}}
			\put(2.1,1.3){$\alpha_{2}$}
		\end{picture}
		\vspace{1cm}
		
		Let $I=S\setminus \{\alpha_{4}\}.$ Then we observe that $w_{0,S\setminus \{\alpha_{2}, \alpha_{4}\}}(\alpha_{2})=w_{0,I\setminus\{\alpha_{2}\}}(\alpha_{2}).$ Further, we note that the connected component of the Dynkin subdiagram associated to $I,$ containing $\alpha_{2}$ is of type $A_{1}.$ Since $\alpha_{2}$ is  minuscule  in type $A_{1}$, by Lemma \ref{lem 2.3}, we have $w_{0,I\setminus\{\alpha_{2}\}}(\alpha_{2})=\alpha_{2}.$ 
		
		(ii). Proof is similar to the proof of (1)(ii).
		
		(iii). By using Lemma \ref{lemma 5.3}(2) and (ii), proof of (iii) follows.

		Proof of (3)(i): 
		Next we consider the Dynkin subdiagram of $E_{6},$ corresponding to the subset $S\setminus\{\alpha_{5}\}$ of $S:$ 
		
		\vspace*{2cm}
		\begin{picture}(10,0)
			\thicklines
			\put(.2,0){\circle*{0.2}}
			\put(0,-0.5){$\alpha_{1}$}
			\put(0.2,0){\line(1,0){1}}
			\put(1.2,0){\circle*{0.2}}
			\put(1.2,0){\line(1,0){1}}
			\put(1,-0.5){$\alpha_{3}$}
			\put(2.2,0){\circle*{0.2}}
			\put(2,-0.5){$\alpha_{4}$}
			\put(4.2,0){\circle*{0.2}}
			\put(4,-0.5){$\alpha_{6}$}
			\put(2.2,1){\circle*{0.2}}
			\put(2.1,1.3){$\alpha_{2}$}
			\put(2.2,0){\line(0,1){1}}
		\end{picture}
		\vspace{1cm}
		
		Let $I=S\setminus \{\alpha_{5}\}.$ Then we observe that $w_{0,S\setminus \{\alpha_{2}, \alpha_{5}\}}(\alpha_{2})=w_{0,I\setminus\{\alpha_{2}\}}(\alpha_{2}).$ Further, we note that the connected component of the Dynkin subdiagram associated to $I,$ containing $\alpha_{2}$ is of type $A_{4}.$ Since $\alpha_{2}$ is  minuscule  in type $A_{4}$, by Lemma \ref{lem 2.3}, we have $w_{0,I\setminus\{\alpha_{2}\}}(\alpha_{2})=\alpha_{1}+\alpha_{2}+\alpha_{3}+\alpha_{4}.$ 
		
		(ii). Proof is similar to the proof of (1)(ii). 
		
		(iii). By using Lemma \ref{lemma 5.3}(2) and (ii), proof of (iii) follows.

		Proof of (4):
		For fix an $i$ in $\{3,4,5\},$ let $I=S\setminus \{\alpha_{2}\}.$ Then we have $w_{0, S\setminus\{\alpha_{2}, \alpha_{i}\}}(\alpha_{i})=w_{0,I\setminus\{\alpha_{i}\}}(\alpha_{i}).$ Then we observe that the Dynkin subdiagram associated to $I$ is of type $A_{5}.$ Since $\alpha_{i}$ is  minuscule  in type $A_{5},$ by Lemma \ref{lem 2.3}, we have $w_{0, I\setminus\{\alpha_{i}\}}(\alpha_{i})=\alpha_{1}+\alpha_{3}+\alpha_{4}+\alpha_{5}+\alpha_{6}.$
		
	\end{proof}
	\begin{lem}\label{lemma 5.5}Let $w_{i}=w_{0,S\setminus\{\alpha_{2}, \alpha_{i}\}}w_{0, S\setminus\{\alpha_{2}\}}v_{2}$ for $i\neq 1,6.$  Then we have $w_{i}^{-1}(\alpha_{j})$ is a positive root for $j\neq i,$ and $w_{i}^{-1}(\alpha_{i})$ is negative root.
	\end{lem}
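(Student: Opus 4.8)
The plan is to evaluate $w_i^{-1}$ directly on each simple root. Using that $v_2$ is an involution (Lemma \ref{lemma 5.3}(1)) and that longest elements of standard parabolic subgroups are involutions, one has $w_i^{-1}=v_2\, w_{0,S\setminus\{\alpha_2\}}\, w_{0,S\setminus\{\alpha_2,\alpha_i\}}$. Write $J:=S\setminus\{\alpha_2\}$, which is of type $A_5$ with nodes $\alpha_1,\alpha_3,\alpha_4,\alpha_5,\alpha_6$, and $J_i:=J\setminus\{\alpha_i\}=S\setminus\{\alpha_2,\alpha_i\}$. I would split the computation of $w_i^{-1}(\alpha_j)$ into three cases: $\alpha_j\in J_i$; $\alpha_j=\alpha_2$; and $\alpha_j=\alpha_i$. (If $i=2$ then $J_i=J$ and $w_2=v_2$, so the three claims reduce at once to Lemma \ref{lemma 5.3}(2) together with $v_2(\alpha_2)=-\alpha_0$; hence one may assume $i\in\{3,4,5\}$.)

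For $\alpha_j\in J_i$ the argument is structural: $w_{0,J_i}$ acts on the simple roots of $J_i$ as minus a Dynkin-diagram automorphism $\tau$ of $J_i$, so $w_{0,J_i}(\alpha_j)=-\alpha_{\tau(j)}$ with $\tau(j)\in J_i\subseteq J$; then $w_{0,J}(\alpha_{\tau(j)})=-\alpha_{\sigma(\tau(j))}$, where $\sigma$ is the flip of the $A_5$-diagram $J$, which permutes the nodes of $J$ and in particular keeps $\sigma(\tau(j))\neq 2$. Hence $w_i^{-1}(\alpha_j)=v_2(\alpha_{\sigma\tau(j)})$. From Lemma \ref{lemma 5.3}(2), supplemented by $v_2^{-1}=v_2$ (which also gives $v_2(\alpha_5)=\alpha_1$ and $v_2(\alpha_6)=\alpha_3$), one reads off that $v_2$ sends every simple root other than $\alpha_2$ to a positive root; since $\sigma\tau(j)\neq 2$, this gives $w_i^{-1}(\alpha_j)>0$.

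For $\alpha_j=\alpha_2$ there is no such shortcut, since $\alpha_2$ lies outside both $J$ and $J_i$; here I would quote Lemma \ref{lemma 5.2}, whose parts (1)(iii), (2)(iii), (3)(iii) evaluate $w_i^{-1}(\alpha_2)=v_2\, w_{0,J}\, w_{0,J_i}(\alpha_2)$ for $i=3,4,5$ to $\alpha_1+\alpha_3+\alpha_4$, $\omega_2-\alpha_2$, and $\alpha_4+\alpha_5+\alpha_6$, each visibly positive. Finally, for $\alpha_j=\alpha_i$ (which lies in $J$ but not $J_i$), Lemma \ref{lemma 5.2}(4) gives $w_{0,J_i}(\alpha_i)=\alpha_1+\alpha_3+\alpha_4+\alpha_5+\alpha_6$, the highest root of the $A_5$-subsystem $J$; being $\sigma$-invariant it is sent by $w_{0,J}$ to its negative, so $w_i^{-1}(\alpha_i)=-v_2(\alpha_1+\alpha_3+\alpha_4+\alpha_5+\alpha_6)$, and expanding via Lemma \ref{lemma 5.3}(2) shows this equals $-(\alpha_1+\alpha_2+2\alpha_3+2\alpha_4+2\alpha_5+\alpha_6)$, the negative of a positive root. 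Combining the three cases proves the lemma.

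The only genuinely computational point is the case $\alpha_j=\alpha_2$: because $\alpha_2$ is outside $J$ and $J_i$, one must track the partial orbit $w_{0,J_i}(\alpha_2)$ inside the $A_5$-subsystem through $w_{0,J}$ and then through the length-$11$ element $v_2$, which is precisely the case-by-case bookkeeping already recorded in Lemma \ref{lemma 5.2}. The one other thing requiring care is confirming that $v_2$ is positive on all simple roots except $\alpha_2$; that follows from Lemma \ref{lemma 5.3}(2) once one also uses $v_2^2=\mathrm{id}$ to compute $v_2(\alpha_5),v_2(\alpha_6)$ and to identify $v_2(\alpha_2)=-\alpha_0$.
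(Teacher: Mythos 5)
Your proof is correct and follows essentially the same route as the paper's: the same factorization $w_i^{-1}=v_2\,w_{0,S\setminus\{\alpha_2\}}\,w_{0,S\setminus\{\alpha_2,\alpha_i\}}$, the same reduction of $i=2$ to $w_2=v_2$, the same three-way case split on $\alpha_j$, and the same appeals to Lemma \ref{lemma 5.2} and Lemma \ref{lemma 5.3}(2) (with the involution $v_2^{-1}=v_2$ supplying $v_2(\alpha_5)=\alpha_1$ and $v_2(\alpha_6)=\alpha_3$). The only harmless deviation is in the case $\alpha_j=\alpha_i$, where you compute the image explicitly via Lemma \ref{lemma 5.2}(4) as $-(\alpha_1+\alpha_2+2\alpha_3+2\alpha_4+2\alpha_5+\alpha_6)$, while the paper argues structurally that $w_{0,S\setminus\{\alpha_2\}}w_{0,S\setminus\{\alpha_2,\alpha_i\}}(\alpha_i)$ is a negative root supported away from $\alpha_2$ and hence remains negative under $v_2$.
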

	\begin{proof}
		Note that for $i=2$ we have $w_{2}=v_{2}.$ Then by Lemma \ref{lemma 5.3}(1),(2), we are done. For $i\neq 1,2,6,$ let  $w_{0,S\setminus \{\alpha_{2}, \alpha_{i}\}}(\alpha_{i})=\beta.$ Then $\beta$ is a positive root whose support does not contain $\alpha_{2}.$  Since $w_{0, S\setminus \{\alpha_{2}\}}$ is the longest element of $W_{S\setminus \{\alpha_{2}\}},$  $w_{0,S\setminus \{\alpha_{2}\}}(\beta)$ is a negative root whose support does not contain $\alpha_{2}.$ Further, since the support of $w_{0,S\setminus \{\alpha_{2}\}}(\beta)$ does not contain $\alpha_{2},$ by Lemma \ref{lemma 5.3}(2), we have $v_{2}w_{0,S\setminus \{\alpha_{2}\}}(\beta)$ is a negative root. Hence, we have $w_{i}^{-1}(\alpha_{i})$ is a negative root for $i\neq 1,2,6.$ 
		
		On the other hand, for $i\neq 1,2,6$ and $j\neq 2,i,$  $w_{0,S\setminus \{\alpha_{2}, \alpha_{i}\}}(\alpha_{j})=-\alpha_{k},$ where $\alpha_{k}$ is a simple root different from $\alpha_{2}.$ Therefore, $w_{0,S\setminus \{\alpha_{2}\}}(-\alpha_{k})=\alpha_{l}$ for some $l\neq 2.$ Further, since $\alpha_{l}$ is different from $\alpha_{2},$ by Lemma \ref{lemma 5.3}(2), $v_{2}(\alpha_{l})$ is a positive root. Hence, $w_{i}^{-1}(\alpha_{j})$ is a positive root when $i\neq 1,2,6$ and $j\neq 2,i.$ Moreover, by Lemma \ref{lemma 5.2}, we have  $w_{i}^{-1}(\alpha_{2})$ is a positive root for $i\neq 2,1,6.$ Hence, we have $w_{i}^{-1}(\alpha_{j})$ is a positive root for $i\neq 1,6$ and $j\neq i.$
	\end{proof}
	
	\begin{proof}[{\bf Proof of Proposition \ref{proposition 5.4}:}]
		We note that $\omega_{i}$ is not minuscule if and only if $i\neq 1,6.$ Assume that $\omega_{i}$ is  not minuscule. Recall that  $w_{i}=w_{0,S\setminus \{\alpha_{2}, \alpha_{i}\}} w_{0, S\setminus \{\alpha_{2}\}}v_{2}.$ By Lemma \ref{lemma 5.3}(3), we conclude  that $w_{i}(\alpha_{4})$ is a non simple positive root.
		On the other hand, by Lemma \ref{lemma 5.5}, $w_{i}^{-1}(\alpha_{i})$ is a negative root and $w_{i}^{-1}(\alpha_{j})$ is a positive root for $j\neq i.$ Thus $P_{i}$ is the stabilizer of $X_{P_{4}}(w_{i})$ in $G.$ Since $\alpha_{0}=\omega_{2},$ and $v_{2}^{-1}(\alpha_{0})=-\alpha_{2},$  $w_{i}^{-1}(\alpha_{0})=v_{2}^{-1}(\alpha_{0})$ (as$~ w_{0,S\setminus\{\alpha_{2}\}}w_{0,S\setminus\{\alpha_{2},\alpha_{i}\}}(\alpha_{0})=\alpha_{0}$) is a negative root. Therefore, by using Theorem \ref{thm1} the natural homomorphism  $P_{i}\longrightarrow Aut^0(X_{P_{4}}(w_{i}))$ is an isomorphism of algebraic groups.  
	\end{proof}
	\section{$G$ is of type $E_{7}$}
	We  note that by Lemma \ref{lem 3.2}, $\omega_{i}$ is not minuscule of $E_{7}$ if and only if  $i\neq7.$
	In this section, our goal is to prove the following proposition:
	\begin{prop}\label{Proposition 5.7} Assume that $\omega_{i}$ is not minuscule. Then there exists a Schubert variety $X_{P_{3}}(w_{i})$ in $G/P_{3}$ such that $P_{i}=Aut^0(X_{P_{3}}(w_{i})).$	
	\end{prop}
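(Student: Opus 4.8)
The plan is to reproduce, in type $E_{7}$, the argument of Proposition \ref{proposition 5.4}, with the node $\alpha_{1}$ (for which $\alpha_{0}=\omega_{1}$) playing the role that $\alpha_{2}$ plays in $E_{6}$, and its unique neighbour $\alpha_{3}$ playing the role of $\alpha_{4}$. By Lemma \ref{lem 3.2} the non-minuscule fundamental weights are precisely the $\omega_{i}$ with $i\neq 7$. Let $v_{1}\in W$ be the unique element of minimal length with $v_{1}^{-1}(\alpha_{0})=-\alpha_{1}$ (Corollary \ref{corollary 2.1}), so that $\ell(v_{1})=\mathrm{ht}(\alpha_{0})=17$, and for each $i\neq 7$ put
$$w_{i}=w_{0,\,S\setminus\{\alpha_{1},\alpha_{i}\}}\;w_{0,\,S\setminus\{\alpha_{1}\}}\;v_{1},$$
so that $w_{1}=v_{1}$. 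Here $w_{0,\,S\setminus\{\alpha_{1}\}}$ is the longest element of the type $D_{6}$ Weyl group obtained by deleting $\alpha_{1}$ from the $E_{7}$ diagram, and each $w_{0,\,S\setminus\{\alpha_{1},\alpha_{i}\}}$ is the longest element of the classical (generally reducible) subsystem spanned by $S\setminus\{\alpha_{1},\alpha_{i}\}$.

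The first step is the $E_{7}$ analogue of Lemma \ref{lemma 5.3}: write down an explicit reduced word for $v_{1}$ (found greedily, repeatedly left-multiplying by a simple reflection $s_{j}$ with $\langle v^{-1}(\alpha_{0}),\alpha_{j}\rangle>0$), read off $v_{1}(\alpha_{j})$ for all $j$, and check that $v_{1}\in W^{S\setminus\{\alpha_{1}\}}$ with $v_{1}(\alpha_{1})=-\alpha_{0}$; the same computation shows $v_{1}^{-1}\in W^{S\setminus\{\alpha_{1}\}}$ as well (I expect $v_{1}$ to be self-inverse, as $v_{2}$ is for $E_{6}$), so that both $v_{1}$ and $v_{1}^{-1}$ carry positive roots supported off $\alpha_{1}$ to positive roots. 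Together with Lemma \ref{lemma 3.1}, which gives $w_{0,S\setminus\{\alpha_{1}\}}(\alpha_{1})=\alpha_{0}-\alpha_{1}$, this shows (as in Lemma \ref{lemma 5.3}(3)) that $w_{0,S\setminus\{\alpha_{1}\}}v_{1}(\alpha_{3})$ is a positive root of full support; since $w_{0,S\setminus\{\alpha_{1},\alpha_{i}\}}$ leaves the $\alpha_{1}$- and $\alpha_{i}$-coordinates unchanged, $w_{i}(\alpha_{3})$ is then a non-simple positive root for every $i\neq 7$. In particular $w_{i}\in W^{S\setminus\{\alpha_{3}\}}$, so $X_{P_{3}}(w_{i})$ is a Schubert variety properly contained in $G/P_{3}$.

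The second step is the analogue of Lemmas \ref{lemma 5.2} and \ref{lemma 5.5}: show $w_{i}^{-1}(\alpha_{i})$ is a negative root while $w_{i}^{-1}(\alpha_{j})$ is a positive root for all $j\neq i$, which identifies the stabiliser $P_{w_{i}}$ of $X_{P_{3}}(w_{i})$ in $G$ with $P_{i}$. Writing $w_{i}^{-1}=v_{1}^{-1}\,w_{0,S\setminus\{\alpha_{1}\}}\,w_{0,S\setminus\{\alpha_{1},\alpha_{i}\}}$: for $j\neq 1,i$ one copies Lemma \ref{lemma 5.5}, pushing $\alpha_{j}$ through the three factors (it becomes $-\alpha_{k}$, then $\alpha_{l}$ with $k,l\neq 1$, then a positive root under $v_{1}^{-1}$); for $j=i$ one uses that $w_{0,S\setminus\{\alpha_{1},\alpha_{i}\}}(\alpha_{i})$ is a positive root supported off $\alpha_{1}$ --- equal to the highest root of $D_{6}$ when $\alpha_{i}$ is a minuscule node there (via Lemma \ref{lem 2.3}, as in Lemma \ref{lemma 5.2}) and computed directly for the remaining $i\in\{4,5,6\}$, which is the one place the $D_{6}$ subsystem here behaves differently from the $A_{5}$ subsystem in the $E_{6}$ argument --- so that $w_{0,S\setminus\{\alpha_{1}\}}$ makes it negative and $v_{1}^{-1}$ keeps it negative; the case $j=1$ (where $w_{1}=v_{1}$) is read off from the explicit word. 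Finally, since $\alpha_{0}=\omega_{1}$ is orthogonal to all $\alpha_{j}$ with $j\neq 1$, it is fixed by $W_{S\setminus\{\alpha_{1}\}}$, whence $w_{i}^{-1}(\alpha_{0})=v_{1}^{-1}(\alpha_{0})=-\alpha_{1}<0$, and Theorem \ref{thm1}(2) yields that $P_{i}=P_{w_{i}}\longrightarrow\mathrm{Aut}^{0}(X_{P_{3}}(w_{i}))$ is an isomorphism of algebraic groups, proving the Proposition. The only genuine obstacle is computational bookkeeping: producing the length-$17$ reduced word for $v_{1}$ and tracking six simple roots through the three factors of $w_{i}$; conceptually nothing is needed beyond Lemmas \ref{lemma 3.1} and \ref{lem 2.3} and Theorem \ref{thm1}, exactly as for $E_{6}$.
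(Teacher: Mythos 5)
Your outline correctly identifies the shape of the paper's argument, and several of your structural claims about $v_{1}$ are right: since $s_{j}(\alpha_{0})=\alpha_{0}$ for $j\neq 1$ (as $\alpha_{0}=\omega_{1}$), minimality of $v_{1}$ forces $v_{1}^{-1}(\alpha_{j})>0$ for all $j\neq 1$ and hence $v_{1}^{-1}(\alpha_{1})<0$, which settles the cases $j=i$ and $j\neq 1,i$ of the stabiliser computation exactly as you describe; one can also check that $w_{0,S\setminus\{\alpha_{1}\}}v_{1}(\alpha_{3})=\alpha_{1}+\alpha_{2}+\alpha_{3}+2\alpha_{4}+2\alpha_{5}+2\alpha_{6}+\alpha_{7}$ has full support, so your $w_{i}(\alpha_{3})$ is indeed a non-simple positive root. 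But note that the paper does \emph{not} use $v_{1}$: it uses $v_{4}$, the minimal element with $v_{4}^{-1}(\alpha_{0})=-\alpha_{4}$, and this substitution is not innocuous. The one step you dismiss as ``computational bookkeeping'' --- showing $w_{i}^{-1}(\alpha_{1})>0$ for $i\neq 1,7$, the analogue of Lemma \ref{lemma 5.9} --- is precisely the step that depends on the choice of $v$, and for your $v_{1}$ it fails at $i=6$.

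Concretely: by Lemma \ref{lemma 5.9}(5)(i), $w_{0,S\setminus\{\alpha_{1},\alpha_{6}\}}(\alpha_{1})=\alpha_{1}+\alpha_{2}+2\alpha_{3}+2\alpha_{4}+\alpha_{5}$, the highest root of the $D_{5}$ subsystem on $\{\alpha_{1},\dots,\alpha_{5}\}$. A direct computation with any reduced word for $v_{1}$ (e.g.\ built from a root chain $\alpha_{1}\uparrow\alpha_{0}$) gives $v_{1}(\alpha_{3})=\alpha_{1}+\alpha_{2}+2\alpha_{3}+2\alpha_{4}+\alpha_{5}$ as well --- the same root. Since $w_{0,S\setminus\{\alpha_{1}\}}(\beta)=\alpha_{0}-\beta$ for any positive root $\beta$ with $\alpha_{1}$-coefficient $1$, your $w_{6}$ satisfies
\[
w_{6}^{-1}(\alpha_{1})\;=\;v_{1}^{-1}\bigl(\alpha_{0}-v_{1}(\alpha_{3})\bigr)\;=\;v_{1}^{-1}(\alpha_{0})-\alpha_{3}\;=\;-\alpha_{1}-\alpha_{3}\;<\;0 .
\]
Hence $s_{1}$ stabilises $X_{P_{3}}(w_{6})$, the stabiliser contains $P_{\{\alpha_{1},\alpha_{6}\}}$, and since $w_{6}^{-1}(\alpha_{0})<0$ Theorem \ref{thm1} gives $Aut^{0}(X_{P_{3}}(w_{6}))\supseteq P_{\{\alpha_{1},\alpha_{6}\}}\neq P_{6}$. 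So your construction does not prove the proposition for $\omega_{6}$. This is why the paper works with $v_{4}$ and verifies the five values $v_{4}^{-1}w_{0,S\setminus\{\alpha_{1}\}}w_{0,S\setminus\{\alpha_{1},\alpha_{i}\}}(\alpha_{1})$ one by one in Lemma \ref{lemma 5.9}: the positivity there is a genuine constraint on the choice of $v$, not bookkeeping, and to repair your argument you must either replace $v_{1}$ (the paper's $v_{4}$ works) or replace $w_{6}$ by a different representative.
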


	We recall that by Corollary \ref{corollary 2.1}, there exists a unique element $v_{4}$ in $W$ of minimal length such that $v_{4}^{-1}(\alpha_{0})=-\alpha_{4}$. We note that $w_{0,S\setminus \{\alpha_{1}\}}$ is the longest element of the Dynkin subdiagram of $E_{7}$ corresponding to the subset  $S\setminus\{\alpha_{1}\}$ of $S.$ Thus we have $w_{0, S\setminus \{\alpha_{1}\}}(\alpha_{i})=-\alpha_{i}$ for $i\neq1.$
	\begin{lem}\label{lemma 5.8}
		Let $v_{4}$ be as above. Then we have the following
		\begin{enumerate}
			\item [(1)] $v_{4}=s_{1}s_{3}s_{4}s_{5}s_{2}s_{4}s_{3}s_{6}s_{5}s_{4}s_{1}s_{2}s_{3}s_{7}s_{6}s_{5}s_{4}.$
		\end{enumerate}
		\begin{enumerate}
			\item[(2)]
			$(i).$ $v_{4}(\alpha_{3})=\alpha_{1}+\alpha_{2}+\alpha_{3}+2\alpha_{4}+2\alpha_{5}+\alpha_{6}+\alpha_{7}.$\\
			$(ii).$ 
			$w_{0, S\setminus \{\alpha_{1}\}}v_{4}(\alpha_{3})=\alpha_{1}+\alpha_{2}+2\alpha_{3}+\alpha_{4}+\alpha_{5}+\alpha_{6}.$\\
			$(iii).$ 
			$w_{0, S\setminus \{\alpha_{1}, \alpha_{i}\}}w_{0, S\setminus \{\alpha_{1}\}}v_{4}(\alpha_{3})$ is a non simple positive root for $i\neq 7.$
		\end{enumerate}
		\begin{enumerate}
			\item [(3)] $v_{4}^{-1}(\alpha_{1})=-(\alpha_{1}+2\alpha_{2}+2\alpha_{3}+4\alpha_{4}+3\alpha_{5}+2\alpha_{6}+\alpha_{7}),$ $v_{4}^{-1}(\alpha_{2})=\alpha_{7},$\\ $v_{4}^{-1}(\alpha_{3})=\alpha_{2}+\alpha_{4}+\alpha_{5},$ $v_{4}^{-1}(\alpha_{4})=\alpha_{6},$ $v_{4}^{-1}(\alpha_{5})=\alpha_{3}+\alpha_{4}+\alpha_{5},$ $v_{4}^{-1}(\alpha_{6})=\alpha_{1},$ $v_{4}^{-1}(\alpha_{7})=\alpha_{2}+\alpha_{3}+\alpha_{4}.$
			
		\end{enumerate}

	\end{lem}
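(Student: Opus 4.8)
The plan is to pin down $v_{4}$ explicitly in part $(1)$ and then read off parts $(2)$ and $(3)$ by tracking roots through this word. For $(1)$, set $v'_{4}=s_{1}s_{3}s_{4}s_{5}s_{2}s_{4}s_{3}s_{6}s_{5}s_{4}s_{1}s_{2}s_{3}s_{7}s_{6}s_{5}s_{4}$. I would verify directly that $(v'_{4})^{-1}(\alpha_{0})=-\alpha_{4}$ — equivalently, that applying $s_{4},s_{5},s_{6},s_{7},s_{3},s_{2},s_{1},s_{4},s_{5},s_{6},s_{3},s_{4},s_{2},s_{5},s_{4},s_{3},s_{1}$ to $\alpha_{0}$ in turn returns $-\alpha_{4}$. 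The displayed word has $17$ letters, and in type $E_{7}$ the height of $\alpha_{0}=2\alpha_{1}+2\alpha_{2}+3\alpha_{3}+4\alpha_{4}+3\alpha_{5}+2\alpha_{6}+\alpha_{7}$ equals $17$; by Corollary \ref{corollary 2.1} this is exactly the minimal length of an element $w$ with $w^{-1}(\alpha_{0})=-\alpha_{4}$, and such a minimal-length element is unique. Hence $v'_{4}$ is reduced, $\ell(v'_{4})=\ell(v_{4})$, and $v_{4}=v'_{4}$, exactly as in the proof of Lemma \ref{lemma 5.3}$(1)$.

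Parts $(2)(i)$ and $(3)$ are then finite root-system computations using this word. For $(3)$ I would apply the reversed word $v_{4}^{-1}=s_{4}s_{5}s_{6}s_{7}s_{3}s_{2}s_{1}s_{4}s_{5}s_{6}s_{3}s_{4}s_{2}s_{5}s_{4}s_{3}s_{1}$ to each simple root $\alpha_{j}$; a built-in check is that the seven outputs must satisfy $2v_{4}^{-1}(\alpha_{1})+2v_{4}^{-1}(\alpha_{2})+3v_{4}^{-1}(\alpha_{3})+4v_{4}^{-1}(\alpha_{4})+3v_{4}^{-1}(\alpha_{5})+2v_{4}^{-1}(\alpha_{6})+v_{4}^{-1}(\alpha_{7})=v_{4}^{-1}(\alpha_{0})=-\alpha_{4}$, and that each output $\beta$ has $(\beta,\beta)=(\alpha_{1},\alpha_{1})$ (the rootness test in the simply-laced case). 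For $(2)(i)$ I would either run the analogous computation for $v_{4}$ itself, or, more cheaply, solve the $7\times 7$ linear system $\sum_{j}x_{j}\,v_{4}^{-1}(\alpha_{j})=\alpha_{3}$ from the data of $(3)$; this gives $x=(1,1,1,2,2,1,1)$, hence $v_{4}(\alpha_{3})=\alpha_{1}+\alpha_{2}+\alpha_{3}+2\alpha_{4}+2\alpha_{5}+\alpha_{6}+\alpha_{7}$.

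For $(2)(ii)$ I would use that $S\setminus\{\alpha_{1}\}$ is a Dynkin subdiagram of type $D_{6}$, so its longest element sends every root in its span to its negative (as recorded just before the lemma, $w_{0,S\setminus\{\alpha_{1}\}}(\alpha_{i})=-\alpha_{i}$ for $i\neq1$), whereas $w_{0,S\setminus\{\alpha_{1}\}}(\alpha_{1})=\alpha_{0}-\alpha_{1}$ by Lemma \ref{lemma 3.1}. Writing $v_{4}(\alpha_{3})=\alpha_{1}+\beta'$ with $\beta'$ supported on $\{\alpha_{2},\dots,\alpha_{7}\}$, this gives $w_{0,S\setminus\{\alpha_{1}\}}v_{4}(\alpha_{3})=(\alpha_{0}-\alpha_{1})-\beta'$, which one simplifies using the expansion of $\alpha_{0}$. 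For $(2)(iii)$, set $\gamma=w_{0,S\setminus\{\alpha_{1}\}}v_{4}(\alpha_{3})$, a positive root whose support is $\{\alpha_{1},\dots,\alpha_{6}\}$; the case $i=1$ is $\gamma$ itself. For $i\in\{2,\dots,6\}$, note that the longest element of the standard parabolic subgroup $W_{S\setminus\{\alpha_{1},\alpha_{i}\}}$ leaves unchanged the coefficient of every simple root lying outside $S\setminus\{\alpha_{1},\alpha_{i}\}$, so $w_{0,S\setminus\{\alpha_{1},\alpha_{i}\}}(\gamma)$ keeps the positive coefficients of $\alpha_{1}$ and of $\alpha_{i}$; being a root with at least two simple roots in its support, it is a non-simple positive root.

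The only genuine obstacle is bookkeeping: there is no conceptual content beyond part $(1)$, but verifying that the $17$-letter word carries $\alpha_{0}$ to $-\alpha_{4}$ and computing $v_{4}(\alpha_{3})$ and the $v_{4}^{-1}(\alpha_{j})$ each amount to pushing a root through seventeen reflections, so the chief risk is arithmetic slippage. The consistency relations above — the word length matching the height of $\alpha_{0}$, the $\alpha_{0}$-identity among the $v_{4}^{-1}(\alpha_{j})$, and the norm test $(\beta,\beta)=(\alpha_{1},\alpha_{1})$ for rootness — are the safeguards I would use throughout.
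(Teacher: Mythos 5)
Your proposal is correct and follows essentially the same route as the paper: part (1) by exhibiting the word, checking $(v'_4)^{-1}(\alpha_0)=-\alpha_4$, and invoking the length/uniqueness statement of Corollary \ref{corollary 2.1}; parts (2)(i) and (3) by direct computation; (2)(ii) via Lemma \ref{lemma 3.1} together with $w_{0,S\setminus\{\alpha_1\}}(\alpha_i)=-\alpha_i$ for $i\neq 1$; and (2)(iii) via the fact that $w_{0,S\setminus\{\alpha_1,\alpha_i\}}$ preserves the coefficients of $\alpha_1$ and $\alpha_i$. One remark worth recording: carrying out your simplification in (2)(ii) gives $\alpha_0-\alpha_1-(\alpha_2+\alpha_3+2\alpha_4+2\alpha_5+\alpha_6+\alpha_7)=\alpha_1+\alpha_2+2\alpha_3+2\alpha_4+\alpha_5+\alpha_6$, not the expression $\alpha_1+\alpha_2+2\alpha_3+\alpha_4+\alpha_5+\alpha_6$ printed in the lemma (which fails your own norm test and is not a root), so the stated $\alpha_4$-coefficient is a typo; this does not affect (2)(iii), since the support is still $\{\alpha_1,\dots,\alpha_6\}$.
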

	\begin{proof}Proof of (1): Let ${v'}_{4}=s_{1}s_{3}s_{4}s_{5}s_{2}s_{4}s_{3}s_{6}s_{5}s_{4}s_{1}s_{2}s_{3}s_{7}s_{6}s_{5}s_{4}.$
		Note that ${{v'}_{4}}^{-1}(\alpha_{0})=-\alpha_{4}.$ Since $\ell({v'}_{4})=\ell(v_{4}),$ by Corollary \ref{corollary 2.1}, we have $v_{4}=v'_{4}.$
		
		Proof of (2): (i) Follows from the usual calculation.

		(ii) Follows from (2)(i), and using Lemma \ref{lemma 3.1}.
		Since support of $w_{0, S\setminus\{\alpha_{1}\}}v_{4}(\alpha_{3})$ contains $\alpha_{i}$ $i\neq 7,$ proof of (2)(iii) follows.

		Proof of (3): Follows from the usual calculation by using the description of $v_{4}$ as in (1). 
		
	\end{proof}
	
	\begin{lem}\label{lemma 5.9} We have the following 
		\begin{enumerate}
			\item[(1)]
			$(i)$ $w_{0, S\setminus \{\alpha_{1}, \alpha_{2}\}}(\alpha_{1})=\alpha_{1}+\alpha_{3}+\alpha_{4}+\alpha_{5}+\alpha_{6}+\alpha_{7}.$\\
			$(ii)$	$w_{0,S\setminus \{\alpha_{1}\}}w_{0, S\setminus \{\alpha_{1}, \alpha_{2}\}}(\alpha_{1})=\omega_{1}-(\alpha_{1}+\alpha_{3}+\alpha_{4}+\alpha_{5}+\alpha_{6}+\alpha_{7}).$\\
			$(iii)$ ${v_{4}}^{-1}w_{0,S\setminus \{\alpha_{1}\}}w_{0, S\setminus \{\alpha_{1}, \alpha_{2}\}}(\alpha_{1})=\alpha_{5}+\alpha_{6}+\alpha_{7}.$
			
			\vspace{.3cm}
			\item[(2)]  
			$(i)$ $w_{0, S\setminus \{\alpha_{1}, \alpha_{3}\}}(\alpha_{1})=\alpha_{1}.$\\
			$(ii)$	$w_{0,S\setminus \{\alpha_{1}\}}w_{0, S\setminus \{\alpha_{1}, \alpha_{3}\}}(\alpha_{1})=\omega_{1}-\alpha_{1}.$\\
			$(iii)$ ${v_{4}}^{-1}w_{0,S\setminus \{\alpha_{1}\}}w_{0, S\setminus \{\alpha_{1}, \alpha_{3}\}}(\alpha_{1})=\alpha_{1}+2\alpha_{2}+2\alpha_{3}+3\alpha_{4}+3\alpha_{5}+2\alpha_{6}+\alpha_{7}.$

			\item[(3)]  
			$(i)$ $w_{0, S\setminus \{\alpha_{1}, \alpha_{4}\}}(\alpha_{1})=\alpha_{1}+\alpha_{3}.$\\
			$(ii)$	$w_{0,S\setminus \{\alpha_{1}\}}w_{0, S\setminus \{\alpha_{1}, \alpha_{4}\}}(\alpha_{1})=\omega_{1}-(\alpha_{1}+\alpha_{3}).$\\
			$(iii)$ ${v_{4}}^{-1}w_{0,S\setminus \{\alpha_{1}\}}w_{0, S\setminus \{\alpha_{1}, \alpha_{4}\}}(\alpha_{1})=\alpha_{1}+\alpha_{2}+2\alpha_{3}+2\alpha_{4}+2\alpha_{5}+2\alpha_{6}+\alpha_{7}.$

			\item[(4)]  
			$(i)$ $w_{0, S\setminus \{\alpha_{1}, \alpha_{5}\}}(\alpha_{1})=\alpha_{1}+\alpha_{2}+\alpha_{3}+\alpha_{4}.$\\
			$(ii)$	$w_{0,S\setminus \{\alpha_{1}\}}w_{0, S\setminus \{\alpha_{1}, \alpha_{5}\}}(\alpha_{1})=\omega_{1}-(\alpha_{1}+\alpha_{2}+\alpha_{3}+\alpha_{4}).$\\
			$(iii)$ ${v_{4}}^{-1}w_{0,S\setminus \{\alpha_{1}\}}w_{0, S\setminus \{\alpha_{1}, \alpha_{5}\}}(\alpha_{1})=\alpha_{1}+\alpha_{2}+2\alpha_{3}+2\alpha_{4}+2\alpha_{5}+\alpha_{6}.$
			
			\item[(5)]  
			$(i)$ $w_{0, S\setminus \{\alpha_{1}, \alpha_{6}\}}(\alpha_{1})=\alpha_{1}+2\alpha_{3}+2\alpha_{4}+\alpha_{2}+\alpha_{5}.$\\
			$(ii)$	$w_{0,S\setminus \{\alpha_{1}\}}w_{0, S\setminus \{\alpha_{1}, \alpha_{6}\}}(\alpha_{1})=\omega_{1}-(\alpha_{1}+2\alpha_{3}+2\alpha_{4}+\alpha_{2}+\alpha_{5}).$\\
			$(iii)$ ${v_{4}}^{-1}w_{0,S\setminus \{\alpha_{1}\}}w_{0, S\setminus \{\alpha_{1}, \alpha_{6}\}}(\alpha_{1})=\alpha_{1}+\alpha_{3}.$
		\end{enumerate}
	\end{lem}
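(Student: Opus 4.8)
The plan is to prove all three parts of each item $1$ through $5$ (i.e.\ for $i\in\{2,3,4,5,6\}$) by the same three-step recipe, in complete analogy with the $E_6$ computation of Lemma \ref{lemma 5.2}.

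\textbf{Part (i).} Fix $i\neq 1,7$ and put $I=S\setminus\{\alpha_i\}$, so that $S\setminus\{\alpha_1,\alpha_i\}=I\setminus\{\alpha_1\}$ and hence $w_{0,S\setminus\{\alpha_1,\alpha_i\}}(\alpha_1)=w_{0,I\setminus\{\alpha_1\}}(\alpha_1)$. Since the reflections occurring in $w_{0,I\setminus\{\alpha_1\}}$ fix all roots supported away from the connected component of the Dynkin diagram of $I$ containing $\alpha_1$, we may replace $I$ by that component $I_0$. Reading off Figure 3: $I_0$ has type $A_6$ when $i=2$, type $A_1$ when $i=3$, type $A_2$ when $i=4$, type $A_4$ when $i=5$, and type $D_5$ when $i=6$; in each case $\alpha_1$ is an end node of $I_0$ corresponding to the minuscule weight $\omega_1$ of $I_0$ (every fundamental weight is minuscule in type $A$, and $\omega_1$ is minuscule in type $D$). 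By Lemma \ref{lem 2.3} applied inside the root subsystem generated by $I_0$, $w_{0,I_0\setminus\{\alpha_1\}}(\alpha_1)$ equals the highest root of $I_0$, which gives the expressions in (i).

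\textbf{Part (ii).} From the remark preceding Lemma \ref{lemma 5.8} we have $w_{0,S\setminus\{\alpha_1\}}(\alpha_j)=-\alpha_j$ for $j\neq 1$, and from Lemma \ref{lemma 3.1} (with $\alpha_0=\omega_1$) we have $w_{0,S\setminus\{\alpha_1\}}(\alpha_1)=\omega_1-\alpha_1$. If $\beta$ denotes the root found in (i), write $\beta=\alpha_1+\sum_{j\neq 1}m_j\alpha_j$ with $m_j\geq 0$; then by linearity $w_{0,S\setminus\{\alpha_1\}}(\beta)=(\omega_1-\alpha_1)-\sum_{j\neq 1}m_j\alpha_j=\omega_1-\beta$, which is exactly the assertion in (ii).

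\textbf{Part (iii).} Apply $v_4^{-1}$ to $\omega_1-\beta$. Since $\alpha_0=\omega_1$ and, by the defining property of $v_4$, $v_4^{-1}(\alpha_0)=-\alpha_4$, we get $v_4^{-1}(\omega_1-\beta)=-\alpha_4-v_4^{-1}(\beta)$; and $v_4^{-1}(\beta)$ is obtained by substituting the explicit values of $v_4^{-1}(\alpha_j)$ recorded in Lemma \ref{lemma 5.8}(3) and collecting coefficients. This substitution is purely linear and yields the stated expressions; for example when $i=2$ one gets $v_4^{-1}(\beta)=-(\alpha_4+\alpha_5+\alpha_6+\alpha_7)$, whence $v_4^{-1}(\omega_1-\beta)=\alpha_5+\alpha_6+\alpha_7$. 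The only step requiring care is the identification of $I_0$ in Part (i) --- notably that $I_0$ is of type $D_5$, not $A_5$, when $i=6$, since $\alpha_2$ stays attached to $\alpha_4$; the rest is bookkeeping with formulas already proved in Lemmas \ref{lemma 3.1} and \ref{lemma 5.8}.
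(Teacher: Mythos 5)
Your proposal is correct and follows essentially the same route as the paper: identify the connected component of the Dynkin subdiagram of $S\setminus\{\alpha_i\}$ containing $\alpha_1$, apply Lemma \ref{lem 2.3} there to get the highest root of that component for part (i), use $w_{0,S\setminus\{\alpha_1\}}(\alpha_j)=-\alpha_j$ together with Lemma \ref{lemma 3.1} for part (ii), and substitute the values from Lemma \ref{lemma 5.8}(3) for part (iii). The paper simply writes out the five cases separately rather than treating them uniformly, but the content is identical, including the key observation that the component is of type $D_5$ when $i=6$.
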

	
	\begin{proof}
		Proof of (1): (i) We consider the Dynkin subdiagram of $E_{7},$ corresponding to the subset $S\setminus \{\alpha_{2}\}$ of $S$ (see Figure 3):

		\vspace*{1cm}
		\begin{picture}(10,0)
			\thicklines
			\put(.2,0){\circle*{0.2}}
			\put(0,-0.5){$\alpha_{1}$}
			\put(0.1,0){\line(1,0){1}}
			\put(1.2,0){\circle*{0.2}}
			\put(1.2,0){\line(1,0){1}}
			\put(1,-0.5){$\alpha_{3}$}
			\put(2.2,0){\circle*{0.2}}
			\put(2,-0.5){$\alpha_{4}$}
			\put(2.2,0){\line(1,0){1}}
			\put(3.2,0){\circle*{0.2}}
			\put(3.1,-0.5){$\alpha_{5}$}
			\put(4.2,0){\circle*{0.2}}
			\put(4,-0.5){$\alpha_{6}$}
			\put(3.2,0){\line(1,0){1}}
			\put(5.2,0){\circle*{0.2}}
			\put(4.2,0){\line(1,0){1}}
			\put(5.1,-0.5){$\alpha_{7}$}
		\end{picture}
		\vspace{1cm}
		
		Let $I=S\setminus\{\alpha_{2}\}.$
		Now we observe that $w_{0, S\setminus \{\alpha_{1}, \alpha_{2}\}}(\alpha_{1})=w_{0,I\setminus\{\alpha_{1}\}}(\alpha_{1}).$
		We note that the connected component of the Dynkin subdiagram associated to $I,$ containing $\alpha_{1}$ is of type $A_{6.}$ Since $\alpha_{1}$ is minuscule  in type $A_{6}$, by Lemma \ref{lem 2.3}, we have $w_{0,I\setminus\{\alpha_{1}\}}(\alpha_{1})=\alpha_{1}+\alpha_{3}+\alpha_{4}+\alpha_{5}+\alpha_{6}+\alpha_{7}.$
		
		(ii). Note that the Dynkin subdiagram of $E_{7},$ corresponding to $S\setminus \{\alpha_{1}\}$ is of type $D_{6}.$ Since $w_{0, S\setminus \{\alpha_{1}\}}$ is the longest element of $W_{ S\setminus\{\alpha_{1}\}},$ we have $w_{0, S\setminus \{\alpha_{1}\}}(\alpha_{i})=-\alpha_{i}$  for all $i\neq 1.$ Therefore, from the above discussion together with Lemma \ref{lemma 3.1} proof of (1)(ii) follows.
		
		(iii). By (1)(ii) we have $w_{0,S\setminus\{\alpha_{1}\}}w_{0,S\setminus\{\alpha_{1},\alpha_{2}\}}(\alpha_{1})=\omega_{1}-(\alpha_{1}+\alpha_{3}+\alpha_{4}+\alpha_{5}+\alpha_{6}+\alpha_{7}).$ Therefore, by using Lemma \ref{lemma 5.8}(3), we get ${v_{4}}^{-1}w_{0,S\setminus\{\alpha_{1}\}}w_{0, S\setminus\{\alpha_{1},\alpha_{2}\}}(\alpha_{1})=-\alpha_{4}+(\alpha_{1}+2\alpha_{2}+2\alpha_{3}+4\alpha_{4}+3\alpha_{5}+2\alpha_{6}+\alpha_{7})-(\alpha_{2}+\alpha_{4}+\alpha_{5})-\alpha_{6}-(\alpha_{3}+\alpha_{4}+\alpha_{5})-\alpha_{1}-(\alpha_{2}+\alpha_{3}+\alpha_{4})=\alpha_{5}+\alpha_{6}+\alpha_{7}.$
		
		Proof of (2): (i) Next we consider the Dynkin subdiagram of $E_{7},$ corresponding to the  subset $S\setminus \{\alpha_{3}\}$ of $S:$ 
		
		\vspace*{2cm}
		\begin{picture}(10,0)
			\thicklines
			\put(.2,0){\circle*{0.2}}
			\put(0,-0.5){$\alpha_{1}$}
			\put(2.2,0){\circle*{0.2}}
			\put(2,-0.5){$\alpha_{4}$}
			\put(2.2,0){\line(1,0){1}}
			\put(3.2,0){\circle*{0.2}}
			\put(3.1,-0.5){$\alpha_{5}$}
			\put(4.2,0){\circle*{0.2}}
			\put(4,-0.5){$\alpha_{6}$}
			\put(3.2,0){\line(1,0){1}}
			\put(2.2,1){\circle*{0.2}}
			\put(2.1,1.3){$\alpha_{2}$}
			\put(2.2,0){\line(0,1){1}}
			\put(5.2,0){\circle*{0.2}}
			\put(4.2,0){\line(1,0){1}}
			\put(5.1,-0.5){$\alpha_{7}$}
		\end{picture}
		\vspace{1cm}
		
		Let $I=S\setminus \{\alpha_{3}\}.$ Then we observe that $w_{0,S\setminus \{\alpha_{1}, \alpha_{3}\}}(\alpha_{1})=w_{0,I\setminus\{\alpha_{1}\}}(\alpha_{1}).$ Further, we note that the connected component of the Dynkin subdiagram associated to $I,$ containing $\alpha_{1}$ is of type $A_{1}.$ Since $\alpha_{1}$ is minuscule  in type $A_{1},$ by Lemma \ref{lem 2.3}, we have $w_{0,I\setminus\{\alpha_{1}\}}(\alpha_{1})=\alpha_{1}$ 
		
		(ii). Similar to the proof of (1)(ii).  
		
		(iii).  By (2)(ii) we have $w_{0,S\setminus\{\alpha_{1}\}}w_{0,S\setminus\{\alpha_{1},\alpha_{2}\}}(\alpha_{1})=\omega_{1}-\alpha_{1}.$ Therefore, by using Lemma \ref{lemma 5.8}(3), we get ${v_{4}}^{-1}w_{0,S\setminus\{\alpha_{1}\}}w_{0, S\setminus\{\alpha_{1},\alpha_{2}\}}(\alpha_{1})=-\alpha_{4}+(\alpha_{1}+2\alpha_{2}+2\alpha_{3}+4\alpha_{4}+3\alpha_{5}+2\alpha_{6}+\alpha_{7})=\alpha_{1}+2\alpha_{2}+2\alpha_{3}+3\alpha_{4}+3\alpha_{5}+2\alpha_{6}+\alpha_{7}.$

		Proof of (3): (i) 
		Next we consider the Dynkin subdiagram of $E_{7},$ corresponding to the subset $S\setminus \{\alpha_{4}\}$ of $S:$ 
		
		\vspace{2cm}
		\begin{picture}(10,0)
			\thicklines
			\put(.2,0){\circle*{0.2}}
			\put(0,-0.5){$\alpha_{1}$}
			\put(0.2,0){\line(1,0){1}}
			\put(1.2,0){\circle*{0.2}}
			\put(1,-0.5){$\alpha_{3}$}
			\put(3.2,0){\circle*{0.2}}
			\put(3.1,-0.5){$\alpha_{5}$}
			\put(4.2,0){\circle*{0.2}}
			\put(4,-0.5){$\alpha_{6}$}
			\put(3.2,0){\line(1,0){1}}
			\put(2.2,1){\circle*{0.2}}
			\put(2.1,1.3){$\alpha_{2}$}
			\put(5.2,0){\circle*{0.2}}
			\put(4.2,0){\line(1,0){1}}
			\put(5.1,-0.5){$\alpha_{7}$}
		\end{picture}
		\vspace{1cm}
		
		Let $I=S\setminus \{\alpha_{4}\}.$ Then we observe that $w_{0,S\setminus \{\alpha_{1}, \alpha_{4}\}}(\alpha_{1})=w_{0,I\setminus\{\alpha_{1}\}}(\alpha_{1}).$ Further, we note that the connected component of the Dynkin subdiagram associated to $I,$  containing $\alpha_{1}$ is of type $A_{2}.$ Since $\alpha_{1}$ is minuscule  in type $A_{1}$, by Lemma \ref{lem 2.3}, we have $w_{0,I\setminus\{\alpha_{1}\}}(\alpha_{1})=\alpha_{1}+\alpha_{3}$ 
		
		(ii). Similar to the proof of (1)(ii).
		
		(iii). By (3)(ii) we have $w_{0, S\setminus\{\alpha_{1}\}}w_{0, S\setminus\{\alpha_{1}, \alpha_{3}\}}(\alpha_{1})=\omega_{1}-(\alpha_{1}+\alpha_{3}).$ Therefore, by using  Lemma \ref{lemma 5.8}(3), we get $v_{4}^{-1}w_{0,S\setminus \{\alpha_{1}\}}w_{0, S\setminus \{\alpha_{1}, \alpha_{3}\}}(\alpha_{1})=-\alpha_{4}+(\alpha_{1} +2\alpha_{2}+2\alpha_{3}+4\alpha_{4}+3\alpha_{5}+2\alpha_{6}+\alpha_{7})-(\alpha_{2}+\alpha_{4}+\alpha_{5})=\alpha_{1}+\alpha_{2}+2\alpha_{3}+2\alpha_{4}+2\alpha_{5}+2\alpha_{6}+\alpha_{7}.$

		Proof of (4)(i): 
		Next we consider the Dynkin subdiagram of $E_{7},$ corresponding to the subset $S\setminus\{\alpha_{5}\}$ of $S:$ 
		
		\vspace*{2cm}
		\begin{picture}(10,0)
			\thicklines
			\put(.2,0){\circle*{0.2}}
			\put(0,-0.5){$\alpha_{1}$}
			\put(0.2,0){\line(1,0){1}}
			\put(1.2,0){\circle*{0.2}}
			\put(1.2,0){\line(1,0){1}}
			\put(1,-0.5){$\alpha_{3}$}
			\put(2.2,0){\circle*{0.2}}
			\put(2,-0.5){$\alpha_{4}$}
			\put(4.2,0){\circle*{0.2}}
			\put(4,-0.5){$\alpha_{6}$}
			\put(2.2,1){\circle*{0.2}}
			\put(2.1,1.3){$\alpha_{2}$}
			\put(2.2,0){\line(0,1){1}}
			\put(5.2,0){\circle*{0.2}}
			\put(4.2,0){\line(1,0){1}}
			\put(5.1,-0.5){$\alpha_{7}$}
		\end{picture}
		\vspace{1cm}
		
		Let $I=S\setminus \{\alpha_{5}\}.$ Then we observe that $w_{0,S\setminus \{\alpha_{1}, \alpha_{5}\}}(\alpha_{1})=w_{0,I\setminus\{\alpha_{1}\}}(\alpha_{1}).$ Further, we note that the connected component of the Dynkin subdiagram associated to $I,$ containing $\alpha_{1}$ is of type $A_{4}.$ Since $\alpha_{1}$ is minuscule  in type $A_{4},$ by Lemma \ref{lem 2.3}, we have $w_{0,I\setminus\{\alpha_{1}\}}(\alpha_{1})=\alpha_{1}+\alpha_{3}+\alpha_{4}+\alpha_{2}.$ 
		
		(ii). Similar to the proof of (1)(i).
		
		(iii). By (4)(ii) we have $w_{0, S\setminus\{\alpha_{1}\}}w_{0, S\setminus\{\alpha_{1}, \alpha_{3}\}}(\alpha_{1})=\omega_{1}-(\alpha_{1}+\alpha_{3}+\alpha_{4}+\alpha_{2}).$ Therefore, by using  Lemma \ref{lemma 5.8}(3), we get $v_{4}^{-1}w_{0,S\setminus \{\alpha_{1}\}}w_{0, S\setminus \{\alpha_{1}, \alpha_{3}\}}(\alpha_{1})=-\alpha_{4}+(\alpha_{1} +2\alpha_{2}+2\alpha_{3}+4\alpha_{4}+3\alpha_{5}+2\alpha_{6}+\alpha_{7})-(\alpha_{2}+\alpha_{4}+\alpha_{5})-\alpha_{6}-\alpha_{7}=\alpha_{1}+\alpha_{2}+2\alpha_{3}+2\alpha_{4}+2\alpha_{5}+\alpha_{6}.$

		Proof of (5)(i): 
		Next we consider the Dynkin subdiagram of $E_{7},$ corresponding to the subset $S\setminus\{\alpha_{6}\}$ of $S:$ 
		
		\vspace*{2cm}
		\begin{picture}(10,0)
			\thicklines
			\put(.2,0){\circle*{0.2}}
			\put(0,-0.5){$\alpha_{1}$}
			\put(0.2,0){\line(1,0){1}}
			\put(1.2,0){\circle*{0.2}}
			\put(1.2,0){\line(1,0){1}}
			\put(1,-0.5){$\alpha_{3}$}
			\put(2.2,0){\circle*{0.2}}
			\put(2,-0.5){$\alpha_{4}$}
			\put(2.2,0){\line(1,0){1}}
			\put(3.2,0){\circle*{0.2}}
			\put(3.1,-0.5){$\alpha_{5}$}
			\put(2.2,1){\circle*{0.2}}
			\put(2.1,1.3){$\alpha_{2}$}
			\put(2.2,0){\line(0,1){1}}
			\put(5.2,0){\circle*{0.2}}
			\put(5.1,-0.5){$\alpha_{7}$}
		\end{picture}
		\vspace{1cm}
		
		Let $I=S\setminus \{\alpha_{6}\}.$ Then we observe that $w_{0,S\setminus \{\alpha_{1}, \alpha_{4}\}}(\alpha_{1})=w_{0,I\setminus\{\alpha_{1}\}}(\alpha_{1}).$ Further, we note that the connected component of the Dynkin subdiagram associated to $I,$ containing $\alpha_{1}$ is of type $D_{5}.$ Since $\alpha_{1}$ is minuscule  in type $D_{5}$, by Lemma \ref{lem 2.3}, we have $w_{0,I\setminus\{\alpha_{1}\}}(\alpha_{1})=\alpha_{1}+2(\alpha_{3}+\alpha_{4})+\alpha_{2}+\alpha_{5}.$ 
		
		(ii).  Similar to the proof of (1)(ii).
		
		(iii). By (5)(ii) we have $w_{0, S\setminus\{\alpha_{1}\}}w_{0, S\setminus\{\alpha_{1}, \alpha_{3}\}}(\alpha_{1})=\omega_{1}-(\alpha_{1}+2\alpha_{3}+2\alpha_{4}+\alpha_{2}+\alpha_{5}).$ Therefore, by using  Lemma \ref{lemma 5.8}(3), we get $v_{4}^{-1}w_{0,S\setminus \{\alpha_{1}\}}w_{0, S\setminus \{\alpha_{1}, \alpha_{3}\}}(\alpha_{1})=-\alpha_{4}+(\alpha_{1} +2\alpha_{2}+2\alpha_{3}+4\alpha_{4}+3\alpha_{5}+2\alpha_{6}+\alpha_{7})-2(\alpha_{2}+\alpha_{4}+\alpha_{5})-2\alpha_{6}-\alpha_{7}-(\alpha_{3}+\alpha_{4}+\alpha_{5})=\alpha_{1}+\alpha_{3}.$
	\end{proof}
	\begin{lem}\label{lemma 5.10}Let $w_{i}=w_{0,S\setminus\{\alpha_{1}, \alpha_{i}\}}w_{0, S\setminus\{\alpha_{1}\}}v_{4}$ for $i\neq 7.$  Then we have $w_{i}^{-1}(\alpha_{j})$ is a positive root for $j\neq i,$ and $w_{i}^{-1}(\alpha_{i})$ is a negative root.
	\end{lem}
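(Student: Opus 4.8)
The plan is to mirror the proof of Lemma~\ref{lemma 5.5} (the $E_{6}$ case), with $\alpha_{1}$ playing the role that $\alpha_{2}$ played there, and exploiting that $w_{0,S\setminus\{\alpha_{1}\}}$ acts as $-\mathrm{id}$ on the root subsystem spanned by $S\setminus\{\alpha_{1}\}$ (recorded just before Lemma~\ref{lemma 5.8}: $w_{0,S\setminus\{\alpha_{1}\}}(\alpha_{j})=-\alpha_{j}$ for $j\neq 1$). Throughout, write $w_{i}^{-1}=v_{4}^{-1}\,w_{0,S\setminus\{\alpha_{1}\}}\,w_{0,S\setminus\{\alpha_{1},\alpha_{i}\}}$, using that the two longest elements are involutions.

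First I would dispose of $i=1$: here $w_{1}=w_{0,S\setminus\{\alpha_{1}\}}w_{0,S\setminus\{\alpha_{1}\}}v_{4}=v_{4}$, so the claim is exactly the list of values $v_{4}^{-1}(\alpha_{j})$ in Lemma~\ref{lemma 5.8}(3): $v_{4}^{-1}(\alpha_{1})$ is negative, and $v_{4}^{-1}(\alpha_{j})$ is a positive root for $j=2,\dots,7$.

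Now fix $i\in\{2,3,4,5,6\}$ and treat three kinds of simple roots. For $\alpha_{i}$ itself, put $\beta=w_{0,S\setminus\{\alpha_{1},\alpha_{i}\}}(\alpha_{i})$, a positive root supported inside $S\setminus\{\alpha_{1},\alpha_{i}\}$; then $w_{0,S\setminus\{\alpha_{1}\}}(\beta)=-\beta$, so $w_{i}^{-1}(\alpha_{i})=-v_{4}^{-1}(\beta)$. Writing $\beta=\sum_{j\ge 2}c_{j}\alpha_{j}$ with all $c_{j}\ge 0$, we get $v_{4}^{-1}(\beta)=\sum_{j\ge 2}c_{j}\,v_{4}^{-1}(\alpha_{j})$, a nonnegative combination of the positive roots $v_{4}^{-1}(\alpha_{2}),\dots,v_{4}^{-1}(\alpha_{7})$ from Lemma~\ref{lemma 5.8}(3); since it is itself a root it must be positive, whence $w_{i}^{-1}(\alpha_{i})<0$. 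For $\alpha_{j}$ with $j\notin\{1,i\}$, the longest element of the parabolic gives $w_{0,S\setminus\{\alpha_{1},\alpha_{i}\}}(\alpha_{j})=-\alpha_{k}$ for some simple root $\alpha_{k}\in S\setminus\{\alpha_{1},\alpha_{i}\}$, so $k\neq 1$; then $w_{0,S\setminus\{\alpha_{1}\}}(-\alpha_{k})=\alpha_{k}$, and $v_{4}^{-1}(\alpha_{k})$ is a positive root by Lemma~\ref{lemma 5.8}(3), hence $w_{i}^{-1}(\alpha_{j})>0$. Finally, for $\alpha_{1}$ the quantity $w_{i}^{-1}(\alpha_{1})=v_{4}^{-1}w_{0,S\setminus\{\alpha_{1}\}}w_{0,S\setminus\{\alpha_{1},\alpha_{i}\}}(\alpha_{1})$ is exactly what is computed and shown to be a positive root in parts (1)(iii)--(5)(iii) of Lemma~\ref{lemma 5.9}, for $i=2,\dots,6$ respectively. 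Combining the three cases proves the lemma.

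The step that really costs something is the $\alpha_{1}$ case, i.e.\ Lemma~\ref{lemma 5.9}: there is no purely formal shortcut, and one must run the explicit longest-element computation in each of the five subdiagrams $S\setminus\{\alpha_{i}\}$ ($i=2,\dots,6$) — identifying the component containing $\alpha_{1}$ as $A_{6}$, $A_{1}$, $A_{2}$, $A_{4}$, $D_{5}$ respectively and invoking Lemma~\ref{lem 2.3} and Lemma~\ref{lemma 3.1} — and then substitute the explicit simple-root values of $v_{4}^{-1}$ from Lemma~\ref{lemma 5.8}(3). Everything else is the sign bookkeeping above.
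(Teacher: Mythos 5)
Your proof is correct and follows essentially the same route as the paper: you dispose of $i=1$ via Lemma~\ref{lemma 5.8}(3), use that $w_{0,S\setminus\{\alpha_{1}\}}$ acts as $-\mathrm{id}$ on the $D_{6}$ subsystem to reduce the signs of $w_{i}^{-1}(\alpha_{i})$ and $w_{i}^{-1}(\alpha_{j})$ ($j\neq 1,i$) to the positivity statements of Lemma~\ref{lemma 5.8}(3), and quote Lemma~\ref{lemma 5.9} for $w_{i}^{-1}(\alpha_{1})$, exactly as the paper does. The only slip is the phrase ``supported inside $S\setminus\{\alpha_{1},\alpha_{i}\}$'': the support of $\beta=w_{0,S\setminus\{\alpha_{1},\alpha_{i}\}}(\alpha_{i})$ does contain $\alpha_{i}$ (with coefficient $1$); what your computation actually uses --- that $\alpha_{1}$ is absent from the support, so $\beta=\sum_{j\ge 2}c_{j}\alpha_{j}$ --- is correct, so the argument is unaffected.
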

	\begin{proof}
		Note that for $i=1,$ we have $w_{1}=v_{4}.$ Then by Lemma \ref{lemma 5.8}(3), we are done. For $i\neq 1,$ let  $w_{0,S\setminus \{\alpha_{1}, \alpha_{i}\}}(\alpha_{i})=\beta.$ Then $\beta$ is a positive root whose supports does not contain $\alpha_{1}.$ Since $w_{0, S\setminus \{\alpha_{1}\}}$ is the longest element of $W_{S\setminus \{\alpha_{1}\}}$ (Weyl group of  type $D_{6}$), we have  $w_{0,S\setminus \{\alpha_{1}\}}(\alpha_{i})=-\alpha_{i}$ for all $i\neq 1.$ Thus we have $w_{0,S\setminus \{\alpha_{1}\}}(\beta)=-\beta.$ Further, since the support of $\beta$ does not contain $\alpha_{1},$ by Lemma \ref{lemma 5.8}(3) we have $v_{4}^{-1}(-\beta)$ is a negative root. Hence we have $w_{i}^{-1}(\alpha_{i})$ is a negative root for $i\neq 1$. 
		
		On the other hand, for $i\neq1,7$ and $j\neq 1,i,$  $w_{0,S\setminus \{\alpha_{1}, \alpha_{i}\}}(\alpha_{j})=-\alpha_{k},$ where $\alpha_{k}$ is a simple root different from $\alpha_{1}.$ Therefore, $w_{0,S\setminus \{\alpha_{1}\}}(-\alpha_{k})=\alpha_{k}.$ Further, since $\alpha_{k}$ is different from $\alpha_{1},$ by Lemma \ref{lemma 5.8}(3), $v_{4}^{-1}(\alpha_{k})$ is a positive root. Hence $w_{i}^{-1}(\alpha_{j})$ is a positive root when $i\neq 1,7$ and $j\neq 1,i.$ Moreover, by Lemma \ref{lemma 5.9}, we have  $w_{i}^{-1}(\alpha_{1})$ is a positive root for $i\neq 1,7.$ Hence, we have $w_{i}^{-1}(\alpha_{j})$ is a positive root for $i\neq 7$ and $j\neq i.$
	\end{proof}
	
	\begin{proof}[{\bf Proof of Proposition \ref{Proposition 5.7}:}]
		We note that $\omega_{i}$ is not minuscule if and only if $i\neq 7.$ Assume that $\omega_{i}$ is  not minuscule. Recall that $w_{i}=w_{0,S\setminus \{\alpha_{1}, \alpha_{i}\}} w_{0, S\setminus \{\alpha_{1}\}} v_{4}.$ By Lemma \ref{lemma 5.8}(2) and (3), we conclude  that $w_{i}(\alpha_{3})$ is a non simple positive root.
		On the other hand, by Lemma \ref{lemma 5.10},  $w_{i}^{-1}(\alpha_{i})$ is a negative root and $w_{i}^{-1}(\alpha_{j})$ is a positive root for $j\neq i.$ Thus $P_{i}$ is the stabilizer of $X_{P_{3}}(w_{i})$ in $G.$ Since $\alpha_{0}=\omega_{1},$ and $v_{4}^{-1}(\alpha_{0})$ is a negative root,  $w_{i}^{-1}(\alpha_{0})=v_{4}^{-1}(\alpha_{0})$ (as$~w_{0,S\setminus\{\alpha_{1}\}}w_{0,S\setminus\{\alpha_{1}, \alpha_{i}\}}(\alpha_{0})=\alpha_{0}$) is a negative root. Therefore, by using Theorem \ref{thm1} the natural homomorphism  $P_{i}\longrightarrow Aut^0(X_{P_{3}}(w_{i}))$ is an isomorphism of  algebraic groups. 
	\end{proof}

	\section{$G$ is of type $E_{8}$}
	By Lemma \ref{lem 3.2}, we note that all simple roots are non-minuscule. In this section, our goal is to prove the following proposition
	\begin{prop}\label{proposition 5.12} 
		Assume that $\omega_{i}$ is non minuscule.
		Then  there exists $X_{P_{7}}(w_{i})$ in $G/P_{7}$ such that 
		$P_{i}=Aut^0(X_{P_{7}}(w_{i})).$
		
	\end{prop}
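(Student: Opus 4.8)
The plan is to run, for $E_8$, the same construction used for $E_6$ and $E_7$ in Propositions \ref{proposition 5.4} and \ref{Proposition 5.7}. Here $\alpha_0=\omega_8$, and by Lemma \ref{lem 3.2} \emph{every} fundamental weight $\omega_i$ is non-minuscule, so for each $1\le i\le 8$ one must exhibit an element $w_i\in W^{\{\alpha_7\}}$ with $P_i=Aut^0(X_{P_7}(w_i))$. Mirroring the earlier sections, I would take
\[
w_i\;=\;w_{0,\,S\setminus\{\alpha_8,\alpha_i\}}\;w_{0,\,S\setminus\{\alpha_8\}}\;v,
\]
where $v$ is the unique element of minimal length in $W$ with $v^{-1}(\alpha_0)=-\alpha_k$ for a suitable simple root $\alpha_k$ (playing the role of $\alpha_4$ for $E_7$ and of $\alpha_2$ for $E_6$); by Corollary \ref{corollary 2.1} such a $v$ has $\ell(v)=g-1=\mathrm{ht}(\alpha_0)=29$, so it is produced by writing down an explicit word of length $29$ and checking that it carries $-\alpha_0$ to $\alpha_k$. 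Two structural facts carry most of the burden: the Dynkin subdiagram on $S\setminus\{\alpha_8\}$ is of type $E_7$, whose longest element equals $-\mathrm{id}$, so $w_{0,S\setminus\{\alpha_8\}}$ acts as $\beta\mapsto-\beta$ on every root supported in $S\setminus\{\alpha_8\}$ and, by Lemma \ref{lemma 3.1}, $w_{0,S\setminus\{\alpha_8\}}(\alpha_8)=\alpha_0-\alpha_8$; and $\alpha_0=\omega_8$ is orthogonal to $\alpha_1,\dots,\alpha_7$, hence fixed by $W_{S\setminus\{\alpha_8\}}$ and a fortiori by $W_{S\setminus\{\alpha_8,\alpha_i\}}$.

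Granting this, I would establish three computational lemmas, parallel to Lemmas \ref{lemma 5.8}--\ref{lemma 5.10}. First: a reduced expression for $v$, the complete list of images $v^{-1}(\alpha_1),\dots,v^{-1}(\alpha_8)$ (one finds $v^{-1}(\alpha_8)$ is a negative root and $v^{-1}(\alpha_j)$ is a positive root for $j\neq 8$), and the fact that $w_{0,S\setminus\{\alpha_8\}}\,v(\alpha_7)$ is a non-simple positive root whose support meets every $\alpha_i$, $i\neq 8$; minimality of the candidate word is automatic from Corollary \ref{corollary 2.1} once $(v')^{-1}(\alpha_0)=-\alpha_k$ is verified. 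Second: for each $i\neq 8$, compute $\beta_i:=w_{0,S\setminus\{\alpha_8,\alpha_i\}}(\alpha_8)=\alpha_8+\gamma_i$ with $\gamma_i$ supported on $S\setminus\{\alpha_8,\alpha_i\}$, read off from the relevant type $A$ (or type $D$) connected component using the minuscule description in Lemma \ref{lem 2.3}; then $w_{0,S\setminus\{\alpha_8\}}(\beta_i)=\alpha_0-\beta_i$ and $v^{-1}(\alpha_0-\beta_i)=-\alpha_k-v^{-1}(\beta_i)$, and using the list from the first lemma one checks this is a positive root, i.e. $w_i^{-1}(\alpha_8)>0$. Third: combining these, for $j\notin\{8,i\}$ one has $w_{0,S\setminus\{\alpha_8,\alpha_i\}}(\alpha_j)=-\alpha_\ell$ for a simple root $\alpha_\ell\neq\alpha_8$, so $w_{0,S\setminus\{\alpha_8\}}(-\alpha_\ell)=\alpha_\ell$ and $w_i^{-1}(\alpha_j)=v^{-1}(\alpha_\ell)>0$; while $w_{0,S\setminus\{\alpha_8,\alpha_i\}}(\alpha_i)$ is a positive root supported away from $\alpha_8$, hence is negated by $w_{0,S\setminus\{\alpha_8\}}$, and $v^{-1}$ of a negative root supported on $S\setminus\{\alpha_8\}$ is negative (again from the first lemma), giving $w_i^{-1}(\alpha_i)<0$. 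The case $i=8$ is separate and easier, since then $w_8=v$ and one only needs $v^{-1}(\alpha_8)<0$, $v^{-1}(\alpha_j)>0$ for $j\neq 8$, and $v(\alpha_7)$ non-simple positive — all contained in the first lemma.

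With these in hand the proof of Proposition \ref{proposition 5.12} closes exactly as in the earlier sections: $w_i(\alpha_7)$ is a non-simple positive root, so $w_i\in W^{\{\alpha_7\}}$; $w_i^{-1}(\alpha_i)<0$ and $w_i^{-1}(\alpha_j)>0$ for $j\neq i$ show that the stabilizer of $X_{P_7}(w_i)$ in $G$ is precisely $P_i$; and since $W_{S\setminus\{\alpha_8\}}$ fixes $\alpha_0$ we get $w_i^{-1}(\alpha_0)=v^{-1}(\alpha_0)=-\alpha_k<0$, so Theorem \ref{thm1} yields that $P_i\to Aut^0(X_{P_7}(w_i))$ is an isomorphism of algebraic groups. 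The main obstacle is entirely computational: unlike $E_6$ and $E_7$ there is no single reduction to a much smaller classical diagram, so one must honestly track the images $w_i^{-1}(\alpha_j)$ through the threefold composite for all eight values of $i$ in rank-$8$ data with a highest root of height $29$. The one feature that keeps this tractable — and around which the whole construction is engineered — is that $w_{0,S\setminus\{\alpha_8\}}=-\mathrm{id}$ on the $E_7$ subsystem, which collapses the middle factor to a sign change and reduces everything to understanding how the single element $v^{-1}$ moves a short explicit list of roots.
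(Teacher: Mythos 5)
Your proposal is correct and essentially identical to the paper's proof: the paper takes the pivot root $\alpha_8$, chooses $v=v_6$ (the minimal-length element with $v_6^{-1}(\alpha_0)=-\alpha_6$, given by an explicit reduced word of length $29$), sets $w_i=w_{0,S\setminus\{\alpha_i,\alpha_8\}}\,w_{0,S\setminus\{\alpha_8\}}\,v_6$, and establishes exactly your three computational lemmas (Lemmas \ref{lemma 4.13}, \ref{lemma 5.14}, \ref{lemma 5.15}) before closing via Theorem \ref{thm1}. The only data you leave unspecified are the ``suitable'' $\alpha_k$, which the paper fixes as $\alpha_6$, and the explicit root-by-root verifications, which the paper carries out case by case.
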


	We recall that there exists a unique element $v_{6}$ in $W$ of minimal length such that $v_{6}^{-1}(\alpha_{0})=-\alpha_{6}$ (see Corollary \ref{corollary 2.1}). Further, we observe that $w_{0,S\setminus \{\alpha_{8}\}}$ is the longest element of the Dynkin subdiagram removing the node $\alpha_{8},$ which is of type $E_{7}.$ Thus  $w_{0, S\setminus \{\alpha_{8}\}}(\alpha_{i})=-\alpha_{i}$ for all $i\neq8.$ We use these observations very frequently.
	\begin{lem}\label{lemma 4.13}
		Let $v_{6}$ be as above. Then we have the following
		\begin{enumerate}
			\item [(1)] $v_{6}=s_{8}s_{7}s_{6}s_{5}s_{4}s_{2}s_{3}s_{1}s_{4}s_{3}s_{5}s_{6}s_{4}s_{2}s_{5}s_{7}s_{4}s_{6}s_{5}s_{3}s_{4}s_{2}s_{8}s_{7}s_{1}s_{3}s_{4}s_{5}s_{6}.$
		\end{enumerate}
		\begin{enumerate}
			\item[(2)]
			$(i).$ $v_{6}(\alpha_{7})=\alpha_{1}+2\alpha_{2}+3\alpha_{3}+4\alpha_{4}+3\alpha_{5}+2\alpha_{6}+\alpha_{7}+\alpha_{8}.$\\
			$(ii).$ 
			$w_{0, S\setminus \{\alpha_{8}\}}v_{6}(\alpha_{7})=\alpha_{1}+\alpha_{2}+\alpha_{3}+2\alpha_{4}+2\alpha_{5}+2\alpha_{6}+2\alpha_{7}+\alpha_{8}.$\\
			$(iii).$ 
			$w_{0, S\setminus \{\alpha_{8}, \alpha_{i}\}}w_{0, S\setminus \{\alpha_{8}\}}v_{6}(\alpha_{7})$ is a non simple positive root for $i=1,2,3,4,5,6,7,8.$
		\end{enumerate}
		\begin{enumerate}
			\item [(3)] $v_{6}^{-1}(\alpha_{1})=\alpha_{8},$ $v_{6}^{-1}(\alpha_{2})=\alpha_{2},$ $v_{6}^{-1}(\alpha_{3})=\alpha_{5}+\alpha_{6}+\alpha_{7},$ $v_{6}^{-1}(\alpha_{4})=\alpha_{4},$ $v_{6}^{-1}(\alpha_{5})=\alpha_{3},$ $v_{6}^{-1}(\alpha_{6})=\alpha_{1},$ $v_{6}^{-1}(\alpha_{7})=\alpha_{2}+\alpha_{3}+2\alpha_{4}+2\alpha_{5}+\alpha_{6},$ $v_{6}^{-1}(\alpha_{8})=-(2\alpha_{1}+3\alpha_{2}+4\alpha_{3}++6\alpha_{4}+5\alpha_{5}+4\alpha_{6}+2\alpha_{7}+\alpha_{8}).$
			
		\end{enumerate}

	\end{lem}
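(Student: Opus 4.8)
The plan is to follow the three-step template already used for $E_{6}$ (Lemma~\ref{lemma 5.3}) and $E_{7}$ (Lemma~\ref{lemma 5.8}), now centred at the node $\alpha_{8}$ (for which $\alpha_{0}=\omega_{8}$) and the auxiliary element $v_{6}$ with $v_{6}^{-1}(\alpha_{0})=-\alpha_{6}$. For part (1), I would set $v'_{6}$ equal to the displayed $29$-letter word, check that this expression is reduced, and verify $(v'_{6})^{-1}(\alpha_{0})=-\alpha_{6}$ by applying the $29$ simple reflections in succession. Since $\alpha_{0}=2\alpha_{1}+3\alpha_{2}+4\alpha_{3}+6\alpha_{4}+5\alpha_{5}+4\alpha_{6}+3\alpha_{7}+2\alpha_{8}$, we have $\mathrm{ht}(\alpha_{0})=29$, so Corollary~\ref{corollary 2.1} gives $\ell(v_{6})=29=\ell(v'_{6})$; as $v_{6}$ is by definition the unique element of minimal length with $v_{6}^{-1}(\alpha_{0})=-\alpha_{6}$ and $v'_{6}$ is such an element, $v_{6}=v'_{6}$.

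Parts (2)(i) and (3) are the explicit root-system computations: applying a reduced word of $v_{6}$ to $\alpha_{7}$, and applying the reverse word $v_{6}^{-1}$ to each simple root, updating coefficients at each step via the $E_{8}$ Cartan integers. I would carry these out directly; they are long but entirely mechanical. Part (2)(ii) is then immediate: from (i) we have $v_{6}(\alpha_{7})=\alpha_{1}+2\alpha_{2}+3\alpha_{3}+4\alpha_{4}+3\alpha_{5}+2\alpha_{6}+\alpha_{7}+\alpha_{8}$, and since the subdiagram on $S\setminus\{\alpha_{8}\}$ is of type $E_{7}$ we have $w_{0,S\setminus\{\alpha_{8}\}}(\alpha_{i})=-\alpha_{i}$ for $i\neq 8$, while $w_{0,S\setminus\{\alpha_{8}\}}(\alpha_{8})=\alpha_{0}-\alpha_{8}$ by Lemma~\ref{lemma 3.1}; hence
$$w_{0,S\setminus\{\alpha_{8}\}}v_{6}(\alpha_{7})=\alpha_{0}-\bigl(\alpha_{1}+2\alpha_{2}+3\alpha_{3}+4\alpha_{4}+3\alpha_{5}+2\alpha_{6}+\alpha_{7}+\alpha_{8}\bigr)=\alpha_{1}+\alpha_{2}+\alpha_{3}+2\alpha_{4}+2\alpha_{5}+2\alpha_{6}+2\alpha_{7}+\alpha_{8}$$
after substituting the expansion of $\alpha_{0}$; note that this root has full support with every coefficient $\geq 1$.

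For part (2)(iii), write $\beta=w_{0,S\setminus\{\alpha_{8}\}}v_{6}(\alpha_{7})$. If $i=8$, then $w_{0,S\setminus\{\alpha_{8},\alpha_{i}\}}=w_{0,S\setminus\{\alpha_{8}\}}$ is an involution, so $w_{0,S\setminus\{\alpha_{8},\alpha_{i}\}}(\beta)=v_{6}(\alpha_{7})$, a non-simple positive root by (i). If $i\neq 8$, then $w_{0,S\setminus\{\alpha_{8},\alpha_{i}\}}$ is a product of simple reflections $s_{j}$ with $j\notin\{8,i\}$, and such a product leaves the $\alpha_{8}$-coefficient and the $\alpha_{i}$-coefficient of any weight unchanged; applied to $\beta$ it therefore produces a root whose $\alpha_{8}$-coefficient is $1>0$ (so the root is positive) and whose $\alpha_{i}$-coefficient equals that of $\beta$, which is $\geq 1$, so the root has at least two nonzero coefficients and is non-simple.

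The only substantial labour is the direct verification in part (1) that the $29$-letter word is reduced and that its inverse carries $\alpha_{0}$ to $-\alpha_{6}$, together with the explicit evaluations of $v_{6}$ and $v_{6}^{-1}$ on the simple roots in (2)(i) and (3); these are purely computational. The structural steps (2)(ii) and (2)(iii) are then formal consequences of Lemma~\ref{lemma 3.1} and the fact that $w_{0,S\setminus\{\alpha_{8}\}}$ acts as $-1$ on the root lattice spanned by $S\setminus\{\alpha_{8}\}$.
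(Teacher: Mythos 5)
Your proposal matches the paper's proof essentially step for step: the paper also identifies $v_{6}$ with the explicit word via the length count from Corollary \ref{corollary 2.1}, treats (2)(i) and (3) as direct calculations, deduces (2)(ii) from Lemma \ref{lemma 3.1}, and gets (2)(iii) from the full support of $w_{0,S\setminus\{\alpha_{8}\}}v_{6}(\alpha_{7})$. Your spelled-out case analysis for (2)(iii) is just a more explicit version of the paper's one-line support argument, so there is nothing substantively different.
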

	\begin{proof}Proof of (1): Let ${v'}_{6}=s_{8}s_{7}s_{6}s_{5}s_{4}s_{2}s_{3}s_{1}s_{4}s_{3}s_{5}s_{6}s_{4}s_{2}s_{5}s_{7}s_{4}s_{6}s_{5}s_{3}s_{4}s_{2}s_{8}s_{7}s_{1}s_{3}s_{4}s_{5}s_{6}.$
		Note that ${{v'}_{6}}^{-1}(\alpha_{0})=-\alpha_{6}.$ Since $\ell({v'}_{6})=\ell(v_{6}),$ by Proposition \ref{Prop 2.1} we have $v_{6}=v'_{6}.$
		
		Proof of (2): (i) Follows from the usual calculation using description of $v_{6}$ as in (1).\\(ii) follows from (i), and using Lemma \ref{lemma 3.1}.\\
		Since support of $w_{0, S\setminus\{\alpha_{1}\}}v_{4}(\alpha_{3})$ contains $\alpha_{i}$ $i=1,2,3,4,5,6,7,8$ (iii) follows.
		
		Proof of (3): Follows from the usual calculation using the description of $v_{6}$ as in $(i).$ 
		
	\end{proof}
	\begin{lem}\label{lemma 5.14} We have the following 
		\begin{enumerate}
			\item[(1)]
			$(i)$ $w_{0, S\setminus \{\alpha_{1}, \alpha_{8}\}}(\alpha_{8})=\alpha_{2}+\alpha_{3}+2\alpha_{4}+2\alpha_{5}+2\alpha_{6}+2\alpha_{7}+\alpha_{8}.$\\
			$(ii)$	$w_{0,S\setminus \{\alpha_{8}\}}w_{0, S\setminus \{\alpha_{1}, \alpha_{8}\}}(\alpha_{8})=\omega_{8}-(\alpha_{2}+\alpha_{3}+2\alpha_{4}+2\alpha_{5}+2\alpha_{6}+2\alpha_{7}+\alpha_{8}).$\\
			$(iii)$ ${v_{6}}^{-1}w_{0,S\setminus \{\alpha_{8}\}}w_{0, S\setminus \{\alpha_{1}, \alpha_{8}\}}(\alpha_{8})=\alpha_{7}+\alpha_{8}.$

			\item[(2)]  
			$(i)$ $w_{0, S\setminus \{\alpha_{2}, \alpha_{8}\}}(\alpha_{8})=\alpha_{1}+\alpha_{3}+\alpha_{4}+\alpha_{5}+\alpha_{6}+\alpha_{7}+\alpha_{8}.$\\
			$(ii)$	$w_{0,S\setminus \{\alpha_{8}\}}w_{0, S\setminus \{\alpha_{2}, \alpha_{8}\}}(\alpha_{8})=\omega_{8}-(\alpha_{1}+\alpha_{3}+\alpha_{4}+\alpha_{5}+\alpha_{6}+\alpha_{7}+\alpha_{8}).$\\
			$(iii)$ ${v_{6}}^{-1}w_{0,S\setminus \{\alpha_{8}\}}w_{0, S\setminus \{\alpha_{2}, \alpha_{8}\}}(\alpha_{8})=\alpha_{1}+2\alpha_{2}+2\alpha_{3}+3\alpha_{4}+2\alpha_{5}+\alpha_{6}+\alpha_{7}.$

			\item[(3)]  
			$(i)$ $w_{0, S\setminus \{\alpha_{3}, \alpha_{8}\}}(\alpha_{8})=\alpha_{2}+\alpha_{4}+\alpha_{5}+\alpha_{6}+\alpha_{7}+\alpha_{8}.$\\
			$(ii)$	$w_{0,S\setminus \{\alpha_{8}\}}w_{0, S\setminus \{\alpha_{3}, \alpha_{8}\}}(\alpha_{8})=\omega_{8}-(\alpha_{2}+\alpha_{4}+\alpha_{5}+\alpha_{6}+\alpha_{7}+\alpha_{8}).$\\
			$(iii)$ ${v_{6}}^{-1}w_{0,S\setminus \{\alpha_{8}\}}w_{0, S\setminus \{\alpha_{3}, \alpha_{8}\}}(\alpha_{8})=\alpha_{1}+\alpha_{2}+2\alpha_{3}+3\alpha_{4}+3\alpha_{5}+2\alpha_{6}+2\alpha_{7}+\alpha_{8}.$

			\item[(4)]  
			$(i)$ $w_{0, S\setminus \{\alpha_{4}, \alpha_{8}\}}(\alpha_{8})=\alpha_{5}+\alpha_{6}+\alpha_{7}+\alpha_{8}.$\\
			$(ii)$	$w_{0,S\setminus \{\alpha_{8}\}}w_{0, S\setminus \{\alpha_{4}, \alpha_{8}\}}(\alpha_{8})=\omega_{8}-(\alpha_{5}+\alpha_{6}+\alpha_{7}+\alpha_{8}).$\\
			$(iii)$ ${v_{6}}^{-1}w_{0,S\setminus \{\alpha_{8}\}}w_{0, S\setminus \{\alpha_{4}, \alpha_{8}\}}(\alpha_{8})=\alpha_{1}+2\alpha_{2}+2\alpha_{3}+4\alpha_{4}+3\alpha_{5}+2\alpha_{6}+2\alpha_{7}+\alpha_{8}.$
			
			\item[(5)]  
			$(i)$ $w_{0, S\setminus \{\alpha_{5}, \alpha_{8}\}}(\alpha_{8})=\alpha_{6}+\alpha_{7}+\alpha_{8}.$\\
			$(ii)$	$w_{0,S\setminus \{\alpha_{8}\}}w_{0, S\setminus \{\alpha_{5}, \alpha_{8}\}}(\alpha_{8})=\omega_{8}-(\alpha_{6}+\alpha_{7}+\alpha_{8}).$\\
			$(iii)$ ${v_{6}}^{-1}w_{0,S\setminus \{\alpha_{8}\}}w_{0, S\setminus \{\alpha_{5}, \alpha_{8}\}}(\alpha_{8})=\alpha_{1}+2\alpha_{2}+3\alpha_{3}+4\alpha_{4}+3\alpha_{5}+2\alpha_{6}+2\alpha_{7}+\alpha_{8}.$

			\item[(6)]  
			$(i)$ $w_{0, S\setminus \{\alpha_{6}, \alpha_{8}\}}(\alpha_{8})=\alpha_{7}+\alpha_{8}.$\\
			$(ii)$	$w_{0,S\setminus \{\alpha_{8}\}}w_{0, S\setminus \{\alpha_{5}, \alpha_{8}\}}(\alpha_{8})=\omega_{8}-(\alpha_{7}+\alpha_{8}).$\\
			$(iii)$ ${v_{6}}^{-1}w_{0,S\setminus \{\alpha_{8}\}}w_{0, S\setminus \{\alpha_{5}, \alpha_{8}\}}(\alpha_{8})=2\alpha_{1}+2\alpha_{2}+3\alpha_{3}+4\alpha_{4}+3\alpha_{5}+2\alpha_{6}+2\alpha_{7}+\alpha_{8}.$

			\item[(7)]  
			$(i)$ $w_{0, S\setminus \{\alpha_{7}, \alpha_{8}\}}(\alpha_{8})=\alpha_{8}.$\\
			$(ii)$	$w_{0,S\setminus \{\alpha_{8}\}}w_{0, S\setminus \{\alpha_{5}, \alpha_{8}\}}(\alpha_{8})=\omega_{8}-\alpha_{8}.$\\
			$(iii)$ ${v_{6}}^{-1}w_{0,S\setminus \{\alpha_{8}\}}w_{0, S\setminus \{\alpha_{5}, \alpha_{8}\}}(\alpha_{8})=2\alpha_{1}+3\alpha_{2}+4\alpha_{3}+6\alpha_{4}+5\alpha_{5}+3\alpha_{6}+2\alpha_{7}+\alpha_{8}.$
		\end{enumerate}
	\end{lem}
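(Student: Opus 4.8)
The seven items share the same three-layer structure, so the plan is to prove part $(i)$ of each by a Dynkin-diagram inspection, then deduce $(ii)$ and $(iii)$ from $(i)$ by two linear substitutions; the only computational input needed beyond this is the list of images $v_6^{-1}(\alpha_j)$ already recorded in Lemma~\ref{lemma 4.13}(3).

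For part $(i)$, fix $i\in\{1,\dots,7\}$ and delete the node $\alpha_i$ from the $E_8$ diagram of Figure~4. I would locate the connected component $C$ of the resulting diagram containing $\alpha_8$: a direct check shows that $C$ is of type $D_7$ when $i=1$ and of type $A_7,A_6,A_4,A_3,A_2,A_1$ when $i=2,3,4,5,6,7$ respectively, and that in each case $\alpha_8$ sits at a node whose associated fundamental weight of $C$ is minuscule (an extremal node of an $A_k$, or the short-arm end $\omega_1$ of $D_7$; see Table~1). Since $\alpha_8$ is adjacent only to $\alpha_7$ in the $E_8$ diagram, the longest element $w_{0,S\setminus\{\alpha_i,\alpha_8\}}$ moves $\alpha_8$ only through its factor $w_{0,C\setminus\{\alpha_8\}}$, and Lemma~\ref{lem 2.3}, applied inside the root subsystem spanned by $C$, gives $w_{0,C\setminus\{\alpha_8\}}(\alpha_8)=\alpha_0^{C}$, the highest root of $C$. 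Reading $\alpha_0^C$ off the standard $D_n$ and $A_n$ formulas produces the displayed root; in particular its coefficient of $\alpha_8$ equals $1$ in every case.

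For parts $(ii)$ and $(iii)$, write the root $\beta$ from $(i)$ as $\beta=\gamma+\alpha_8$ with $\gamma$ supported on $S\setminus\{\alpha_8\}$. The subdiagram on $S\setminus\{\alpha_8\}$ is of type $E_7$, so $w_{0,S\setminus\{\alpha_8\}}(\gamma)=-\gamma$, while Lemma~\ref{lemma 3.1} (with $r=8$, using $\alpha_0=\omega_8$ in $E_8$) gives $w_{0,S\setminus\{\alpha_8\}}(\alpha_8)=\omega_8-\alpha_8$; by linearity $w_{0,S\setminus\{\alpha_8\}}(\beta)=\omega_8-\beta$, which is exactly the assertion of $(ii)$. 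For $(iii)$ apply $v_6^{-1}$ and use linearity once more: by the defining property of $v_6$ one has $v_6^{-1}(\omega_8)=v_6^{-1}(\alpha_0)=-\alpha_6$, hence $v_6^{-1}(\omega_8-\beta)=-\alpha_6-v_6^{-1}(\beta)$, and $v_6^{-1}(\beta)$ is computed by substituting the seven explicit images of Lemma~\ref{lemma 4.13}(3) and collecting coefficients. Item by item, a near-total cancellation occurs and one is left with the short positive root claimed in $(iii)$ --- for instance in item $(1)$ the substitution collapses to $v_6^{-1}(\beta)=-\alpha_6-\alpha_7-\alpha_8$, so that $v_6^{-1}(\omega_8-\beta)=\alpha_7+\alpha_8$.

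I expect parts $(i)$ and $(ii)$ to be essentially mechanical once the component types are fixed. The main obstacle, and the step where sign or coefficient slips are most likely, is the bookkeeping in $(iii)$: $v_6^{-1}$ is a word of length $29$, the input roots carry coefficients up to $2$, and an almost complete cancellation must take place to land on a root of height $2$ or $3$. I would carry out that step as a single coefficient table covering all seven items, exactly parallel to the $E_7$ computation in Lemma~\ref{lemma 5.9}, and treat the verification of those cancellations as the crux of the proof.
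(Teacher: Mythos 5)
Your proposal follows the paper's own argument essentially step for step: part $(i)$ via the connected component of $S\setminus\{\alpha_i\}$ containing $\alpha_8$ and Lemma~\ref{lem 2.3} applied to the minuscule node of that $D_7$ or $A_k$ component, part $(ii)$ via $w_{0,S\setminus\{\alpha_8\}}$ acting as $-1$ on the $E_7$ subsystem together with Lemma~\ref{lemma 3.1}, and part $(iii)$ by substituting the images from Lemma~\ref{lemma 4.13}(3). The component types and the sample cancellation you check in item $(1)$ are correct, so this is the same proof.
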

	\begin{proof}
		
		Proof of (1): (i) We consider the Dynkin subdiagram of $E_{8},$ corresponding to the subset $S\setminus \{\alpha_{1}\}$ of $S$ (see Figure: 4):

		\vspace*{2cm}
		\begin{picture}(10,0)
			\thicklines
			\put(1.2,0){\circle*{0.2}}
			\put(1.2,0){\line(1,0){1}}
			\put(1,-0.5){$\alpha_{3}$}
			\put(2.2,0){\circle*{0.2}}
			\put(2,-0.5){$\alpha_{4}$}
			\put(2.2,0){\line(1,0){1}}
			\put(3.2,0){\circle*{0.2}}
			\put(3.1,-0.5){$\alpha_{5}$}
			\put(4.2,0){\circle*{0.2}}
			\put(4,-0.5){$\alpha_{6}$}
			\put(3.2,0){\line(1,0){1}}
			\put(2.2,1){\circle*{0.2}}
			\put(2.1,1.3){$\alpha_{2}$}
			\put(2.2,0){\line(0,1){1}}
			\put(5.2,0){\circle*{0.2}}
			\put(4.2,0){\line(1,0){1}}
			\put(5.1,-0.5){$\alpha_{7}$}
			\put(5.2,0){\line(1,0){1}}
			\put(6.2,0){\circle*{0.2}}
			\put(6.1,-0.5){$\alpha_{8}$}
		\end{picture}
		\vspace{1cm}
		
		Let $I=S\setminus\{\alpha_{1}\}.$
		Now we observe that $w_{0, S\setminus \{\alpha_{1}, \alpha_{8}\}}(\alpha_{8})=w_{0,I\setminus\{\alpha_{8}\}}(\alpha_{8}).$ We note that the connected Dynkin subdiagram of $E_{8}$ associated to $I$ containing $\alpha_{8}$ is of type $D_{7}.$  Since $\alpha_{8}$ is minuscule  in type $D_{7}$, by Lemma \ref{lem 2.3}, we have $w_{0,I\setminus\{\alpha_{8}\}}(\alpha_{8})=\alpha_{2}+\alpha_{3}+2\alpha_{4}+2\alpha_{5}+2\alpha_{6}+2\alpha_{7}+\alpha_{8}.$
		
		(ii). Follows from the observation that $w_{0, S\setminus \{\alpha_{8}\}}(\alpha_{i})=-\alpha_{i}$ for all $i\neq 8,$ and by using Lemma \ref{lemma 3.1}.
		
		(iii). By (ii) we have $w_{0,S\setminus\{\alpha_{8}\}}w_{0,S\setminus\{\alpha_{1},\alpha_{8}\}}(\alpha_{8})=\omega_{8}-(\alpha_{2}+\alpha_{3}+2\alpha_{4}+2\alpha_{5}+2\alpha_{6}+2\alpha_{7}+\alpha_{8}).$ Therefore, by using Lemma \ref{lemma 4.13}(3), we get ${v_{6}}^{-1}w_{S\setminus\{\alpha_{8}\}}w_{0, S\setminus\{\alpha_{1},\alpha_{8}\}}=\alpha_{7}+\alpha_{8}.$

		Proof of (2): (i) Next we consider the Dynkin subdiagram of $E_{8},$ corresponding to the subset $S\setminus \{\alpha_{2}\}$ of $S$:

		\vspace*{2cm}
		\begin{picture}(10,0)
			\thicklines
			\put(.2,0){\circle*{0.2}}
			\put(0,-0.5){$\alpha_{1}$}
			\put(0.2,0){\line(1,0){1}}
			\put(1.2,0){\circle*{0.2}}
			\put(1.2,0){\line(1,0){1}}
			\put(1,-0.5){$\alpha_{3}$}
			\put(2.2,0){\circle*{0.2}}
			\put(2,-0.5){$\alpha_{4}$}
			\put(2.2,0){\line(1,0){1}}
			\put(3.2,0){\circle*{0.2}}
			\put(3.1,-0.5){$\alpha_{5}$}
			\put(4.2,0){\circle*{0.2}}
			\put(4,-0.5){$\alpha_{6}$}
			\put(3.2,0){\line(1,0){1}}
			\put(5.2,0){\circle*{0.2}}
			\put(4.2,0){\line(1,0){1}}
			\put(5.1,-0.5){$\alpha_{7}$}
			\put(5.2,0){\line(1,0){1}}
			\put(6.2,0){\circle*{0.2}}
			\put(6.1,-0.5){$\alpha_{8}$}
		\end{picture}
		\vspace{1cm}
		
		Let $I=S\setminus\{\alpha_{2}\}.$
		Now we observe that $w_{0, S\setminus \{\alpha_{2}, \alpha_{8}\}}(\alpha_{8})=w_{0,I\setminus\{\alpha_{8}\}}(\alpha_{8}).$We note that the connected component of the Dynkin subdiagram of $E_{8}$ associated to $I$  containing $\alpha_{8}$ is of type $A_{7}.$  Since $\alpha_{8}$ is minuscule  in type $A_{7},$ by Lemma \ref{lem 2.3}, we have $w_{0,I\setminus\{\alpha_{8}\}}(\alpha_{8})=\alpha_{1}+\alpha_{3}+\alpha_{4}+\alpha_{5}+\alpha_{6}+\alpha_{7}+\alpha_{8}.$
		
		(ii). Follows from the observation that $w_{0, S\setminus \{\alpha_{8}\}}(\alpha_{i})=-\alpha_{i}$ for all $i\neq 8,$ and by using Lemma \ref{lemma 3.1}.
		
		(iii). By (ii) we have $w_{0,S\setminus\{\alpha_{8}\}}w_{0,S\setminus\{\alpha_{2},\alpha_{8}\}}(\alpha_{8})=\omega_{8}-(\alpha_{1}+\alpha_{3}+\alpha_{4}+\alpha_{5}+\alpha_{6}+\alpha_{7}+\alpha_{8}).$ Therefore, by using Lemma \ref{lemma 4.13}(3), we get ${v_{6}}^{-1}w_{S\setminus\{\alpha_{8}\}}w_{0, S\setminus\{\alpha_{2},\alpha_{8}\}}=\alpha_{1}+2\alpha_{2}+2\alpha_{3}+3\alpha_{4}+2\alpha_{5}+\alpha_{6}+\alpha_{7}.$
		
		Proof of (3)(i):
		Next we consider the Dynkin subdiagram of $E_{8},$ corresponding to the subset $S\setminus \{\alpha_{3}\}$ of $S:$ 
		
		\vspace*{2cm}
		\begin{picture}(10,0)
			\thicklines
			\put(.2,0){\circle*{0.2}}
			\put(0,-0.5){$\alpha_{1}$}
			\put(2.2,0){\circle*{0.2}}
			\put(2,-0.5){$\alpha_{4}$}
			\put(2.2,0){\line(1,0){1}}
			\put(3.2,0){\circle*{0.2}}
			\put(3.1,-0.5){$\alpha_{5}$}
			\put(4.2,0){\circle*{0.2}}
			\put(4,-0.5){$\alpha_{6}$}
			\put(3.2,0){\line(1,0){1}}
			\put(2.2,1){\circle*{0.2}}
			\put(2.1,1.3){$\alpha_{2}$}
			\put(2.2,0){\line(0,1){1}}
			\put(5.2,0){\circle*{0.2}}
			\put(4.2,0){\line(1,0){1}}
			\put(5.1,-0.5){$\alpha_{7}$}
			\put(5.2,0){\line(1,0){1}}
			\put(6.2,0){\circle*{0.2}}
			\put(6.1,-0.5){$\alpha_{8}$}
		\end{picture}
		\vspace{1cm}
		
		Let $I=S\setminus \{\alpha_{3}\}.$ Then we observe that $w_{0,S\setminus \{\alpha_{3}, \alpha_{8}\}}(\alpha_{8})=w_{0,I\setminus\{\alpha_{8}\}}(\alpha_{8}).$Further, we note that the connected component of the Dynkin subdiagram associated to $I$ containing $\alpha_{8}$ is of type $A_{6}.$ Since $\alpha_{8}$ is minuscule  in type $A_{6},$ by Lemma \ref{lem 2.3}, we have $w_{0,I\setminus\{\alpha_{8}\}}(\alpha_{8})=\alpha_{2}+\alpha_{4}+\alpha_{5}+\alpha_{6}+\alpha_{7}+\alpha_{8}.$ 
		
		(ii). Follows from the observation that $w_{0, S\setminus \{\alpha_{8}\}}(\alpha_{i})=-\alpha_{i}$  for all $i\neq 8,$ and by using Lemma \ref{lemma 3.1}.
		
		(iii). By (ii) we have $w_{0,S\setminus\{\alpha_{8}\}}w_{0,S\setminus\{\alpha_{3},\alpha_{8}\}}(\alpha_{8})=\omega_{8}-(\alpha_{2}+\alpha_{4}+\alpha_{5}+\alpha_{6}+\alpha_{7}+\alpha_{8}).$ Therefore, by using Lemma \ref{lemma 4.13}(3), we get ${v_{6}}^{-1}w_{S\setminus\{\alpha_{8}\}}w_{0, S\setminus\{\alpha_{3},\alpha_{8}\}}=\alpha_{1}+\alpha_{2}+2\alpha_{3}+3\alpha_{4}+3\alpha_{5}+2\alpha_{6}+2\alpha_{7}+\alpha_{8}.$

		Proof of (4)(i): 
		Next we consider the Dynkin subdiagram of $E_{8},$ corresponding to the  subset $S\setminus\{\alpha_{4}\}$ of $S:$ 
		
		\vspace*{2cm}
		\begin{picture}(10,0)
			\thicklines
			\put(.2,0){\circle*{0.2}}
			\put(0,-0.5){$\alpha_{1}$}
			\put(0.2,0){\line(1,0){1}}
			\put(1.2,0){\circle*{0.2}}
			\put(1,-0.5){$\alpha_{3}$}
			\put(3.2,0){\circle*{0.2}}
			\put(3.1,-0.5){$\alpha_{5}$}
			\put(4.2,0){\circle*{0.2}}
			\put(4,-0.5){$\alpha_{6}$}
			\put(3.2,0){\line(1,0){1}}
			\put(2.2,1){\circle*{0.2}}
			\put(2.1,1.3){$\alpha_{2}$}
			\put(5.2,0){\circle*{0.2}}
			\put(4.2,0){\line(1,0){1}}
			\put(5.1,-0.5){$\alpha_{7}$}
			\put(5.2,0){\line(1,0){1}}
			\put(6.2,0){\circle*{0.2}}
			\put(6.1,-0.5){$\alpha_{8}$}
		\end{picture}
		\vspace{1cm}
		
		Let $I=S\setminus \{\alpha_{4}\}.$ Then we observe that $w_{0,S\setminus \{\alpha_{4}, \alpha_{8}\}}(\alpha_{8})=w_{0,I\setminus\{\alpha_{8}\}}(\alpha_{8}).$ Further, we note that the connected component of the Dynkin subdiagram associated to $I$ containing $\alpha_{8}$ is of type $A_{4}.$ Since $\alpha_{8}$ is minuscule  in type $A_{4}$, by Lemma \ref{lem 2.3}, we have $w_{0,I\setminus\{\alpha_{8}\}}(\alpha_{8})=\alpha_{5}+\alpha_{6}+\alpha_{7}+\alpha_{8}.$

		(ii). Follows from the observation that $w_{0, S\setminus \{\alpha_{8}\}}(\alpha_{i})=-\alpha_{i}$ for all $i\neq 8,$ and by using Lemma \ref{lemma 3.1}.
		
		(iii).  By 4(ii) we have $w_{0,S\setminus\{\alpha_{8}\}}w_{0,S\setminus\{\alpha_{4},\alpha_{8}\}}(\alpha_{8})=\omega_{8}-(\alpha_{5}+\alpha_{6}+\alpha_{7}+\alpha_{8}).$ Therefore, by using Lemma \ref{lemma 4.13}(3), we get ${v_{6}}^{-1}w_{S\setminus\{\alpha_{8}\}}w_{0, S\setminus\{\alpha_{4},\alpha_{8}\}}=\alpha_{1}+2\alpha_{2}+2\alpha_{3}+4\alpha_{4}+3\alpha_{5}+2\alpha_{6}+2\alpha_{7}+\alpha_{8}.$

		Proof of (5)(i): 
		Next we consider the Dynkin subdiagram of $E_{8},$ corresponding to the subset $S\setminus \{\alpha_{5}\}$ of $S:$

		\vspace*{2cm}
		\begin{picture}(10,0)
			\thicklines
			\put(.2,0){\circle*{0.2}}
			\put(0,-0.5){$\alpha_{1}$}
			\put(0.2,0){\line(1,0){1}}
			\put(1.2,0){\circle*{0.2}}
			\put(1.2,0){\line(1,0){1}}
			\put(1,-0.5){$\alpha_{3}$}
			\put(2.2,0){\circle*{0.2}}
			\put(2,-0.5){$\alpha_{4}$}
			\put(4.2,0){\circle*{0.2}}
			\put(4,-0.5){$\alpha_{6}$}
			\put(2.2,1){\circle*{0.2}}
			\put(2.1,1.3){$\alpha_{2}$}
			\put(2.2,0){\line(0,1){1}}
			\put(5.2,0){\circle*{0.2}}
			\put(4.2,0){\line(1,0){1}}
			\put(5.1,-0.5){$\alpha_{7}$}
			\put(5.2,0){\line(1,0){1}}
			\put(6.2,0){\circle*{0.2}}
			\put(6.1,-0.5){$\alpha_{8}$}
		\end{picture}
		\vspace{1cm}
		
		Let $I=S\setminus \{\alpha_{5}\}.$ Then we observe that $w_{0,S\setminus \{\alpha_{5}, \alpha_{8}\}}(\alpha_{8})=w_{0,I\setminus\{\alpha_{8}\}}(\alpha_{8}).$Further, we note that the connected component of the Dynkin subdiagram associated to $I$ containing $\alpha_{8}$ is of type $A_{3}.$ Since $\alpha_{8}$ is minuscule  in type $A_{3},$ by Lemma \ref{lem 2.3}, we have $w_{0,I\setminus\{\alpha_{8}\}}(\alpha_{8})=\alpha_{6}+\alpha_{7}+\alpha_{8}.$ 
		
		(ii). Follows from the observation that $w_{0, S\setminus \{\alpha_{8}\}}(\alpha_{i})=-\alpha_{i}$  for all $i\neq 8,$ and by using Lemma \ref{lemma 3.1}.
		
		(iii).  By 5(ii) we have $w_{0,S\setminus\{\alpha_{8}\}}w_{0,S\setminus\{\alpha_{5},\alpha_{8}\}}(\alpha_{8})=\omega_{8}-(\alpha_{6}+\alpha_{7}+\alpha_{8}).$ Therefore, by using Lemma \ref{lemma 4.13}(3), we get ${v_{6}}^{-1}w_{S\setminus\{\alpha_{8}\}}w_{0, S\setminus\{\alpha_{5},\alpha_{8}\}}=\alpha_{1}+2\alpha_{2}+3\alpha_{3}+4\alpha_{4}+3\alpha_{5}+2\alpha_{6}+2\alpha_{7}+\alpha_{8}.$

		Proof of (6)(i): 
		Next we consider the Dynkin subdiagram of $E_{8},$ corresponding to  the subset $S\setminus \{\alpha_{6}\}$ of $S:$ 
		
		\vspace{2cm}
		\begin{picture}(10,0)
			\thicklines
			\put(.2,0){\circle*{0.2}}
			\put(0,-0.5){$\alpha_{1}$}
			\put(0.2,0){\line(1,0){1}}
			\put(1.2,0){\circle*{0.2}}
			\put(1.2,0){\line(1,0){1}}
			\put(1,-0.5){$\alpha_{3}$}
			\put(2.2,0){\circle*{0.2}}
			\put(2,-0.5){$\alpha_{4}$}
			\put(2.2,0){\line(1,0){1}}
			\put(3.2,0){\circle*{0.2}}
			\put(3.1,-0.5){$\alpha_{5}$}
			\put(2.2,1){\circle*{0.2}}
			\put(2.1,1.3){$\alpha_{2}$}
			\put(2.2,0){\line(0,1){1}}
			\put(5.2,0){\circle*{0.2}}
			\put(5.1,-0.5){$\alpha_{7}$}
			\put(5.2,0){\line(1,0){1}}
			\put(6.2,0){\circle*{0.2}}
			\put(6.1,-0.5){$\alpha_{8}$}
		\end{picture}
		\vspace{1cm}
		
		Let $I=S\setminus \{\alpha_{6}\}.$ Then we observe that $w_{0,S\setminus \{\alpha_{6}, \alpha_{8}\}}(\alpha_{8})=w_{0,I\setminus\{\alpha_{8}\}}(\alpha_{8}).$ Further, we note that the connected component of the Dynkin subdiagram associated to $I$ containing $\alpha_{8}$ is of type $A_{2}.$ Since $\alpha_{8}$ is minuscule  in type $A_{2},$ by Lemma \ref{lem 2.3}, we have $w_{0,I\setminus\{\alpha_{8}\}}(\alpha_{8})=\alpha_{7}+\alpha_{8}.$ 
		
		(ii). Follows from the observation that $w_{0, S\setminus \{\alpha_{8}\}}(\alpha_{i})=-\alpha_{i}$ for all $i\neq 8,$ and by using Lemma \ref{lemma 3.1}.
		
		(iii).   By 6(ii) we have $w_{0,S\setminus\{\alpha_{8}\}}w_{0,S\setminus\{\alpha_{6},\alpha_{8}\}}(\alpha_{8})=\omega_{8}-(\alpha_{7}+\alpha_{8}).$ Therefore, by using Lemma \ref{lemma 4.13}(3), we get ${v_{6}}^{-1}w_{S\setminus\{\alpha_{8}\}}w_{0, S\setminus\{\alpha_{6},\alpha_{8}\}}=2\alpha_{1}+2\alpha_{2}+3\alpha_{3}+4\alpha_{4}+3\alpha_{5}+2\alpha_{6}+2\alpha_{7}+\alpha_{8}.$

		Proof of (7)(i): Next we consider the Dynkin subdiagram of $E_{8},$ corresponding to the subset $S \setminus \{\alpha_{7}\}$ of $S:$

		\vspace*{2cm}
		\begin{picture}(10,0)
			\thicklines
			\put(.2,0){\circle*{0.2}}
			\put(0,-0.5){$\alpha_{1}$}
			\put(0.2,0){\line(1,0){1}}
			\put(1.2,0){\circle*{0.2}}
			\put(1.2,0){\line(1,0){1}}
			\put(1,-0.5){$\alpha_{3}$}
			\put(2.2,0){\circle*{0.2}}
			\put(2,-0.5){$\alpha_{4}$}
			\put(2.2,0){\line(1,0){1}}
			\put(3.2,0){\circle*{0.2}}
			\put(3.1,-0.5){$\alpha_{5}$}
			\put(4.2,0){\circle*{0.2}}
			\put(4,-0.5){$\alpha_{6}$}
			\put(3.2,0){\line(1,0){1}}
			\put(2.2,1){\circle*{0.2}}
			\put(2.1,1.3){$\alpha_{2}$}
			\put(2.2,0){\line(0,1){1}}
			\put(6.2,0){\circle*{0.2}}
			\put(6.1,-0.5){$\alpha_{8}$}
		\end{picture}
		\vspace{1cm}
		
		Let $I=S\setminus \{\alpha_{7}\}.$ Then we observe that $w_{0,S\setminus \{\alpha_{7}, \alpha_{8}\}}(\alpha_{8})=w_{0,I\setminus\{\alpha_{8}\}}(\alpha_{8}).$ Further, we note that the connected component of the Dynkin subdiagram associated to $I$ containing $\alpha_{8}$ is of type $A_{1}.$ Since $\alpha_{8}$ is minuscule  in type $A_{1},$ by Lemma \ref{lem 2.3}, we have $w_{0,I\setminus\{\alpha_{8}\}}(\alpha_{8})=\alpha_{8}.$ 
		
		(ii). Follows from the observation that $w_{0, S\setminus \{\alpha_{8}\}}(\alpha_{i})=-\alpha_{i}$ for all $i\neq 8,$ and by using Lemma \ref{lemma 3.1}.

		(iii). By 7(ii) we have $w_{0,S\setminus\{\alpha_{8}\}}w_{0,S\setminus\{\alpha_{7},\alpha_{8}\}}(\alpha_{8})=\omega_{8}-\alpha_{8}.$ Therefore, by using Lemma \ref{lemma 4.13}(3), we get ${v_{6}}^{-1}w_{S\setminus\{\alpha_{8}\}}w_{0, S\setminus\{\alpha_{7},\alpha_{8}\}}=2\alpha_{1}+3\alpha_{2}+4\alpha_{3}+6\alpha_{4}+5\alpha_{5}+3\alpha_{6}+2\alpha_{7}+\alpha_{8}.$

	\end{proof}
	\begin{lem}\label{lemma 5.15}Let $w_{i}=w_{0,S\setminus\{\alpha_{i}, \alpha_{8}\}}w_{0, S\setminus\{\alpha_{8}\}}v_{6}$ for $1\le i\le 8.$  Then we have $w_{i}^{-1}(\alpha_{j})$ is a positive root for $j\neq i,$ and $w_{i}^{-1}(\alpha_{i})$ is negative root.
	\end{lem}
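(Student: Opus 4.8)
The plan is to argue exactly as in the proofs of Lemma \ref{lemma 5.5} and Lemma \ref{lemma 5.10}, exploiting that $S\setminus\{\alpha_8\}$ is of type $E_7$ and that the longest element $w_{0,S\setminus\{\alpha_8\}}$ of $W_{S\setminus\{\alpha_8\}}$ acts as $-\mathrm{id}$ on the root subsystem it generates, so that $w_{0,S\setminus\{\alpha_8\}}(\alpha_i)=-\alpha_i$ for every $i\neq 8$ (this is recorded just before Lemma \ref{lemma 4.13}). Since $w_{0,S\setminus\{\alpha_i,\alpha_8\}}$ and $w_{0,S\setminus\{\alpha_8\}}$ are involutions, one has $w_i^{-1}=v_6^{-1}\,w_{0,S\setminus\{\alpha_8\}}\,w_{0,S\setminus\{\alpha_i,\alpha_8\}}$, and the whole computation amounts to tracking the three elementary steps $w_{0,S\setminus\{\alpha_i,\alpha_8\}}$, then $w_{0,S\setminus\{\alpha_8\}}$, then $v_6^{-1}$, applied to each simple root $\alpha_j$.

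First I would dispose of $i=8$: then $w_8 = w_{0,S\setminus\{\alpha_8\}}\,w_{0,S\setminus\{\alpha_8\}}\,v_6 = v_6$, and Lemma \ref{lemma 4.13}(3) exhibits $v_6^{-1}(\alpha_j)$ as a positive root for $j\neq 8$ and as a negative root for $j=8$, which is exactly the claim.

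Next, fix $i\neq 8$ and set $\beta = w_{0,S\setminus\{\alpha_i,\alpha_8\}}(\alpha_i)$. Since $\alpha_i\notin S\setminus\{\alpha_i,\alpha_8\}$, the longest element of $W_{S\setminus\{\alpha_i,\alpha_8\}}$ sends $\alpha_i$ to $\alpha_i$ plus a nonnegative combination of the simple roots in $S\setminus\{\alpha_i,\alpha_8\}$; in particular $\beta$ is a positive root whose support avoids $\alpha_8$, hence $\beta$ lies in the $E_7$-subsystem and $w_{0,S\setminus\{\alpha_8\}}(\beta)=-\beta$. Writing $\beta=\sum_{j=1}^{7}b_j\alpha_j$ with $b_j\ge 0$, Lemma \ref{lemma 4.13}(3) shows each $v_6^{-1}(\alpha_j)$ with $1\le j\le 7$ is a positive root, so $v_6^{-1}(\beta)$ is a nonzero nonnegative combination of positive roots, hence itself a positive root; therefore $w_i^{-1}(\alpha_i)=v_6^{-1}(-\beta)$ is a negative root. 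For $j\notin\{i,8\}$ the root $\alpha_j$ is simple in $W_{S\setminus\{\alpha_i,\alpha_8\}}$, so $w_{0,S\setminus\{\alpha_i,\alpha_8\}}(\alpha_j)=-\alpha_k$ for some simple $\alpha_k$ with $k\notin\{i,8\}$; then $w_{0,S\setminus\{\alpha_8\}}(-\alpha_k)=\alpha_k$ and, by Lemma \ref{lemma 4.13}(3) again, $v_6^{-1}(\alpha_k)$ is a positive root, so $w_i^{-1}(\alpha_j)$ is a positive root. Finally, the case $j=8$ (with $i\neq 8$) is precisely what Lemma \ref{lemma 5.14} computes: parts $(1)$ through $(7)$ give $w_i^{-1}(\alpha_8)=v_6^{-1}\,w_{0,S\setminus\{\alpha_8\}}\,w_{0,S\setminus\{\alpha_i,\alpha_8\}}(\alpha_8)$ explicitly as a positive root for each $i=1,\dots,7$. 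Assembling these cases proves the lemma.

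The reduction is routine; the only points needing care are that $w_{0,S\setminus\{\alpha_8\}}=-\mathrm{id}$ on the $E_7$-subsystem (so that replacing $\beta$ by $-\beta$ is legitimate) and the bookkeeping that $\beta$, respectively $\alpha_k$, remains supported away from $\alpha_8$, which is what licenses the use of Lemma \ref{lemma 4.13}(3). We never need to identify which simple root $\alpha_k$ occurs, nor the exact shape of $\beta$; only positivity and the support condition are used. Hence I do not expect a genuine obstacle here, the heavy lifting having already been carried out in Lemmas \ref{lemma 4.13} and \ref{lemma 5.14}.
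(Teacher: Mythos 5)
Your proposal is correct and follows essentially the same route as the paper's own proof: dispose of $i=8$ via $w_8=v_6$ and Lemma \ref{lemma 4.13}(3), handle $j=i$ by showing $\beta=w_{0,S\setminus\{\alpha_i,\alpha_8\}}(\alpha_i)$ is a positive root supported away from $\alpha_8$ so that $w_{0,S\setminus\{\alpha_8\}}(\beta)=-\beta$ and $v_6^{-1}(-\beta)<0$, handle $j\notin\{i,8\}$ by tracking $\alpha_j\mapsto-\alpha_k\mapsto\alpha_k\mapsto v_6^{-1}(\alpha_k)>0$, and invoke Lemma \ref{lemma 5.14} for $j=8$. Your write-up even corrects a citation slip in the paper (which refers to Lemma \ref{lemma 5.8}(3) where Lemma \ref{lemma 4.13}(3) is meant) and makes explicit why $v_6^{-1}(\beta)$ is positive.
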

	\begin{proof}
		Note that for $i=8$ we have $w_{8}=v_{6}.$ Then by Lemma \ref{lemma 4.13}, we are done. For $i\neq 8,$ let  $w_{0,S\setminus \{\alpha_{i}, \alpha_{8}\}}(\alpha_{i})=\beta.$ Then $\beta$ is positive root whose support does not contain $\alpha_{8}.$ Since $w_{0, S\setminus \{\alpha_{8}\}}$ is the longest element of $W_{S\setminus \{\alpha_{8}\}},$  $w_{0,S\setminus \{\alpha_{8}\}}(\alpha_{i})=-\alpha_{i}$ for $i\neq 8.$ Thus we have $w_{0,S\setminus \{\alpha_{8}\}}(\beta)=-\beta.$ Further, since the support of $\beta$ does not contain $\alpha_{8},$ by Lemma \ref{lemma 5.8}(3) we have $v_{6}^{-1}(-\beta)$ is a negative root. Hence we have $w_{i}^{-1}(\alpha_{i})$ is a negative root for $i\neq 8.$ 
		
		On the other hand, for $i\neq8,$ and $j\neq 8,i,$  $w_{0,S\setminus \{\alpha_{i}, \alpha_{8}\}}(\alpha_{j})=-\alpha_{k},$ where $\alpha_{k}$ is a simple root different from $\alpha_{8}.$ Therefore, $w_{0,S\setminus \{\alpha_{8}\}}(-\alpha_{k})=\alpha_{k}.$ Further, since $\alpha_{k}$ is different from $\alpha_{8},$ by Lemma \ref{lemma 4.13}(3), $v_{6}^{-1}(\alpha_{k})$ is a positive root. Hence $w_{i}^{-1}(\alpha_{j})$ is a positive root when $i\neq 8,$ and $j\neq i, 8.$ Moreover, by Lemma \ref{lemma 5.14}, we have  $w_{i}^{-1}(\alpha_{8})$ is a positive root for $i\neq 8.$ Hence, we have $w_{i}^{-1}(\alpha_{j})$ is a positive root for $i\neq 8,$ and $j\neq i.$
	\end{proof}
	
	\begin{proof}[{\bf Proof of Proposition \ref{proposition 5.12}:}]
		Recall that $w_{i}=w_{0,S\setminus \{\alpha_{i}, \alpha_{8}\}} w_{0, S\setminus \{\alpha_{8}\}} v_{6}.$ Note that $w_{i}(\alpha_{7})=w_{S\setminus \{\alpha_{i}, \alpha_{8}\}} w_{0, S\setminus \{\alpha_{8}\}} v_{6}(\alpha_{7}).$ By Lemma \ref{lemma 4.13}(2), we conclude  that $w_{i}(\alpha_{7})$ is a non simple positive root.
		On the other hand, by Lemma \ref{lemma 5.15},  $w_{i}^{-1}(\alpha_{i})$ is a negative root and $w_{i}^{-1}(\alpha_{j})$ is a positive root for $j\neq i.$ Thus $P_{i}$ is the stabilizer of $X_{P_{7}}(w_{i})$ in $G.$ Since $\alpha_{0}=\omega_{8},$ and  $v_{6}^{-1}(\alpha_{0})$ is a negative root, $w_{i}^{-1}(\alpha_{0})=v_{6}^{-1}(\alpha_{0})$ (as$~w_{0,S\setminus\{\alpha_{8}\}}w_{0,S\setminus\{\alpha_{i}, \alpha_{8}\}}(\alpha_{0})=\alpha_{0}$) is a negative root. Therefore, by using Theorem \ref{thm1} the natural homomorphism $P_{i}\longrightarrow Aut^0(X_{P_{7}}(w_{i}))$ is an isomorphism of algebraic groups. 
	\end{proof}

	\section{main theorem}
	In this section, our goal is to prove the following. A fundamental weight $\omega_{i}$ is minuscule if and only if for any parabolic subgroup $Q$ containing $B$ properly, there is no Schubert variety $X_{Q}(w)$ in $G/Q$ such that $P_{i}=Aut^{0}(X_{Q}(w)).$

	\begin{thm}\label{theorem 4.1}
		Assume that $\omega_{r}$ is  minuscule. If there exists a Schubert variety $X_{Q}(w)$ in  $G/Q$ (for some parabolic subgroup $Q$ of $G$ containing $B$) such that $P_{r}=Aut^0(X_{Q}(w)),$ then we have $Q=B.$
	\end{thm}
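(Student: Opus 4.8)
The plan is to argue by contradiction. Suppose $Q=P_I$ with $I\neq\emptyset$ (in particular $Q\neq G$, since $G/G$ is a point and $Aut^0(\mathrm{pt})=\{e\}\neq P_r$) and that $P_r=Aut^0(X_Q(w))$; we must derive $I=\emptyset$. Put $\mathfrak q=\operatorname{Lie}(Q)$, let $P_w=P_J$ be the stabilizer of $X_Q(w)$ in $G$, and let $\varphi_w\colon P_w\to Aut^0(X_Q(w))$ be the homomorphism of Theorem \ref{thm1}. Since $P_r$ stabilizes $X_Q(w)$ we have $P_r\subseteq P_w$, so $r\in J$, and the composite $P_r\hookrightarrow P_w\xrightarrow{\varphi_w}Aut^0(X_Q(w))$ is an isomorphism; by Theorem \ref{thm1}(1), $\varphi_w$ is moreover surjective.

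The first (and crucial) step is to prove $w^{-1}(\alpha_0)<0$. Let $\rho\colon\mathfrak g=H^0(G/Q,T_{G/Q})\to H^0(w,\mathfrak g/\mathfrak q)$ be the restriction map; by the commutative diagram in the proof of Theorem \ref{thm1}, $d\varphi_w$ is the restriction of $\rho$ to $\mathfrak p_w$ (followed by the inclusion $H^0(X_Q(w),T_{X_Q(w)})\hookrightarrow H^0(w,\mathfrak g/\mathfrak q)$), and in particular $d\varphi_w$ is $T$-equivariant. Because $\varphi_w|_{P_r}$ is an isomorphism, $\ker\varphi_w\cap P_r=\{e\}$, hence (the base field being $\mathbb C$) $d\varphi_w$ is injective on $\mathfrak p_r=\operatorname{Lie}(P_r)$; since $\mathfrak g_{-\alpha_0}\subseteq\mathfrak b\subseteq\mathfrak p_r$ and $d\varphi_w$ is $T$-equivariant, the module $H^0(w,\mathfrak g/\mathfrak q)$ has $-\alpha_0$ as a weight. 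But $\mathfrak g/\mathfrak q$ has a $B$-module filtration whose subquotients are $\mathbb C_\alpha$ with $\alpha\in R^+\setminus R^+(Q)$, so (using $H^1(w,\alpha)=0$ for $\alpha\in R^+$) $H^0(w,\mathfrak g/\mathfrak q)$ is filtered by $B$-modules $H^0(w,\alpha)$, each of whose weights $\mu$ satisfies $\mu\geq w(\alpha)\geq -\alpha_0$ (as in the proof of Theorem \ref{thm1}(2)). Therefore $w(\alpha)=-\alpha_0$ for some $\alpha\in R^+\setminus R^+(Q)$, so $-w^{-1}(\alpha_0)=\alpha\in R^+$, i.e. $w^{-1}(\alpha_0)<0$. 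By Theorem \ref{thm1}(2), $\varphi_w$ is an isomorphism; comparing dimensions ($\dim P_w=\dim P_r=\dim B+1$) with $P_r\subseteq P_w$ gives $J=\{\alpha_r\}$, i.e. $P_w=P_r$.

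Next I would identify $w$. From $P_w=P_r$ and $J=\{\alpha\in S:w^{-1}(\alpha)\in R^-\cup R^+(Q)\}$ we get $w^{-1}(\alpha)\in R^+$ for all simple $\alpha\neq\alpha_r$, i.e. $w^{-1}\in W^{S\setminus\{\alpha_r\}}$; together with $w^{-1}(\alpha_0)<0$, Lemma \ref{lem 2.4} yields $w^{-1}=w_0^{S\setminus\{\alpha_r\}}=w_0\,w_{0,S\setminus\{\alpha_r\}}$. Now $w_{0,S\setminus\{\alpha_r\}}$ sends each simple root of $S\setminus\{\alpha_r\}$ to the negative of a simple root of $S\setminus\{\alpha_r\}$, and sends $\alpha_r$ to $\alpha_0$ by Lemma \ref{lem 2.3}; applying $w_0$ (which sends each simple root to the negative of a simple root) shows that $w^{-1}$ maps $S\setminus\{\alpha_r\}$ bijectively onto the set of simple roots $S\setminus\{\alpha_{\sigma(r)}\}$ and maps $\alpha_r$ to $w_0(\alpha_0)=-\alpha_0$. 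Equivalently: for a simple root $\alpha$, $w(\alpha)>0$ iff $\alpha\neq\alpha_{\sigma(r)}$, and then $w(\alpha)$ is again a simple root (in $S\setminus\{\alpha_r\}$). Finally, $w\in W^I$ forces $w(\alpha)>0$ for all $\alpha\in I$, so $I\subseteq S\setminus\{\alpha_{\sigma(r)}\}$; if $I\neq\emptyset$, pick $\alpha\in I$, so $w(\alpha)$ is a simple root, and $w(\alpha)\neq\alpha_r$ because $w^{-1}(\alpha_r)=-\alpha_0<0$. Since $w^{-1}(w(\alpha))=\alpha\in R^+(Q)$, this forces $w(\alpha)\in J$, contradicting $J=\{\alpha_r\}$. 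Hence $I=\emptyset$, that is $Q=B$.

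I expect the main obstacle to be the first step, i.e. converting the hypothesis $Aut^0(X_Q(w))=P_r$ into the statement that $H^0(w,\mathfrak g/\mathfrak q)$ has nonzero $(-\alpha_0)$-weight space, which is exactly what allows Theorem \ref{thm1}(2) to be invoked; once $w^{-1}(\alpha_0)<0$ is known, the minuscule hypothesis enters cleanly through Lemmas \ref{lem 2.3} and \ref{lem 2.4}, and the fact that $w^{-1}=w_0^{S\setminus\{\alpha_r\}}$ permutes the simple roots off $\alpha_r$ makes the final contradiction immediate.
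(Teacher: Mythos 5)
Your proof is correct and follows essentially the same route as the paper's: establish $w^{-1}\in W^{S\setminus\{\alpha_{r}\}}$ and $w^{-1}(\alpha_{0})<0$ via Theorem \ref{thm1}, identify $w^{-1}=w_{0}^{S\setminus\{\alpha_{r}\}}$ by Lemma \ref{lem 2.4}, and contradict $Q\neq B$ by showing that $w$ would then conjugate a simple reflection of $W_{I}$ into the Levi of $Q$, forcing the stabilizer to be strictly larger than $P_{r}$. The only real difference is that you re-run the $(-\alpha_{0})$-weight-space argument from the proof of Theorem \ref{thm1}(2) to first pin down $P_{w}=P_{r}$, whereas the paper treats the equality of the stabilizer with $P_{r}$ as immediate from the hypothesis and applies Theorem \ref{thm1}(2) directly; your extra care here is harmless and arguably tightens the exposition.
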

	\begin{proof} Assume that $P_{r}=Aut^0(X_{Q}(w))$ where $X_{Q}(w)$ is a Schubert variety in $G/Q,$ for some $Q\neq B.$ Then there exists a non-empty subset $J$ of $S$ such that $Q=P_{J}.$ Since $P_{r}=Aut^0(X_{Q}(w)),$ we have $w^{-1}(\alpha_{j})>0$ for all $j\neq r,$ and we have $w^{-1}\in W^{S \setminus \{\alpha_{r}\}}.$ Since the action $P_{r}$ on $X_{Q}(w)$ is faithful, by Theorem \ref{thm1} we have $w^{-1}(\alpha_{0})<0.$ Further, since $w^{-1}(\alpha_{0})<0$ and $w^{-1}\in W^{S \setminus \{\alpha_{r}\}},$ by Lemma \ref{lem 2.4} we have $w^{-1}=w_{0}^{S \setminus \{\alpha_{r}\}}.$ Thus we have $w=({w_{0}^{S \setminus \{\alpha_{r}\}}})^{-1}.$ Note that $(w_{0}^{S\setminus\{\alpha_{r}\}})^{-1}=w_{0,S\setminus\{\alpha_{r}\}}w_{0}=w_{0}(w_{0}w_{0,S\setminus\{\alpha_{r}\}}w_{0})=w_{0}w_{0,S\setminus\{\alpha_{\sigma(r)}\}}.$ Therefore, we have $w={w_{0}^{S \setminus \{\alpha_{\sigma(r)}\}}}.$ Hence, we have $\sigma(J)\subset S\setminus \{\alpha_{r}\}.$ Let $\tau$ be the Dynkin diagram automorphism of $S\setminus \{\alpha_{r}\}$ induced by  $-w_{0, S \setminus \{\alpha_{r}\}}.$ Now since $J$ is nonempty, there exists $\alpha_{i} \in J$ such that $\alpha_{\sigma(i)}\neq \alpha_{r}.$ So there exists $\alpha_{j}\in S\setminus \{\alpha_{r}\}$ such that $\alpha_{\tau(j)}=\alpha_{\sigma(i)}.$ That is we have $\alpha_{\sigma\tau(j)}=\alpha_{i}.$ Now we have $w^{-1}s_{j}w=w_{0}w_{0, S\setminus \{\alpha_{r}\}}s_{j}w_{0, S\setminus \{\alpha_{r}\}}w_{0}=s_{\sigma\tau(j)}=s_{i}.$ This is a contradiction to the fact that $P_{r}$ is the stabilizer of $X_{Q}(w)$ in $G.$ Hence, we have $Q=B.$
	\end{proof}
	
	\begin{thm}\label{theorem 10.1}
		A fundamental weight $\omega_{\alpha}$ is  minuscule if and only if for every parabolic subgroup $Q$ containing $B$ properly, there is no Schubert variety $X_{Q}(w)$ in $G/Q$ such that $P_{\alpha}=Aut^0(X_{Q}(w)).$
	\end{thm}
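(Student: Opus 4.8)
The plan is to establish the two implications of the biconditional separately, drawing on Theorem \ref{theorem 4.1} for one direction and on the type-by-type constructions of Sections 5--8 for the other. So suppose first that $\omega_{\alpha}$ is minuscule, and suppose for contradiction that there is a parabolic subgroup $Q$ containing $B$ properly together with a Schubert variety $X_{Q}(w)$ in $G/Q$ such that $P_{\alpha}=Aut^{0}(X_{Q}(w))$. Then $\omega_{\alpha}$ being minuscule and the existence of such an $X_{Q}(w)$ together trigger the conclusion of Theorem \ref{theorem 4.1}, namely $Q=B$, contradicting $Q\supsetneq B$. Hence no such $Q$ exists, which is the forward implication. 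This direction needs no further computation; it is precisely the contrapositive of Theorem \ref{theorem 4.1}.

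For the converse, assume $\omega_{\alpha}$ is \emph{not} minuscule; I must exhibit a proper parabolic $Q$ and a Schubert variety $X_{Q}(w)$ in $G/Q$ with $P_{\alpha}=Aut^{0}(X_{Q}(w))$. Since $G$ is simple, simply-laced and of adjoint type, it is of type $A_{n}$, $D_{n}$, $E_{6}$, $E_{7}$ or $E_{8}$, and I would argue by cases. In type $A_{n}$ the list in Table 1 (equivalently Lemma \ref{lem 3.2}) shows that every fundamental weight is minuscule, so there is no non-minuscule $\omega_{\alpha}$ and this case is vacuous. In type $D_{n}$ the non-minuscule weights are exactly $\omega_{i}$ with $2\le i\le n-2$, and Proposition \ref{Prop 6.1} produces a Schubert variety $X_{P_{i}}(w_{i})$ in $G/P_{i}$ with $P_{i}=Aut^{0}(X_{P_{i}}(w_{i}))$; here $Q=P_{i}$ contains $B$ properly. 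In the same way Proposition \ref{proposition 5.4} handles type $E_{6}$ (with $Q=P_{4}$), Proposition \ref{Proposition 5.7} handles type $E_{7}$ (with $Q=P_{3}$), and Proposition \ref{proposition 5.12} handles type $E_{8}$ (with $Q=P_{7}$), covering every non-minuscule fundamental weight in the respective type. Assembling these gives the converse implication, and the theorem follows.

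The theorem itself is therefore essentially a synthesis statement: the genuine content sits in the four Propositions it invokes. In each of those the real work is to write down an explicit element $w_{i}\in W$, to check that $w_{i}^{-1}(\alpha_{j})>0$ for $j\neq i$ and $w_{i}^{-1}(\alpha_{i})<0$ (so that $P_{i}$ is exactly the stabilizer of $X_{Q}(w_{i})$ in $G$, not a larger parabolic), and to check that $w_{i}^{-1}(\alpha_{0})<0$ (so that Theorem \ref{thm1}(2) upgrades the stabilizer map into an isomorphism $P_{i}\cong Aut^{0}(X_{Q}(w_{i}))$); one also verifies $w_{i}(\alpha_{\bullet})$ is a non-simple positive root to see that $w_{i}\in W^{Q}$ is not the identity coset representative obstruction. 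The main obstacle, accordingly, is not in proving the present theorem but in those Weyl-group computations; granting them, the only thing to be careful about here is that each constructed $Q$ ($P_{i}$, $P_{4}$, $P_{3}$, $P_{7}$) indeed properly contains $B$, which is clear since each is a minimal or larger standard parabolic. With Theorem \ref{theorem 4.1} supplying the minuscule direction and Propositions \ref{Prop 6.1}, \ref{proposition 5.4}, \ref{Proposition 5.7}, \ref{proposition 5.12} supplying the non-minuscule direction, the proof is complete.
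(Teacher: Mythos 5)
Your proof is correct and follows essentially the same route as the paper, whose own proof of Theorem \ref{theorem 10.1} is a one-line citation of Theorem \ref{theorem 4.1} for the minuscule direction and Propositions \ref{Prop 6.1}, \ref{proposition 5.4}, \ref{Proposition 5.7}, \ref{proposition 5.12} for the non-minuscule direction. Your additional remarks --- the vacuousness of type $A_{n}$ and the role of the conditions $w_{i}^{-1}(\alpha_{i})<0$, $w_{i}^{-1}(\alpha_{j})>0$, $w_{i}^{-1}(\alpha_{0})<0$ inside those propositions --- accurately reflect how the cited results are actually used.
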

	\begin{proof}
		Follows from Proposition \ref{Prop 6.1},  Proposition \ref{proposition 5.4}, Proposition \ref{Proposition 5.7}, Proposition \ref{proposition 5.12}, and  Theorem \ref{theorem 4.1}.
	\end{proof}
	
In the following remark we label the simple roots for $B_{2}$ as in \cite[p.58]{Hum1}, namely $\langle \alpha_{1},\alpha_{2} \rangle=-2,$ and $\langle \alpha_{2},\alpha_{1}\rangle=-1.$ 
	\begin{rem}\label{rmk1}
		In non-simply-laced type, Theorem \ref{theorem 10.1} may not hold.
		For instance consider $G=SO(5,\mathbb{C})$ and $w=s_{2}s_{1}.$ Here, $\omega_{2}$ is the unique minuscule weight. 	Then, $X_{P_{2}}(w)$ is a Schubert variety in $G/P_{2}$ such that $P_{2}=Aut^0(X_{P_{2}}(w)).$ 
	\end{rem}
	
\begin{proof}
	
Note that the tangent bundle of $G/P_2$ is $\mathcal{L}(\mathfrak{g/p_{2}}).$ Further, $\mathfrak{g/p_{2}}$ has a filtration of $B$-submodules with successive quotients are of the form $\mathbb{C}_{\beta}$ for some $\beta\in \{\alpha_{1},\alpha_{1}+\alpha_{2}, \alpha_{1}+2\alpha_{2}\}.$ Then we use Lemma \ref{lem2.3}, Lemma \ref{lem2.4}, and Lemma \ref{lem 2.1}, to compute the following cohomology modules: $H^0(s_{1}, \mathfrak{g/p_{2}})=\mathbb{C}_{\alpha_{1}}\oplus \mathbb{C}h(\alpha_{1})\oplus \mathbb{C}_{-\alpha_{1}}\oplus \mathbb{C}_{\alpha_{1}+\alpha_{2}}\oplus \mathbb{C}_{\alpha_{2}}\oplus \mathbb{C}_{\alpha_{1}+2\alpha_{2}}$ and $H^0(w,\mathfrak{g/p_{2}})=H^0(s_{2}s_{1},\mathfrak{g/p_{2}})=H^0(s_{2}, H^0(s_{1}, \mathfrak{g/p_{2}}))=\mathbb{C}_{\alpha_{1}}\oplus \mathbb{C}_{\alpha_{1}+\alpha_{2}}\oplus \mathbb{C}_{\alpha_{1}+2\alpha_{2}}\oplus \mathbb{C}h(\alpha_{1})\oplus \mathbb{C}_{-\alpha_{1}}\oplus \mathbb{C}_{-\alpha_{1}-\alpha_{2}}\oplus \mathbb{C}_{-\alpha_{1}-2\alpha_{2}}\oplus \mathbb{C}_{\alpha_{2}}\oplus \mathbb{C}h(\alpha_{2})\oplus \mathbb{C}_{-\alpha_{2}}\simeq \mathfrak{g}$ as  $T$-module. Note that $\mathbb{C}_{\alpha_{1}}$ is a $B$-submodule of $\mathfrak{g/p_{2}}.$ Thus, $H^0(w,\alpha_{1})$ is a $B$-submodule of $H^0(w,\mathfrak{g/p_{2}}).$

But, $H^0(w,\alpha_{1})_{-\alpha_{0}}\neq 0,$ as $s_{2}s_{1}(\alpha_{1})=-\alpha_{0}.$ Hence, $H^0(w, \mathfrak{g/p_{2}})_{-\alpha_{0}}\neq 0.$ Therefore, the restriction map $\varphi: H^0(G/P_2, \mathcal{L}(\mathfrak{g/p_{2}}))\longrightarrow H^0(w,\mathfrak{g/p_{2}})$ is injective. Furthermore, since $G$ acts on $G/P_2,$ we have $\mathfrak{g}\subseteq H^0(G/P_2,\mathcal{L}(\mathfrak{g/p_{2}})).$ Thus by the above discussion, $\varphi$ is an isomorphism and hence we have $H^0(G/P_2,\mathcal{L}(\mathfrak{g/p_{2}}))=\mathfrak{g}$ and $H^0(w,\mathfrak{g/p_{2}})=\mathfrak{g}$ as $B$-modules. On the other hand, $T_{X_{P_2}(w)}$ is a subsheaf of the restriction of the tangent bundle $T_{G/P_2}$ to $X_{P_2}(w).$ Hence, $H^0(X_{P_2}(w), T_{X_{P_2}(w)})$ is a $B$-submodule of $H^0(w,\mathfrak{g/p_{2}}).$ Further, since $H^0(w,\mathfrak{g/p_{2}})=\mathfrak{g},$ Lie$(Aut^0(X_{P_2}(w)))$ is a Lie subalgebra of $\mathfrak{g}$ containing $\mathfrak{b}.$ Thus $Aut^0(X_{P_2}(w))$ is a parabolic subgroup of $G$ containing $B.$ 
	
	Let $P'=Aut^0(X_{P_2}(w))\subseteq G.$ Let $\alpha$ be a simple root such that $s_{\alpha}\in W(P'),$ Weyl group of $P'.$ Let $n_{\alpha}\in N_{P'}(T)$ be a representative of $s_{\alpha}.$ Then $n_{\alpha}\cdot wP_{2}/P_{2}$ is a $T$-fixed point in $X_{P_2}(w).$ Further, for any dominant character $\chi$ of $P_{2},$ $T$ acts on the fiber of the $\mathcal{L}(\chi)$ over the point $n_{\alpha}\cdot wP_{2}/P_{2}$ by the character $s_{\alpha}w(\chi).$ Therefore, $n_{\alpha}\cdot wP_{2}/P_{2}=s_{\alpha}wP_{2}/P_{2}.$ Since $s_{\alpha}wP_{2}/P_{2}\in X_{P_{2}}(w),$ we have $s_{\alpha}w\in W^{\{\alpha_{2}\}}$ and $s_{\alpha}w< w$ or $s_{\alpha}wP_{2}=wP_{2}.$ Thus, $n_{\alpha} \text{~is in the stabilizer of ~} X_{P_2}(w)$ in $G.$ Since $P'$ is generated by $B$ and $n_{\alpha},$ where $\alpha\in S$ such that $s_{\alpha}\in W(P'),$ it follows that $P'\text{~is the stabilizer of~} X_{P_2}(w)$ in $G.$ Hence, $Aut^0(X_{P_2}(w))$ is equal to the stabilizer of $X_{P_2}(w)$ in $G.$ Now, the remark follows from the fact that stabilizer of $X_{P_2}(w)$ in $G$ is $P_2.$ 
	
We now describe $X_{P_2}(w)$ geometrically. Consider the Schubert variety $X(w)$ in $G/B$. Consider the open subsets
$BwB/B\subseteq X(w)$ and $BwP_{2}/P_2\subseteq X_{P_2}(w).$ 

Let $U=\prod\limits_{\alpha\in R^{+}(w^{-1})}U_{-\alpha}.$ The morphisms $U\longrightarrow BwB/B$ sending $u\mapsto uwB/B$ and $U\longrightarrow BwP_{2}/P_{2}$ sending $u\mapsto uwP_{2}/P_{2}$ are isomorphisms (see \cite[Section 2.1, (2) and (3), p.62]{BK}). Therefore, the class groups of $BwB/B$ and $BwP_{2}/P_{2}$ are trivial.

Further, $X_{P_2}(w)\setminus BwP_2/P_2=X_{P_2}(s_{1})$ is an irreducible divisor. Also, $X(w)\setminus BwB/B$ is union of two irreducible divisors $X(s_{1})$ and $X(s_{2}).$ By \cite[Proposition 6.5(c), p.133]{Har}, $rank(Cl(X(w))\le 2$ and $rank(Cl(X_{P_2}(w)))\le 1,$ where $Cl(X(w))$ (respectively, $Cl(X_{P_2}(w))$) denotes the class group of $X(w)$ (respectively, of $X_{P_2}(w)$). 

Hence, $rank(Pic(X(w))\le 2$ and $rank(Pic(X_{P_2}(w)))\le 1,$ where $Pic(X(w))$ (respectively, $Pic(X_{P_2}(w))$) denotes the Picard group of $X(w)$ (respectively, of $X_{P_2}(w)$). Further, the restriction of the line bundles $\mathcal{L}(\omega_{1})$ and $\mathcal{L}(\omega_{2})$ to $X(w)$ are linearly independent. Therefore, $rank(Pic(X(w)))=2$ and $rank(Pic(X_{P_2}(w)))=1.$  
 
Now, consider the restriction of the morphism $\pi: G/B\longrightarrow G/P_2$ to $X(w),$ which also we denote it by $\pi.$ So, $\pi: X(w)\longrightarrow X_{P_2}(w)$ is a surjective birational morphism such that $\pi_{*}\mathcal{O}_{X(w)}=\mathcal{O}_{X_{P_2}(w)}$ (see \cite[Theorem 3.3.4(a), p.96]{BK}). Therefore, by \cite[Corollary 2.2, p.45]{Bri2}, $\pi$ induces homomorphism $\pi_{*}:Aut^0(X(w))\longrightarrow Aut^0(X_{P_2}(w))$ of algebraic groups. On the other hand, since $w^{-1}(\alpha_{0})<0,$ by \cite[Theorem 6.6, p.781]{Kan} $P_{2}\subseteq Aut^0(X(w)).$  Therefore, $\pi_{*}: Aut^0(X(w))\longrightarrow Aut^0(X_{P_2}(w))=P_2$ is an isomorphism.
	
\end{proof}

	{\bf Acknowledgements} We thank the referee for useful comments and suggestions. The first named author thanks the Infosys Foundation for the partial financial support. He also thanks MATRICS for the partial financial support. The second named author thanks the Chennai Mathematical Institute for the hospitality during his stay.

\end{document}